\DeclareFontFamily{OT1}{rsfs}{}
\DeclareFontShape{OT1}{rsfs}{n}{it}{<-> rsfs10}{}
\DeclareMathAlphabet{\mathscr}{OT1}{rsfs}{n}{it}
\DeclareMathOperator{\Spec}{Spec}
\theoremstyle{plain}
  \newtheorem{theorem}[subsubsection]{Theorem}
  \newtheorem{proposition}[subsubsection]{Proposition}
  \newtheorem{lemma}[subsubsection]{Lemma}
  \newtheorem{corollary}[subsubsection]{Corollary}
\theoremstyle{definition}
  \newtheorem{definition}[subsubsection]{Definition}
\theoremstyle{remark}
  \newtheorem{example}[subsubsection]{Example}
  \newtheorem{remark}[subsubsection]{Remark}
\numberwithin{equation}{subsection}
\author{A. M. Masullo}
\title{Crystalline cohomology over general bases}
\begin{document}

\begin{abstract}
Building on ideas of Berthelot,
we develop a crystalline cohomology formalism over divided power rings $(A, I_0, \eta)$ for any ring $A$, 
allowing $\mathbf{Z}$-flat $A$. 
For a smooth $A$-scheme $Y$ and a closed
subscheme $X$ of $Y$ for which $\eta$ extends to $I_0 \mathscr{O}_X$, a (quasi-coherent) crystal $\mathscr{F}$
on $(X/A)_{\rm{cris}}$ is equivalent to a specific type of module with integrable $A$-linear
connection over a certain completion $D_{Y,\eta}(X)^{\wedge}$ (called ``pd-adic'') of the divided power envelope 
$D_{Y,\eta}(X)$ of $Y$ along $X$ (with divided power structure $\delta$)

Our main result, building on ideas of Bhatt and de Jong for $\mathbf{Z}/(p^e)$-schemes (where
pd-adic completion has no effect), 
is a natural isomorphism between ${\rm{R}}\Gamma((X/A)_{\rm{cris}}, \mathscr{F})$
and the Zariski hypercohomology of the pd-adically completed de Rham complex $\mathscr{F} \widehat{\otimes}
\widehat{\Omega}^*_{D_{Y,\eta}(X)^{\wedge}/A,\delta}$  arising
from the module with integrable connection over $D_{Y,\eta}(X)^{\wedge}$ associated to $\mathscr{F}$.
By a variant of the same methods, we obtain a representative of the complex
$\mathscr{F} \widehat{\otimes} \widehat{\Omega}^*_{D_{Y,\eta}(X)^{\wedge}/A,\delta}$
in the derived category of sheaves
of $A$-modules on $X$ in terms of a  \v{C}ech-Alexander construction.  

When $\mathscr{F}=\mathscr{O}_{X/A}$,  
our comparison theorem implies that in the derived category of sheaves of 
$A$-modules on $X$, the pd-adic completion of 
$\Omega^*_{D_{Y,\eta}(X)/A,\delta}$ functorially depends only on $X$. 
Over $\mathbf{Q}$-algebras $A$, so pd-adic completion becomes ideal-adic completion, this recovers a result of Hartshorne.

\end{abstract}

\maketitle

\tableofcontents

\section{Introduction}\label{intro}

\subsection{Motivation} 

In \cite[Appendice]{Ber}, Berthelot briefly sketched 
how to compare algebraic de Rham cohomology to crystalline cohomology when the base scheme is allowed to be arbitrary (not necessarily torsion).
An essential feature, as proposed in \cite[Appendice]{Ber}, is that the divided power thickenings $(B, J, \delta)$
corresponding to affine objects in the crystalline site are required to satisfy $m J^{[n]} = 0$ for some $m, n \ge 1$
(which may vary with $(B, J, \delta)$), or equivalently $N! J^{[N]} = 0$ for some $N \ge 1$.
Apart from some brief comments in \cite{faltings} where the site of such triples is called the ``extended site'', 
those ideas were not developed further (to the best of the author's knowledge), 
perhaps because for a long time the most striking applications of crystalline methods have been 
concentrated in $p^n$-torsion or $p$-adically complete settings.  

Recent activity in integral $p$-adic Hodge theory
(\cite{bms1}, \cite{bms2}, \cite{BS}), and the desire to understand the extent to which such methods can be adapted
to global situations without singling out a choice of preferred prime $p$, led us to find value in developing a crystalline theory over
general bases, with the expectation that such a theory will serve a useful role towards that goal.

We develop the formalism of crystalline cohomology over general base rings
(equipped with a divided power structure), avoiding the hypothesis that 
some power of a prime $p$ vanishes in the base ring. 
Our approach, inspired by \cite{BdJ}, 
enables us to obtain a comparison theorem with de Rham theory for the cohomology of a crystal over any
divided power ring (e.g., a ring equipped with the ideal $(0)$). When the crystal is the
structure sheaf, this recovers the comparison announced in \cite[Appendice]{Ber} with a different method of proof.

\subsection{Two completions}

Although $p$-adic completion
 at the sheaf-theoretic level as in \cite{BdJ} does not arise in our work, 
 the use of formal completion along closed subschemes in Hartshorne's work in \cite{hartshorne}
 on de Rham cohomology for singular varieties in characteristic 0 
 inspired us to try to adapt the $p$-adic completion techniques
 in \cite{BdJ} in a geometric manner.  The relevant tool for this purpose is a subtle geometric analogue of formal completion 
  introduced in \cite[Appendice]{Ber}: 
 completion along the descending system of ideals
 $n! J^{[n]}$, a process we call {\em pd-adic completion} (see Definition \ref{pdadicdef} and Remark \ref{pdadicrem}).

 It must be stressed that in the setting of $\mathbf{Z}/m\mathbf{Z}$-schemes
 (such as with $m = p^e$ for a prime $p$, as in \cite{BdJ}), pd-adic completion does nothing
 since $n! J^{[n]} = 0$ for all $n \ge m$.  So pd-adic completion is {\em irrelevant}
 in the crystalline theory over torsion rings; it is only perceived when allowing non-torsion base rings, as we do.
 Over $\mathbf{Q}$-algebras, pd-adic completion recovers the 
 familiar process of completion along a closed subscheme.  
For general divided power rings $B$, if $J = f B$ is principal then pd-adic completion along $J$ also 
 coincides with $J$-adic completion since $n! (fB)^{[n]} = f^n B$. However, (locally) principal $J$ are too special for
 our needs:  for a closed subsceme $X$ of a smooth scheme $Y$ over a divided power ring $(A, I_0, \eta)$ we'll 
 need to use pd-adic completion along the canonical divided power ideal
for the divided power envelope $D_{Y^{\nu+1},\eta}(X)$ of the fiber power $Y^{\nu+1}$ along the diagonally embedded $X$
 with all $\nu$, and those ideals are usually not locally principal when $\nu > 0$
 (even if $I_0$ is principal and $X = Y \bmod I_0$). 
 
 So although the way we use pd-adic completion is inspired by
 the role of $p$-adic completions in the theory over torsion rings as in \cite{BdJ}, it is logically
 unrelated.  Moreover, when working over a $\mathbf{Z}$-flat ring 
that isn't a $\mathbf{Q}$-algebra, the system of ideals $n!J^{[n]}$ is
 algebraically more complicated than completion along powers of an ideal when $J$ isn't locally principal.  
  
 \subsection{Results}

Our results in the affine setting, given in \S\ref{infpd}--\S\ref{affzarsite},
include an equivalence (in Proposition \ref{micequiv}) over any divided power ring
$(A, I_0, \eta)$, between:
\begin{itemize}
\item[(i)] the category of crystals of quasi-coherent modules
(abbreviated to ``quasi-coherent crystals'') on a closed subscheme $\Spec(R)$ 
of a smooth affine $A$-scheme $\Spec(P)$ admitting \'etale coordinates over $A$,
\item[(ii)] a specific category of topological 
modules (over the pd-adic completion of the divided power envelope of $\Spec(P)$ along
$\Spec(R)$ respecting $\eta$) equipped with an integrable connection satisfying certain
divisibility conditions over $\mathbf{Z}$. 
\end{itemize}
The divisibility conditions in (ii) are a globalization of a ``topologically quasi-nilpotent'' condition on connections 
arising from crystals in the $p$-adic theory.  In the special case 
$P=R$ (as we can use when $R$ is $A$-smooth and admits \'etale coordinates), such crystals 
thereby amount to $R$-modules equipped 
with an integrable connection satisfying specific divisibility conditions (automatic when $A$ is a $\mathbf{Q}$-algebra).

\begin{remark}
By work of Katz (see \S\ref{MICsec}, and in particular Example \ref{katzex}), a more refined and explicit version of 
the divisibility conditions on connections arising from crystals
 is satisfied over a dense open in the base for the Gauss--Manin connection on each relative de Rham cohomology sheaf
${\rm{R}}^i f_*(\Omega^*_{X/S})$ for a smooth proper morphism $f:X \to S$ with $S$ flat over $\mathbf{Z}$.
\end{remark}

Consider a closed subscheme $X$ of a smooth scheme $Y$
(e.g, a quasi-projective scheme $X$) over a general base ring $A$ equipped with a divided power structure
$(I_0, \eta)$ such that $\eta$ extends to $I_0 \mathscr{O}_X$, as is automatic when $I_0$ is locally
principal or $X$ is $A$-flat or $I_0 \mathscr{O}_X = 0$ (i.e., $X$ is an $A/I_0$-scheme).  We prove 
a global comparison isomorphism (in Theorem \ref{hartshornethm}) computing the crystalline cohomology of a 
quasi-coherent crystal $\mathscr{F}$ on $X$ relative to $(A, I_0, \eta)$ as Zariski hypercohomology
of a pd-adically completed de Rham complex 
$\mathscr{F} \widehat{\otimes} \widehat{\Omega}^*_{D_{Y,\eta}(X)^{\wedge}/A,\delta}$ (where $\delta$ is the divided power
structure).
By replacing $X$ with varying opens $U \subset X$, this ``sheafifies'' to an isomorphism
\begin{equation}\label{Ruisom}
{\rm{R}}u_{X/A,\ast}(\mathscr{F}) \simeq 
\mathscr{F} \widehat{\otimes} \widehat{\Omega}^*_{D_{Y,\eta}(X)^{\wedge}/A,\delta}
\end{equation} 
in the derived $\infty$-category of sheaves of $A$-modules on $X$ (see Proposition \ref{localmain}).  

The isomorphism (\ref{Ruisom}) shows the right side depends functorially only on $X$;
for $\mathbf{Q}$-algebras $A$ and $\mathscr{F}=\mathscr{O}_{X/A}$ this recovers
Hartshorne's result in (the proof of) \cite[Ch.\,II, Thm.\,1.4]{hartshorne}
that in the derived category of sheaves of $A$-modules on $X$, the formal completion
of $\Omega^*_{Y/A}$ along $X$ is independent of $Y$. 
We now have the same conclusion over any ring $A$ (the choice $I_0 = 0$ is permitted) by using pd-adic completion
as the correct generalization of usual completions over $\mathbf{Q}$-algebras.
A sheafified version of the method of proof of our global comparison isomorphism
yields yet another description of the right side of (\ref{Ruisom}) avoiding the mention of
differential forms, using a \v{C}ech-Alexander construction (which however involves $Y$); see Remark \ref{Fbullet}.  

In the case of $A$-smooth $X$, using $Y=X$ gives a site-theoretic interpretation of de Rham hypercohomology
for smooth $A$-schemes (announced in \cite[Appendice]{Ber}, recovering 
what Grothendieck did over $\mathbf{Q}$-algebras \cite[Thm.\,4.1]{dix})
as an instance of a comparison result with coefficients in a 
quasi-coherent crystal.  We emphasize that even though this comparison
theorem for $A$-smooth $X$ using the choice $Y=X$ doesn't involve pd-adic completions
in its statement, such completions are an essential ingredient in the proof
because (after reducing to the affine setting) pd-adic completions are used 
along the canonical ideal for $D_{Y^{\nu+1},\eta}(X)$ with all $\nu$, 
and those ideals are generally not locally principal for $\nu > 0$ even when $Y=X$. 

In the special case that $A = W$ is a complete discrete valuation ring with mixed characteristic $(0,p)$
and uniformizer $p$, for a smooth proper $W$-scheme  $Y$ with special fiber $Y_0$
we have $D_Y(Y_0) = Y$ and the pd-adic topology on
this divided power envelope is the $p$-adic topology.  
Our global comparison theorem in this context yields 
${\rm{H}}^i((Y_0/W)_{\rm{cris}}, \mathscr{O}_{Y_0/W}) \simeq {\rm{H}}^i_{\rm{dR}}(\widehat{Y}/W)$
without passing through crystalline cohomology 
over each $W_n = W/(p^n)$; see Remark \ref{dRcomp}. Here too, the statement does not mention pd-adic
completion along non-principal ideals but they are present in the proof via the ideals for $Y_0$
diagonally embedded in the higher fiber powers
of $Y$ over $W$. 

\subsection*{Acknowledgements}  I am grateful to the Stanford University Math department, the American Institute of Mathematics, and Ashok Vaish for financial support during the work on this paper, and to Brian Conrad for helpful discussions and expository advice.

\section{Infinitesimal divided power envelopes and associated de Rham complexes}\label{infpd}

\subsection{Initial algebraic constructions} 

Let $(A_0, I_0, \eta)$ be a divided power ring.  
Let $A$ be an $A_0$-algebra and $I$ an ideal in $A$; we don't assume
$I_0 A \subset I$.  Recall from the theory of divided power structures on ideals in rings
(see \cite[Ch.\,I]{Ber}, \cite[Ch.\,3]{bogus}, \cite[Tags 09PD, 07H7]{SP}, 
for example) that there is an initial map of pairs $(A,I) \to (D_{A,\eta}(I), I_D)$ for which $I_D$ is equipped with
divided powers compatible with $\eta$.   
We call $D_{A,\eta}(I)$ the {\em divided power envelope} of $A$ with respect to $I$ compatible with $\eta$, 
and $I_D$ is generated
by the divided powers of the images of the elements of $I$. 

For $n \ge 1$, let  $J^{[n]}$ denote the {\em $n$th divided power} of an ideal $J$ equipped with a divided power structure $\delta$
(i.e.,  $J^{[n]}$ is generated by products $\delta_{i_1}(x_1) \cdots \delta_{i_k}(x_k)$ for $x_1, \dots, x_k \in J$
and integers $i_1, \dots, i_k \ge 0$ satisfying $\sum_j i_j \ge n$).  
If $m J^{[n]} = 0$ with $m, n \ge 1$ then $N! J^{[N]} = 0$ for $N = \max(m,n)$, so 
every $x \in J$ satisfies $x^N=0$. 

For $m, n \ge 1$, define
$$D^{m,n}_{A,\eta}(I) := D_{A,\eta}(I)/m I_D^{[n]};$$
this is initial among divided power algebras $(B, J, \delta)$
over $(A, I)$ compatible with $\eta$ such that $m J^{[n]} = 0$.  (The notation $D^{m,n}_A(I)$ in \cite[Ch.\,I, (3.3.1)]{Ber} is defined
by killing $m I_D^{[n+1]}$ and assuming $I_0 = 0$.)  Note that $D^{1,1}_{A,\eta}(I) = D_{A,\eta}(I)/I_D = A/I$ when 
$\eta$ extends to $I_0 (A/I)$ (such as when $I_0$ is principal or $A/I$ is $A_0$-flat or $I_0(A/I)=0$); 
see \cite[3.15, 3.20(4)]{bogus}. 

For any $(A_0, I_0, \eta)$, 
$A_0 \langle t_1, \dots, t_k \rangle$ denotes the divided power envelope of $A_0[t_1,\dots, t_k]$ along $(t_1, \dots, t_k)$
compatible with $\eta$.  By \cite[3.20(6)]{bogus}, this agrees with the divided power envelope
for the case $I_0 = 0$.  For later use, we record the following key calculation. 

\begin{proposition}\label{dqt}
Let $(R, J, \delta)$ be a divided power ring, 
$\Spec(P) \to \Spec(R)$ a smooth map of affine schemes, and $\Spec(R) \hookrightarrow \Spec(P)$ a closed immersion over $R$ 
with corresponding ideal denoted $I$.  Let $I_D$ be the canonical ideal in $D_{P,\eta}(I)$ and 
$\langle t_1, \dots, t_d \rangle_+ \subset R\langle t_1, \dots, t_d \rangle$ be the divided power ideal pd-generated by the $t_j$'s. 

Suppose the locally free module $I/I^2$ over $P/I=R$ is free, 
and let $\{f_1, \dots, f_d\}$ be a collection of elements of $I$ that represent an $R$-basis of $I/I^2$.
Then for all $m, n \ge 1$ there is a unique isomorphism
of divided power algebras
$$R \langle t_1, \dots, t_d \rangle/m \langle t_1, \dots, t_d \rangle_+^{[n]} \simeq D_{P,\eta}^{m,n}(I)$$
over $(R, J, \delta)$ satisfying $t_i \mapsto f_i$ for all $i$.
\end{proposition}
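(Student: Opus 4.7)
The plan is to construct the natural map via universal properties and then verify it is an isomorphism by identifying both sides with the same representable functor.

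\textbf{Construction.} Since $R\langle t_1,\dots,t_d\rangle$ is the free PD-$R$-algebra (compatible with $\delta$) on $d$ generators in the PD-ideal, the universal property applied to $D_{P,\eta}(I)$---naturally a PD-$R$-algebra via the section $R\hookrightarrow P$ furnished by the closed immersion being over $R$, with distinguished elements $f_1,\dots,f_d$ in the PD-ideal $I_D$---produces a unique PD-$R$-algebra homomorphism $\tilde\phi: R\langle t\rangle\to D_{P,\eta}(I)$ sending $t_i\mapsto f_i$. Being a PD-morphism, $\tilde\phi$ carries $\langle t\rangle_+^{[n]}$ into $I_D^{[n]}$ and hence descends to the required $\phi$, whose uniqueness is inherited from that of $\tilde\phi$.

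\textbf{Matching functors.} To show $\phi$ is an isomorphism, I will identify both sides with the functor $F$ on $(R,J,\delta)$-PD-algebras $(B,K,\gamma)$ satisfying $mK^{[n]}=0$ given by $F(B,K,\gamma)=K^d$. The left side represents $F$ tautologically. On the right, $\operatorname{Hom}(D_{P,\eta}^{m,n}(I),B)$ is the set of $R$-algebra maps $\psi\colon P\to B$ with $\psi(I)\subseteq K$, so the problem reduces to showing that the evaluation map $\psi\mapsto(\psi(f_1),\dots,\psi(f_d))$ is a bijection onto $K^d$.

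\textbf{Smoothness input.} The hypothesis that $\{f_i\}$ is an $R$-basis of $I/I^2$ for the smooth map $R\to P$ with section implies, by comparing associated graded algebras (each a symmetric algebra on a free rank-$d$ $R$-module), that the $R$-algebra map $R[x_1,\dots,x_d]\to P$, $x_i\mapsto f_i$, induces isomorphisms $R[x]/(x)^N\simeq P/I^N$ for all $N\geq 1$. Equivalently, this map is \'etale Zariski-locally around $V(I)\subset\Spec(P)$, giving the formal smoothness input for lifting out of $P$.

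\textbf{Main obstacle.} Given $(g_i)\in K^d$, producing a unique $R$-algebra lift $\psi\colon P\to B$ with $f_i\mapsto g_i$ is the heart of the proof. The approach is a divided-power-adapted \'etale lifting along $R[x]\to P$: one inductively lifts the evident map $R[x]\to B$, $x_i\mapsto g_i$, from $B/K^{[j]}$ to $B/K^{[j+1]}$ for $j=1,\dots,n-1$ using classical formal smoothness (each $K^{[j]}/K^{[j+1]}$ is square-zero in $B/K^{[j+1]}$), arriving in $B/K^{[n]}$, and then extends to $B$ using the $m$-torsion relation $mK^{[n]}=0$ together with the divided power structure on $K$. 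Uniqueness is established in parallel: two lifts agreeing on $\{f_i\}$ agree modulo $K^{[2]}$ because $\{df_i\}$ generate $\Omega^1_{P/R}$ near $V(I)$, and one upgrades this through the PD-power filtration, with the terminal step handled by the $m$-torsion in $K^{[n]}$. The technical subtlety---and what I expect to be the main obstacle---is orchestrating this mixed induction (classical formal smoothness at each PD-graded step, then a final $m$-torsion extension) with the bookkeeping needed to keep everything simultaneously compatible with the PD-structure, the $R$-algebra structure, and the prescribed values on $\{f_i\}$.
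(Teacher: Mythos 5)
Your reduction is sound and attractive: by Yoneda the proposition is equivalent to the assertion that for every divided power $(R,J,\delta)$-algebra $(B,K,\gamma)$ with $mK^{[n]}=0$, the map $\psi\mapsto(\psi(f_1),\dots,\psi(f_d))$ from the set of $R$-algebra maps $\psi\colon P\to B$ with $\psi(I)\subseteq K$ to $K^d$ is bijective, and your smoothness input (the map $R[x_1,\dots,x_d]\to P$, $x_i\mapsto f_i$, inducing $R[x]/(x)^N\simeq P/I^N$, hence being \'etale near $V(I)$) is the right one. The gap is precisely at the step you flag, and it is not a technicality: by your own reduction the bijectivity \emph{is} the proposition, so its terminal step is the whole content. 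Your induction along the PD filtration produces compatible maps to $B/K^{[j]}$ for every $j$, but it never reaches $B$: the kernel $K^{[n]}$ of $B\to B/K^{[n]}$ is neither square-zero nor nilpotent (it is merely killed by $m$; e.g.\ in $B=R\langle t_1,\dots,t_d\rangle/m\langle t\rangle_+^{[n]}$ itself one has $K^{[N]}\neq 0$ for every $N$ unless $m$ is a unit in $R$), and $B$ is not complete along the ideals $K^{[j]}$, so neither classical formal smoothness nor the $m$-torsion relation extends a map across $B\to B/K^{[n]}$. The same problem kills the uniqueness sketch: upgrading through the filtration only shows that two lifts agree modulo $\bigcap_N K^{[N]}$, which need not vanish, and "$mK^{[n]}=0$ plus the PD structure" gives no way to conclude from there.

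The repair is to abandon the PD filtration at this point and use instead that every element of $K$ is nilpotent ($x^{\max(m,n)}=0$) together with finite presentation. First localize at some $g\in 1+I$ so that the $f_i$ generate $IP_g$ and $R[x]\to P_g$ is \'etale; maps $P\to B$ carrying $I$ into $K$ are the same as such maps out of $P_g$, since $\psi(g)\in 1+K\subseteq B^{\times}$ (this localization bookkeeping is absent from your sketch, and it is exactly where the paper spends the second half of its proof, via Zariskification at $1+I$). Then, presenting $P_g$ as a finitely presented $R[x]$-algebra, the finitely many obstruction elements for lifting the map $P_g\to B/K$ land in a finitely generated subideal of $K$, which is nilpotent, so ordinary nilpotent (square-zero, iterated) lifting along the formally \'etale map $R[x]\to P_g$ gives existence, and unramifiedness (e.g.\ the diagonal idempotent of $P_g\otimes_{R[x]}P_g$, which must map to $1$ because idempotents congruent to $1$ modulo a nil ideal equal $1$) gives uniqueness; equivalently, $(B,K)$ is a henselian pair because $K$ is nil. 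With that step supplied your argument closes and is genuinely different from the paper's, which instead reduces to the case where the $f_j$ generate $I$ and then runs the filtered/graded comparison of Berthelot's Prop.\,3.5.1 using the section $P/I^N\to R$ and ${\rm{Sym}}_R(I/I^2)\simeq \oplus_r I^r/I^{r+1}$.
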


\begin{remark}\label{D0}
By \cite[3.20(6)]{bogus}, we have naturally $D_{P,0}(I) = D_{P,\eta}(I)$ since $P/I = R$.
\end{remark}

\begin{remark}
If $m \cdot 1_R = 0$ then this result says 
$R \langle t_1, \dots, t_d \rangle \simeq D_P(I)$. The crystalline site defined later over a general divided power ring, 
which imposes the condition on divided power thickenings of affine objects that the divided power ideal $K$ satisfies $mK^{[n]}=0$ for some $m, n \ge 1$,
coincides with the existing notion over $\mathbf{Z}/(p^e)$-algebras respecting the usual divided power
structure on $(p)$.  Another crucial place
where we will use ``$m K^{[n]} =0$'' on affine objects in the crystalline site will
be to define the (integrable) connection associated to a quasi-coherent crystal (see Remark \ref{B(1)pd}).
\end{remark}

\begin{proof}
By Remark \ref{D0}, we may and do assume $I_0 = 0$. 
We first treat the case when the $f_j$'s generate $I$, and then we reduce the general case to that
case via a localization argument.
The smoothness of $P$ over $R$ implies two properties:  (i) each quotient map $P/I^N \to R$ admits an $R$-algebra section (by the infinitesimal criterion), 
(ii) ${\rm{Sym}}_R(I/I^2) \to {\rm{Gr}}_I(P) := \oplus_{r \ge 0} I^r/I^{r+1}$ is an isomorphism (by the existence Zariski-locally near $V(I)$ of
a regular sequence generating $I$; see \cite[Ch.\,IV, Cor.\,2.4]{fl} and bootstrap from the case of noetherian $R$
via \cite[Cor.\,to Thm.\,22.5]{crt}).  These two properties are the hypotheses in
\cite[Ch.\,I, Prop.\,3.5.1]{Ber}, and since we are first assuming $I$ is generated by the elements $f_j$ that represent an $R$-basis of $I/I^2$,
the method of proof of \cite[Ch.\,I, Prop.\,3.5.1(i)]{Ber} provides the desired isomorphism. 

In general, there is an open neighborhood $U$ of $V(I)$ in $\Spec(P)$ such that
the $f_j$'s generate $I$ over $U$; we can shrink $U$ to be quasi-compact. 
To find an affine open inside $U$ containing $V(I)$, we first consider the 
``Zariskification'' \cite[Ch.\,0, \S7.3(b)]{fk} (i.e., localize at $1+I$). 
Namely, for $B := A_{1+I}$ it is easy to see that $1 + IB \subset B^{\times}$, so $IB$ belongs to the Jacobson
radical of $B$ and hence the only open subset of $\Spec(B)$ that contains $V(IB)$ is the entire space.
Thus, the preimage of $U$ under $\Spec(B) \to \Spec(A)$ is the entirety of $\Spec(B)$, so by quasi-compactness of $U$
and writing $B$ as a direct limit of $A$-algebras of the form $A_g$ for $g \in 1+I$ we get such a $g$ so that 
$U$ has full preimage under $\Spec(A_g) \to \Spec(A)$.  This says $\Spec(A_g) \subset U$ inside $\Spec(A)$,
and $V(I) \subset \Spec(A_g)$ since $g \in 1 + I$. 
Thus, due to how we chose $U$, clearly $IP_g$ is generated by the $f_j$'s.  Since the composite map $R \to P \to P_g$ is smooth, 
the settled special case when the $f_j$'s generate $I$ applies to $P_g \twoheadrightarrow P_g/IP_g = P/I = R$
(first equality since $g \equiv 1 \bmod I$).

It remains to show that the natural map $D_P^{m,n}(I) \to D_{P_g}^{m,n}(IP_g)$ is an isomorphism for all $m, n \ge 1$. 
Since $I$ is finitely generated (as $P$ is finitely presented over $R$), if it has $e$ generators 
then $I^{mne}$ has vanishing image in $D_P^{m,n}(I)$ (see the proof of \cite[Ch.\,I, Prop.\,3.1.4]{Ber}).  Hence,
for $\overline{P} = P/I^{mnd}$ and $\overline{I} = I/I^{mne}$, 
by universality considerations the natural map $D_P^{m,n}(I) \rightarrow D_{\overline{P}}^{m,n}(\overline{I})$ is an isomorphism.
This also applies $(P_g, IP_g)$ in place of $(P, I)$, so it remains to note that the natural map
$P/I^N \to  P_g/I^N P_g$ is an isomorphism for all $N \ge 1$ (since $g \equiv 1 \bmod I$).
\end{proof}

\begin{definition}\label{pdadicdef} Let $(A_0, I_0, \eta)$ be a divided
power ring, and $I$ an ideal in an $A_0$-algebra $A$. Using the decreasing sequence of ideals
$N! I_D^{[N]}$ in $D_{A,\eta}(I)$  as $N$ grows, for any $D_{A,\eta}(I)$-module $M$ the inverse limit
$$\widehat{M} := \varprojlim M/(N! I_D^{[N]})M$$
makes sense; we call it the {\em pd-adic completion} of $M$. 
\end{definition}

Note that when $A$ is a $\mathbf{Z}/(m)$-algebra for some $m \ge 1$
then $\widehat{M} = M$.  In particular, pd-adic completions have nothing to do with
$p$-adic completions that one generally encounters for $p$-adic crystalline cohomology.
However, the role of pd-adic completions later on will be somewhat analogous
to that of $p$-adic completions in the usual development of $p$-adic crystalline methods.

In our main work, we will be using a fixed base ring or scheme that is arbitrary. 
By taking the divided power base ring to be
$\Spec(W(k)/(p^e))$ for a perfect field $k$ of characteristic $p$ and any $e \ge 1$, 
equipped with the usual divided powers on $(p)/(p^e)$, we will recover 
constructions from traditional crystalline cohomology.  But to avoid any misunderstanding, we stress that the traditional process of forming
$p$-adic completions is a step of rather different nature, with no logical counterpart in what we will be doing.

\begin{remark}\label{pdadicrem}
Clearly $\widehat{M}$ is a module over the pd-adic completion $\widehat{D}_{A,\eta}(I) := \varprojlim D_{A,\eta}(I)/N! I_D^{[N]}$ of $D_{A,\eta}(I)$, 
and $\widehat{D}_{A,\eta}(I)$ has a natural divided power structure on $\widehat{I}_D := \varprojlim I_D/N! I_D^{[N]}$, 
but we do {\em not} claim that $\widehat{D}_{A,\eta}(I)$ is complete for the topology defined by the decreasing collection of ideals
$N! \widehat{I}_D^{[N]}$; that is, 
$$\widehat{D}_{A,\eta}(I) \to \varprojlim \widehat{D}_{A,\eta}(I)/N! \widehat{I}_D^{[N]}$$
may not be an isomorphism since it is unlikely that the natural surjective map $\widehat{D}_{A,\eta}(I)/N! \widehat{I}_D^{[N]} \to D_{A,\eta}(I)/N! I_D^{[N]}$
is injective (and it is unlikely that the natural surjective map $\widehat{M}/N! \widehat{I}_D^{[N]} \widehat{M} \to M/N! _D^{[N]}M$
is injective).  So for us, ``pd-adic completion'' is just terminology referring to $\widehat{D}_{A,\eta}(I)$ (or $\widehat{M}$ above more generally).

One difficulty is that (even if $I$ is finitely generated) $I_D$ is not finitely generated, and probably $\widehat{I}_D$ is larger than
$I_D \widehat{D}_{A,\eta}(I)$.  Fortunately this nuisance will not be a problem in what follows.  We will need
to use $\widehat{D}_{A,\eta}(I)$ when relating crystals to modules with an integrable (divided power) connection in \S\ref{connsec}, but that
will not require pd-adic completeness of $(\widehat{D}_{A,\eta}(I), \widehat{I}_D)$. 
\end{remark}

\subsection{The category ${\rm{Cris}}(R/A)$ and divided power differentials}

Let $(A, I_0, \eta)$ be a divided power ring, 
and $R$ an $A$-algebra such that $\eta$ extends to $I_0 R$.
This final hypothesis holds when $R$ is $A$-flat or $I_0$ is principal \cite[3.15]{bogus}
or $I_0 R = 0$ (i.e., $R$ is an $A/I_0$-algebra).  

\begin{definition} The category ${\rm{Cris}}(R/(A, I_0, \eta))$ 
(usually abbreviated to ${\rm{Cris}}(R/A)$) consists of divided power rings
$(B, I, \delta)$ over $A$ compatible with $\eta$ such that:
\begin{itemize}
\item[(i)] there is given a closed immersion $\Spec(R) \hookrightarrow \Spec(B)$ over $A$ cut out by the ideal $I$,
\item[(ii)] $m I^{[n]} = 0$ for some $m, n \ge 1$.
\end{itemize}
Morphisms of such objects are defined in the evident manner (over $A$).
\end{definition}

The key point in the preceding definition is that we require $m I^{[n]} = 0$ for some $m, n \ge 1$. 
Note that if $M \ge m$ and $N \ge n$ then $M! I^{[N]} \subset m I^{[n]} = 0$.  Hence, the requirement on $(B, I, \delta)$ that there exists 
$m, n \ge 1$ such that $m I^{[n]} = 0$ can be checked  Zariski-locally on ${\rm{Spec}}(B)$. 
For $N = \max(m, n)$ we
have $N! I^{[N]} = 0$, so $x^N = 0$ for all $x \in I$ and hence $\Spec(B)$ has the same underlying topological space as $\Spec(R)$.

Now assume $R$ is finitely generated over $A$. We'll make many objects in
${\rm{Cris}}(R/A)$.  Choose 
a surjection $\pi:P \to R$ from a smooth $A$-algebra $P$ (e.g., a polynomial ring over $A$).
For $J = \ker(\pi)$, if $m, n \ge 1$ then $(B, I) := (D_{P,\eta}(J)/m J_D^{[n]}, J_D/m J_D^{[n]})$ is a divided power $A$-algebra 
such that 
$m I^{[n]} = 0$.  Since $\eta$ extends to $I_0 R$ by hypothesis, we
have $D_{P,\eta}(J)/J_D \simeq P/J = R$ by \cite[3.20(4)]{bogus}. 

These objects in ${\rm{Cris}}(R/A)$ arising from such smooth $P$ will be useful in what follows. 
(We could consider polynomial rings in a possibly infinite set of variables to avoid assuming $R$ is of finite type,
but prefer to focus on $R$ of finite type over $A$. Such large polynomial rings play a role in
developing a derived version of the theory presented here; such a theory is not discussed in this
document.)

\begin{remark}\label{crismap} 
Consider any $(B, I, \delta) \in {\rm{Cris}}(R/A)$, so
$m I^{[n]} = 0$ for some $m, n \ge 1$.  Thus, for $N = \max(m, n)$
we have $N!I^{[N]} = 0$, so
$x^N = 0$ for all $x \in I$.  Let $\pi: P \twoheadrightarrow R$ be an $A$-algebra
surjection from a smooth $A$-algebra $P$ (such as a polynomial ring), with $J = \ker \pi$.  
We claim that there exists a map of $A$-algebras $f:P \to B$ respecting their surjections to $R$,
so this map carries $J$ into $I$ and hence uniquely factors through a map of divided power
$A$-algebras $(D_{P,\eta}(J), J_D) \to (B, I)$.  Since $N! I^{[N]} = 0$, it follows that
we get a map of divided power $A$-algebras
$$(D_{P,\eta}(J)/N! J_D^{[N]}, J_D/N! J_D^{[N]}) \to (B,I);$$
note that the source is an object in ${\rm{Cris}}(R/A)$. 

To build $f$, the idea is that since $P$ is $A$-smooth and the ideal $I$ of
$\Spec(R) \hookrightarrow \Spec(B)$ has all elements with vanishing $N$th power,
a noetherian approximation argument should yield the existence of $f$. 
However, usually $I$ is not finitely generated (e.g., for examples arising from quotients of divided power envelopes), so 
there may be no power $I^e$ which vanishes, and hence we can't directly apply the usual infinitesimal smoothness criterion.  We
circumvent that difficulty via noetherian approximation,
as follows. Let $A_0 \subset A$ be a large enough finitely generated $\mathbf{Z}$-subalgebra
so that there exists a smooth $A_0$-algebra $P_0$ for which $A \otimes_{A_0} P_0 = P$. Let $R_0 \subset R$ be the image of
$P_0 \to P \twoheadrightarrow R$ and let $B_0 \subset B$ be a finitely generated $\mathbf{Z}$-subalgebra mapping
onto $R_0$, so $B_0 \to R_0$ has kernel $I_0$ of finite type. Since $I_0 \subset I$, we have $I_0^e = 0$ for some large $e$.
Then $P_0 \to R_0$ over $A_0$ lifts to an $A_0$-algebra map $P_0 \to B_0$, so the composite map $P_0 \to B_0 \to B$
yields an $A$-algebra map $f:P = A \otimes_{A_0} P_0 \to B$ respecting surjections to $R$ as desired.
\end{remark}

For the moment, we do not endow ${\rm{Cris}}(R/A)$ with a Grothendieck topology; we consider it just as a category for now (not yet as a site). 
The functor $\mathscr{O}_{R/A}^{\rm{cris}}$ on ${\rm{Cris}}(R/A)$ is defined by
$$\mathscr{O}_{R/A}^{\rm{cris}}(B,I, \delta) = B.$$
For the ideal $J := \ker(B \otimes_A B \twoheadrightarrow B \twoheadrightarrow R)$,
the quotient map $B \otimes_A B \to B$ carries $J$ onto $I$, so we get a canonical surjective map of divided power $A$-algebras 
$$B(1) := D_{B \otimes_A B}(J) \to B.$$  Hence, its kernel $I(1)$ is equipped with divided powers arising from those on $J_D$, so
we can define the $\mathscr{O}_{R/S}^{\rm{cris}}$-module $\Omega^1_{R/A, {\rm{cris}}}$
via
$$\Omega^1_{R/A, {\rm{cris}}}(B,I,\delta) = I(1)/I(1)^{[2]} =: \Omega^1_{B/A, \delta}$$
(keep in mind that we require $\delta$ to be compatible with $\eta$ on $I_0 \subset A$). 
This is an $\mathscr{O}_{R/A}^{\rm{cris}}$-module since $I(1)^2 \subset I(1)^{[2]}$ and $I(1)/I(1)^2$ is a module over $B(1)/I(1) = B$.

\begin{remark}
The notation $\Omega^*_{B/A,\delta}$ is justified because
the natural map ${\rm{d}}:B \to \Omega^1_{B/A, \delta}$ defined by $b \mapsto b \otimes 1 - 1 \otimes b \bmod I(1)^{[2]}$ 
is initial among $A$-linear divided power derivations from $B$ into a $B$-module: see \cite[Tag 07IW]{SP}
(and \cite[Tag 07J1]{SP}).  
\end{remark}

\begin{remark}\label{B(1)pd}
By definition of ${\rm{Cris}}(R/A)$, for every object $(B, I, \delta)$ there exists some $m, n \ge 1$ such that $m I^{[n]} = 0$.  But 
there is no reason for the divided power $A$-algebra $(B(1), J_D)$ to belong to ${\rm{Cris}}(R/A)$
since there are no evident integers $M, N \ge 1$ for which $M J_D^{[N]} = 0$ in $B(1)$.  
However, for the divided power algebra $(B(1)/I(1)^{[2]}, J_D/I(1)^{[2]})$ compatible with
$\eta$ on $I_0 \subset A$, we'll now find suitable such integers
(and the argument will show that we cannot generally take $M=1$ if $m=1$, so allowing general $m \ge 1$ in the definition
of ${\rm{Cris}}(R/A)$ is crucial for what follows). 

Since $J_D/I(1) = J$ via $B(1)/I(1) \simeq B$, we have $m J_D^{[n]} \subset I(1)$
since $m I^{[n]} = 0$.  Hence, in the quotient $B(1)/I(1)^{[2]}$ where the image of $I(1)^2$ vanishes, 
for all $x \in J$ the square of $m x^{[n]}$ vanishes, so $(2 m^2) (x^{[n]})^{[2]} = (m x^{[n]})^2$ has vanishing 
image.  

Since $(x^{[n]})^{[2]} = a_n x^{[2n]}$ for an integer $a_n$ depending only on $n$, 
we are reduced to showing that if $(\overline{B}, \overline{I})$ is a divided power ring
and there exist integers $m, n \ge 1$ such that $m x^{[n]} = 0$ for all $x \in \overline{I}$
then $M \overline{I}^{[N]} = 0$ for some $M, N \ge 1$.  
Consider an element
$$\xi := x_1^{[i_1]} \cdots x_r^{[i_r]} \in \overline{I}^{[n]}$$
with $\sum i_j \ge n$ and all $i_j \ge 1$.
If some $i_j \ge n$ then $m \xi = 0$ since $m x_j^{[i_j]} \in m \overline{I}^{[n]} = 0$.
If there are distinct $j, j'$ such that $n/2 \le i_j, i_{j'} < n$ then 
$m \xi = 0$ since $m x_j^{[i_j]} x_{j'}^{[i_{j'}]} \in m \overline{I}^{[n]} = 0$.
In general, if $1 \le k < \log_2(n)$ and there are distinct $j_1, \dots, j_{2^k}$ such that $n/2^k \le i_{j_1}, \dots, i_{j_{2^k}} < n/2^{k-1}$
then $m \xi = 0$.  

Hence, we can assume $i_j < n$ for all $j$,
and that for each $1 \le  k < \log_2(n)$ there are at most $2^k - 1$ values of $j$
such that $n/2^k \le i_j < n/2^{k-1}$.  But every positive integer less than $n$
lies in such an interval $[n/2^k, n/2^{k-1})$, so we are left in the situation where the number $r$ of such $j$'s is at most
$$\sum_{1 \le k < \log_2(n)} (2^k - 1) < 2(n-1).$$
In other words, either $m \xi = 0$ or else $r$ is bounded in terms of $n$ and every $i_j < n$.
But in the latter case there is an integer $C_n$ depending only on $n$ such that $C_n \xi$ is an integer multiple of
$x_1^{i_1} \cdots x_r^{i_r}$ with $r \le 2(n-1)$ and $\sum i_j <  2n^2 \log_2(n)$.  Thus, 
for $N \ge 2n^2 \log_2(n)$ such latter cases cannot arise when considering $\overline{I}^{[N]}$, so
$m \overline{I}^{[N]} = 0$.
\end{remark}

Note that both composite $A$-algebra maps $B \rightrightarrows B \otimes_A B \to B(1) := D_{B \otimes_A B,\eta}(J)$  are sections
to $B(1) \twoheadrightarrow B(1)/I(1) = B$, so this makes the $A$-module exact sequence
\begin{equation}\label{omega1B}
0 \to \Omega^1_{B/A, \delta} \to B(1)/I(1)^{[2]} \to B \to 0
\end{equation} 
an exact sequence of $B$-modules.

\medskip\medskip

\section{Crystals and connections}\label{connsec}

\subsection{Integrable connections and complexes}

In this section, we review some basic formalism relating crystals
to integrable connections.  It proceeds similarly to the standard $p$-adic crystalline theory.
When we discuss the equivalence of categories relating crystals to modules with a suitable integrable connection in \S\ref{MICsec}, 
the use of pd-adic completions  will involve some issues that do not arise with $p$-adic completions (see Remark \ref{pdadicrem}).

Let $(A, I_0, \eta)$ be a divided power ring and $R$ an $A$-algebra of finite type
such that $\eta$ extends to $I_0 R$ (e.g., $R$ is $A$-flat or $I_0$ is principal or $I_0 R = 0$). 

\begin{definition}\label{crystal} An $\mathscr{O}_{R/A}^{\rm{cris}}$-module $\mathscr{F}$
is a {\em crystal} if for every morphism $(B, I, \delta) \to (B', I', \delta')$ in ${\rm{Cris}}(R/A)$
the natural map of $B'$-modules $$B' \otimes_B \mathscr{F}(B, I, \delta) \to \mathscr{F}(B', I', \delta')$$
is an isomorphism. (We may write $\mathscr{F}_B$ rather than $\mathscr{F}(B, I, \delta)$ to simplify notation.) 
\end{definition}

The most basic example of a crystal is $\mathscr{F} = \mathscr{O}_{R/A}^{\rm{cris}}$, though for any crystal $\mathscr{F}$ whatsoever
we have via $${\rm{pr}}_1, {\rm{pr}}_2: \Spec(B(1)/I(1)^{[2]}) \rightrightarrows \Spec(B)$$ a pair of $B(1)/I(1)^{[2]}$-linear isomorphisms
$${\rm{pr}}_1^{\ast}(\mathscr{F}_B) \simeq \mathscr{F}_{B(1)/I(1)^{[2]}} \simeq {\rm{pr}}_2^{\ast}(\mathscr{F}_B).$$
Thus, for any $s \in \mathscr{F}_B$ we can form the difference 
$$\nabla(s) := {\rm{pr}}_1^{\ast}(s) - {\rm{pr}}_2^{\ast}(s) \in \mathscr{F}_{B(1)/I(1)^{[2]}}.$$
Using the quotient map $B(1)/I(1)^{[2]} \twoheadrightarrow B(1)/I(1) = B$, the resulting map
$$\mathscr{F}_{B(1)/I(1)^{[2]}} \to \mathscr{F}_B$$
carries both ${\rm{pr}}_1^{\ast}(s)$ and ${\rm{pr}}_2^{\ast}(s)$ to $s$, so
$$\nabla(s) \in \ker(\mathscr{F}_{B(1)/I(1)^{[2]}} \to \mathscr{F}_B).$$

The crystal property provides an identification
$$(B(1)/I(1)^{[2]}) \otimes_B \mathscr{F}_B \simeq \mathscr{F}_{B(1)/I(1)^{[2]}}$$
(using $B \to B(1)/I(1)^{[2]}$ defined by $B \to B(1)$ via $b \mapsto 1 \otimes b$), and
the split-exact $B$-linear sequence (\ref{omega1B}) provides an isomorphism
$$\ker(\mathscr{F}_{B(1)/I(1)^{[2]}} \to \mathscr{F}_B) \simeq \mathscr{F}_B \otimes_B \Omega^1_{B/A, \delta},$$ 
so we obtain a natural $A$-linear map
$$\nabla: \mathscr{F} \to \mathscr{F} \otimes_{\mathscr{O}_{R/A}^{\rm{cris}}} \Omega^1_{R/A, {\rm{cris}}}.$$

The purely algebraic computations in \cite[Tag 07J5, 07J6]{SP} (which never use the running hypothesis from \cite[Tag 07MF]{SP}
that some prime is locally nilpotent) show that $\nabla$ is a ``divided power'' connection
(i.e., $\nabla(bs) = s \otimes {\rm{d}}b + b \nabla(s)$ for $s \in \mathscr{F}_B$ and $b \in B$ for $(B, J, \delta) \in {\rm{Cris}}(R/A)$).
Letting $\Omega^i_{R/A, {\rm{cris}}}$ denote the $i$th exterior power of $\Omega^1_{R/A, {\rm{cris}}}$, 
the purely algebraic computations with usual K\"{a}hler differentials as in \cite[Tag 07I0]{SP}
work verbatim using divided power differentials and so $\nabla$ uniquely extends to a collection of $A$-linear maps
$$\nabla: \mathscr{F} \otimes_{\mathscr{O}_{R/A}^{\rm{cris}}} \Omega^i_{R/A, {\rm{cris}}} \to
\mathscr{F} \otimes_{\mathscr{O}_{R/A}^{\rm{cris}}} \Omega^{i+1}_{R/A, {\rm{cris}}}$$
satisfying by usual Leibnitz rule. The resulting diagram
\begin{equation}\label{Fcris}
 \mathscr{F} \stackrel{\nabla}{\to} \mathscr{F} \otimes \Omega^1_{R/A, {\rm{cris}}} \stackrel{\nabla}{\to}
\mathscr{F} \otimes \Omega^2_{R/A, {\rm{cris}}} \stackrel{\nabla}{\to} \dots
\end{equation}
is a complex due to \cite[Tag 07J6]{SP}.

In \S\ref{MICsec} we will provide a healthy supply of crystals via module-theoretic data, by adapting the procedures
in \cite[Tag 07J7]{SP} (which is for pd-thickenings on which a power of a fixed prime $p$ vanishes),
replacing $p$-adic completions with pd-adic completions.  
The role of $p$-adic completions in \cite[Tag 07J7]{SP} serves
a quite different purpose from our use of pd-adic completions, but nonetheless
the algebraic formalism will work similarly.  
Some care will be needed for our version since for a smooth
$A$-algebra $P$ and ideal $J$ in $P$, we will need to work with the 
$$\widehat{D} := \varprojlim_{n \ge 1} D_{P,\eta}(J)/n! J_D^{[n]}$$
but (as noted in Remark \ref{pdadicrem}) for the divided power ideal $\widehat{J} := \varprojlim_{n \ge 1} J_D/n! J_D^{[n]}$
generally $\widehat{D} \to \varprojlim \widehat{D}/n! \widehat{J}^{[n]}$ is likely not an isomorphism.

\subsection{Equivalence between crystals and modules with integrable connection}\label{MICsec}

Consider a divided power ring $(A, I_0, \eta)$, a finite type $A$-algebra $R$ such that $\eta$
extends to $I_0 R$ (as holds if $R$ is $A$-flat or $I_0$ is principal),
and a smooth $A$-algebra $P$ equipped with a surjection $P \twoheadrightarrow R$ having kernel ideal $J$, 
and an \'etale map $\Spec(P) \to \mathbf{A}^d_A$ over $A$ (e.g., given $R$, we can choose such $P$ that is a polynomial algebra over $A$).
Define $D_n = D_{P,\eta}(J)/n! J_D^{[n]}$ and let $\widehat{D} = \varprojlim D_n$ be the pd-adic completion of $D_{P,\eta}(J)$, 
equipped with its natural topology.  Let $\gamma$ be the divided power structure
on $J_D \subset D_{P,\eta}(J)$.  

Suppose we are given an inverse system
$(M_n)$ of $D_n$-modules
such that 
\begin{equation}\label{mdn}
M_{n+1} \otimes_{D_{n+1}} D_n \simeq M_n
\end{equation}
for all $n$, and so $M := \varprojlim M_n$ is a $\widehat{D}$-module.
As an example, if we are given a finite projective $\widehat{D}$-module $M$ (which will be the main case of interest) then for
$M_n := M \otimes_{\widehat{D}} D_n$ the natural map $M \to \varprojlim M_n$
is an isomorphism (since expressing $M$ as a direct summand of a free
$D$-module of finite rank reduces this to the trivial case $M=D$).  

We define $\widehat{\Omega}^1_{\widehat{D}/A,\gamma}$ to be the pd-adic completion
of $\Omega^1_{D_{P,\eta}(J)/A,\gamma}$. Since  $\Omega^1_{D_{P,\eta}(J)/A,\gamma} = 
D_{P,\eta}(J) \otimes_P \Omega^1_{P/A}$ (see \cite[3.20(1)]{bogus} and \cite[Tag 07HW]{SP}) and 
$\Omega^1_{P/A}$ is a finite projective $P$-module, clearly
$$\widehat{\Omega}^1_{\widehat{D}/A,\gamma} = \widehat{D} \otimes_P \Omega^1_{P/A},$$
so this is a finite projective $\widehat{D}$-module and as such has an evident complete topology.  Two cofinal systems of quotients
of $\widehat{\Omega}^1_{\widehat{D}/A,\gamma}$ by open $\widehat{D}$-submodules are
$$\{\Omega^1_{D_n/A, \gamma}\}_{n \ge 1},\,\,\,
\{\Omega^1_{D_{P,\eta}(J)/A,\gamma}/n! J_D^{[n]} \Omega^1_{D_{P,\eta}(J)/A,\gamma}\}_{n \ge 1}$$
since we have the factorization
$$\Omega^1_{D_{P,\eta}(J)/A, \gamma}/n! J_D^{[n]} \Omega^1_{D_{P,\eta}(J)/A,\gamma} \twoheadrightarrow 
\Omega^1_{D_n/A,\gamma} \twoheadrightarrow \Omega^1_{D_{P,\eta}(J)/A,\gamma}/(n-1)! J_D^{[n-1]} \Omega^1_{D_{P,\eta}(J)/A,\gamma}$$
as quotients of $\widehat{\Omega}^1_{\widehat{D}/A,\gamma}$.
Thus, the natural maps
\begin{equation}\label{Momega}
M \otimes_{\widehat{D}} \widehat{\Omega}^1_{\widehat{D}/A,\gamma} \to
\varprojlim_{n \ge 1} M_n \otimes_{D_n} \Omega^1_{D_n/A,\gamma},\,\,\,
M \otimes_{\widehat{D}} \widehat{\Omega}^1_{\widehat{D}/A,\gamma} \to
\varprojlim_{n\ge 1} M_n \otimes_{D_n} (\Omega^1_{D_{P,\eta}(J)/A,\gamma}/n! J_D^{[n]} \Omega^1_{D_{P,\eta}(J)/A,\gamma})
\end{equation}
are isomorphisms (so $M \otimes_{\widehat{D}} \widehat{\Omega}^1_{\widehat{D}/A,\gamma}$
coincides with the corresponding completed tensor product)
and the $A$-linear derivation ${\rm{d}}:D_{P,\eta}(J) \to \Omega^1_{D_{P,\eta}(J)/A,\gamma} \to \widehat{\Omega}^1_{\widehat{D}/A,\gamma}$
uniquely extends to a continuous $A$-linear derivation ${\rm{d}}: \widehat{D} \to \widehat{\Omega}^1_{\widehat{D}/A,\gamma}$.
We assumed $P$ is equipped with an \'etale $A$-algebra map
$A[t_1,\dots,t_d] \to P$, so $\Omega^1_{P/A}$ is $P$-free with the ${\rm{d}}t_j$'s as a basis.
Hence, $\widehat{\Omega}^1_{\widehat{D}/A}$ is $\widehat{D}$-free with the ${\rm{d}}t_j$'s as a basis. 

\begin{definition}\label{conndef}
Consider an \'etale map 
$\Spec(P) \to \mathbf{A}^d_A$, an ideal $J \subset P$, $D_n := D_{P,\eta}(J)/n! J_D^{[n]}$, and an inverse system $(M_n)$ of modules over
$(D_n)$ satisfying (\ref{mdn}) as above, and let $\gamma$ denote the divided power structure on
$J_D \subset D_{P,\eta}(J)$.  An $A$-linear {\em connection} on the topological module 
$M = \varprojlim M_n$ over $\widehat{D} = \varprojlim D_n$ is 
 a continuous $A$-linear map 
$$\nabla: M \to M \otimes_{\widehat{D}} \widehat{\Omega}^1_{\widehat{D}/A,\gamma} = \bigoplus_{j=1}^d M  {\rm{d}}t_j$$
satisying $\nabla(fm) = m \otimes {\rm{d}}f + f \nabla(m)$ for $f \in \widehat{D}$.
\end{definition}

Note that the definition of a connection doesn't require $P$ to admit \'etale coordinates over $A$, and doesn't rely on that choice.
But if we want to describe $\nabla$ in concrete terms using the $\widehat{D}$-basis of ${\rm{d}}t_j$'s then we need such a choice.

\begin{example}\label{cartex} One way to make an $A$-linear connection on $M$ is from a compatible system of $A$-linear connections
$\nabla_n: M_n \to M_n \otimes_{D_n} \Omega^1_{D_n/A}$ upon passage to the inverse limit.  The $\nabla$'s arising in this way 
are precisely those for which each composite map
$$M \stackrel{\nabla}{\to} M \otimes_{\widehat{D}} \widehat{\Omega}^1_{\widehat{D}/A,\gamma} \to M_n \otimes_{D_n} \Omega^1_{D_n/A,\gamma}$$
factors through $M \twoheadrightarrow M_n$ for each $n$, in which case
the resulting maps $\nabla_n: M_n \to M_n \otimes_{D_n} \Omega^1_{D_n/A,\gamma}$ are visibly connections compatible with change in $n$.
\end{example}

\begin{definition} A connection $\nabla:M \to M \otimes_{\widehat{D}} \widehat{\Omega}^1_{\widehat{D}/A, \gamma}$ is 
is {\em cartesian} if it arises as in Example \ref{cartex}. 
\end{definition}

\begin{remark} The concept of $\nabla$ being cartesian doesn't involve \'etale coordinates on $P$
but in general it depends on $(M_n)$ rather than being intrinsic to $M$ as a topological
$\widehat{D}$-module even if each $M_n$ is projective of finite rank over $D_n$.  The reason is that it doesn't seem possible to reconstruct
the $D_n$'s from ideal-theoretic data in $\widehat{D}$; see Remark \ref{pdadicrem}.

 In the special case that $R$ is smooth (with \'etale coordinates) and $P = R$ (i.e., $J=0$),
we have $\widehat{D} = R$ and $M$ is just an $R$-module by another name, so
the continuity conditions in Definition \ref{conndef} are automatic and the notion of connection considered here is the usual
algebraic notion $\nabla: M \to M \otimes_R \Omega^1_{R/A}$. In particular, the cartesian condition is automatic in such cases.
\end{remark}

The de Rham complex $\Omega^*_{D_{P,\eta}(J)/A,\gamma} = D_{P,\eta}(J) \otimes_P \Omega^*_{P/A}$
has differentials that are continuous for the pd-adic topology on each term, so we obtain 
a de Rham complex $\widehat{\Omega}^*_{\widehat{D}/A,\gamma} = \widehat{D} \otimes_P \Omega^*_{P/A}$ via termwise pd-adic completion
(with continuous differentials).  By the usual algebraic procedures analogous to \cite[Ch.\,I, \S2]{deligne}, 
a connection $\nabla$ on $M$ defines a collection of continuous $A$-linear maps
$$\nabla^i: M \otimes_{\widehat{D}} \widehat{\Omega}^i_{\widehat{D}/A,\gamma} \to M \otimes_{\widehat{D}} \widehat{\Omega}^{i+1}_{\widehat{D}/A,\gamma}$$
subject to the Leibnitz rule $\nabla(m \otimes \omega) = \nabla(m) \wedge \omega + m \otimes {\rm{d}}\omega$
and 
$\nabla^{i+1} \circ \nabla^i$ is linear given by wedge product against $\nabla^1 \circ \nabla^0: M \to M \otimes_{\widehat{D}} 
\widehat{\Omega}^2_{\widehat{D}/A,\gamma}$.  

\begin{definition} The connection $\nabla$ on $M$ is {\em integrable}
when $\nabla^1 \circ \nabla^0: M \to M \otimes_{\widehat{D}} \widehat{\Omega}^1_{\widehat{D}/A, \gamma}$
vanishes. The resulting complex formed by the $\nabla^i$'s is the {\em de Rham complex} associated to $(M, \nabla)$.
\end{definition}

If $\nabla$ is cartesian then each $\nabla^i$ arises via passage to the inverse limit for the maps
$$\nabla^i_n: M_n \otimes_{D_n} \Omega^i_{D_n/A,\gamma} \to M_n \otimes_{D_n} \Omega^{i+1}_{D_n/A,\gamma}$$
arising from $\nabla_n$, so in such cases $\nabla$ is integrable if and only if every $\nabla_n$ is integrable. 
Using the \'etale coordinates on $P$ we can write 
$$\nabla(m) = \sum_{j=1}^d \theta_j(m) \otimes {\rm{d}}t_j$$
for unique continuous $A$-linear $\theta_j:M \to M$, and $\nabla$ is integrable if and only if the $\theta_j$'s pairwise commute.
Clearly $\nabla$ is cartesian if and only if each $\theta_j: M \to M \twoheadrightarrow M_n$ factors as
$$M \twoheadrightarrow M_n \stackrel{\theta_{j,n}}{\to} M_n$$
for some (unique) $A$-linear $\theta_{j,n}$.

\begin{definition} A {\em morphism} $(M, \nabla) \to (M', \nabla')$ between two such pairs with cartesian connections 
is a $\widehat{D}$-linear map $f:M \to M'$ arising from a (necessarily unique)
a compatible system of $D_n$-linear maps
$f_n:M_n \to M'_n$ compatible with $\nabla_n$ and $\nabla'_n$ for every $n$ (so $f$ is compatible with $\nabla$ and $\nabla'$).

A cartesian connection $\nabla = \varprojlim \nabla_n$ on $M = \varprojlim M_n$
is {\em pd quasi-nilpotent} if for each $m \in M$ and $n \ge 1$, the image of $\theta_j^k(m)$ in $M_n$
is divisible by $n!$ for all sufficiently large $k$ $($perhaps depending on $m \in M$ and $n\ge 1$$)$.
Equivalently, for each $n \ge 1$ and $m \in M_n$, the element $\theta_{j,n}^k(m) \in M_n$ is divisible by $n!$ for all sufficiently large $k$
(perhaps depending on $m$).
\end{definition}

\begin{remark}\label{pdqindep}
Although the notion of a connection $\nabla:M \to M \otimes_{\widehat{D}} \widehat{\Omega}^1_{\widehat{D}/A}$
and the property of it being cartesian or integrable 
have nothing to do with a choice of \'etale coordinates on $P$, such coordinates are needed to definition the $\theta_j$'s and so 
are crucial in the definition of pd quasi-nilpotence. Thus, it is not evident if the property of $\nabla$
being pd quasi-nilpotent is independent of the choice of \'etale coordinates on $P$.  That it is indeed independent
of that choice is immediate from the equivalence of categories in Proposition \ref{micequiv} below.
\end{remark}

\begin{proposition}\label{micequiv} Fix a choice of closed immersion $j:\Spec(R) \hookrightarrow \Spec(P)$
into a smooth affine $A$-scheme admitting an \'etale $A$-map
$q:\Spec(P) \to \mathbf{A}^d_A$.  There is an equivalence of categories $($depending on $j$$)$ 
from the category of crystals on ${\rm{Cris}}(R/A)$
to the category of pairs $(M, \nabla)$ as above with $M = \varprojlim M_n$ and $\nabla$ a pd quasi-nilpotent integrable cartesian connection
on $M$.
\end{proposition}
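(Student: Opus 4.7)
The plan is to construct mutually quasi-inverse functors in both directions, closely following the $p$-adic template in \cite[Tag 07J7]{SP} but systematically replacing $p$-adic completion by pd-adic completion along $(n! J_D^{[n]})_{n \ge 1}$, and working at each truncation level $D_n$ separately.

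\textbf{Forward functor.} Starting from a crystal $\mathscr{F}$, I would evaluate at the canonical tower: each $D_n = D_{P,\eta}(J)/n! J_D^{[n]}$ is an object of ${\rm{Cris}}(R/A)$ (the condition $m I^{[n]}=0$ is satisfied with $m = n!$), so set $M_n := \mathscr{F}(D_n)$ and $M := \varprojlim M_n$. The crystal property for the transition morphisms $D_{n+1} \to D_n$ in ${\rm{Cris}}(R/A)$ gives the cartesian identifications (\ref{mdn}). The abstract divided power connection $\nabla$ on $\mathscr{F}$ constructed in \S\ref{connsec} evaluates on $D_n$ via the object $B(1)/I(1)^{[2]}$ built from $D_n \otimes_A D_n$, producing $\nabla_n : M_n \to M_n \otimes_{D_n} \Omega^1_{D_n/A,\gamma}$; passing to the inverse limit yields an integrable cartesian connection on $M$ by Example \ref{cartex}. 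Integrability of each $\nabla_n$ is the purely formal computation of \cite[Tag 07J6]{SP}.

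\textbf{pd quasi-nilpotence.} This is the place where the coefficient $n!$ enters. Working modulo $n! J_D^{[n]}$, I would evaluate the crystal along the quotient $B(1)/I(1)^{[n+1]}$ of the $(n+1)$st infinitesimal pd-neighborhood and expand the two pullbacks ${\rm{pr}}_1^*(s), {\rm{pr}}_2^*(s)$ in the pd basis $(t_i \otimes 1 - 1 \otimes t_i)^{[k_i]}$ of $D_n(1)$. The coefficient of $\prod (t_j \otimes 1 - 1 \otimes t_j)^{[k_j]}$ is $\prod \theta_j^{k_j}(m)$, while Remark \ref{B(1)pd} shows that for every fixed $n$ we have $n! \cdot (\text{pd ideal})^{[N]}=0$ in the relevant quotient once $N$ is sufficiently large relative to $n$. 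This forces $\prod \theta_j^{k_j}(m)$ to be divisible by $n!$ in $M_n$ once $\sum k_j$ is large, which is the desired pd quasi-nilpotence once one specializes to a single $\theta_j$.

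\textbf{Inverse functor.} Given $(M,\nabla)$, I would define a presheaf $\mathscr{F}$ on ${\rm{Cris}}(R/A)$ as follows. For $(B,I,\delta)$ with $m I^{[n]}=0$, choose (using Remark \ref{crismap}) a divided power $A$-algebra map $f:D_N \to B$ with $N = \max(m,n)$ lifting the identity on $R$, and set $\mathscr{F}(B) := B \otimes_{D_N, f} M_N$. Independence of the choice of $f$ requires the Taylor isomorphism: for two choices $f_1, f_2 : D_N \to B$, the differences $f_1(t_j) - f_2(t_j)$ lie in the divided power ideal $I$, and I would define
\[
\epsilon(m) \;=\; \sum_{k_1,\dots,k_d \ge 0} \theta_1^{k_1}\cdots\theta_d^{k_d}(m) \;\otimes\; \prod_{j=1}^d \bigl(f_1(t_j)-f_2(t_j)\bigr)^{[k_j]}.
\]
Convergence is the crux: in the quotient $B/m I^{[n']}$ for any $n'$, the pd quasi-nilpotence of $\nabla$ implies that $\theta_1^{k_1}\cdots\theta_d^{k_d}(m) \bmod N$ vanishes for $\sum k_j$ large enough (for any fixed $N$), and simultaneously the pd product factor lies in $I^{[\sum k_j]}$ which, combined with the $m I^{[n]}=0$ hypothesis and the combinatorial estimate from Remark \ref{B(1)pd}, kills all but finitely many terms modulo any fixed $n' \ge 1$. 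The cocycle condition on $\epsilon$ for three choices $f_1,f_2,f_3$ is then a purely formal identity among divided powers, and integrability of $\nabla$ makes $\epsilon$ well-defined at the level of modules (independence of the \'etale coordinates). The crystal property of the resulting $\mathscr{F}$ follows because the construction is manifestly compatible with base change along morphisms in ${\rm{Cris}}(R/A)$.

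\textbf{Main obstacle and quasi-inverse check.} The hardest step is verifying convergence and the cocycle identity for $\epsilon$ in the pd-adic setting, since (unlike in the $p$-adic case) the pd-adic completion $\widehat{D}$ is not complete for its own defining topology (Remark \ref{pdadicrem}), so one must carefully argue level by level on $B/m I^{[n']}$ rather than passing to a global completion. Once this is done, the composite ``crystal $\to (M,\nabla) \to$ crystal'' returns the original crystal because $\mathscr{F}(D_N) = M_N$ by construction and any $(B,I,\delta)$ receives a map from some $D_N$ realizing the crystal value by the crystal property; conversely, starting from $(M,\nabla)$, evaluating the reconstructed crystal at the tower $(D_n)$ recovers $(M_n)$ tautologically and the abstract connection recovered matches $\nabla$ by comparing the Taylor expansion $\epsilon$ with the expression $\nabla(m) = \sum \theta_j(m) \otimes {\rm{d}}t_j$ at first order. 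Functoriality in morphisms of $(M,\nabla)$ and of crystals is routine and falls out of the constructions. This will give the asserted equivalence of categories, with the coordinate-independence of pd quasi-nilpotence (Remark \ref{pdqindep}) as an immediate corollary.
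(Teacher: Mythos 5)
Your proposal follows the same template as the paper's proof — evaluating the crystal at $D_n$ and its self-fiber-products for the forward direction, and building a Taylor cocycle $c_{f,g}$ from the $\theta_j$'s for the inverse — but there are two genuine gaps. First, the pd quasi-nilpotence argument rests on expanding in the pd basis $\xi_j^{[k_j]}$ (with $\xi_j = t_j\otimes 1 - 1\otimes t_j$) of $D_n(1)$. That $D_{P(1),\eta}(J(1))/n!J(1)_D^{[n]}$ is a quotient of a pd-polynomial ring over $D_n$ in these variables is straightforward when $P$ is a polynomial ring, but the proposition allows any smooth $P$ with \'etale coordinates; in that generality the identification (\ref{D1isom}) is precisely the content of Propositions \ref{dqt} and \ref{newpd}, neither of which your proposal invokes, so as written the argument only covers the polynomial case.

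Second, even granting the structure of $D_n(1)$, your reasoning that ``$n!\cdot(\text{pd ideal})^{[N]}=0$ \dots forces $\prod\theta_j^{k_j}(m)$ to be divisible by $n!$'' is a non-sequitur: the vanishing of $n!\,\xi^{[K]}$ only makes the $\xi^{[K]}$-coefficient well-defined in $M_n/n!M_n$; it does not make it vanish. The correct reason for divisibility is that the image of $c(1\widehat{\otimes}m)$ in $M_n\otimes_A A\langle\xi_1,\ldots,\xi_d\rangle/n!\langle\cdots\rangle_+^{[n]}$ is a divided power polynomial, hence supported on finitely many monomials $\xi^{[K]}$, so the $\xi_j^{[k]}$-coefficient $\theta_j^k(m)\bmod n!M_n$ must vanish for $k$ large. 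Your argument substitutes the $n!$-torsion observation for this finiteness of support; the two are not equivalent. (Also, Remark \ref{B(1)pd} is not the source of either fact — that remark concerns $B(1)/I(1)^{[2]}$ and is what makes the connection definable — and the object $B(1)/I(1)^{[n+1]}$ you name is a different truncation from the $D_n(1)$ you then use; the paper needs the interweaving of the two families $\{D(1)/n!J(1)_D^{[n]}\}$ and $\{D_n(1)\}$ to identify $\widehat{D}(1)$.) The inverse functor is essentially the paper's, though the tower $B/mI^{[n']}$ is an unnecessary detour: $N!I^{[N]}=0$ already holds in $B$ with $N=\max(m,n)$, which together with pd quasi-nilpotence makes the Taylor sum $\sum_K \theta^K(m)\otimes b^K$ finite on the nose in $B$ itself.
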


Keep in mind that $A$ is equipped with a divided power structure $(I_0, \eta)$ and we are assuming
$\eta$ extends to $I_0 R$ (as holds when $R$ is $A$-flat or $I_0$ is principal or $I_0 R = 0$). 

\begin{remark}\label{indepq}
The definition of the functor from crystals to such pairs $(M, \nabla)$ will {\em not} use the existence of $q$, but
the meaning of pd quasi-nilpotence of $\nabla$ and the proof of the equivalence rely upon the choice of $q$.
Thus, the choice of $q$ is essential for the meaning of the result and for the proof, but
once the proof is over we'll know that the choice of $q$ doesn't matter for the meaning and validity of the equivalence,
granting that {\em some} $q$ exists.  Hence, we'll be able to globalize the equivalence
(since some $q$ always exists Zariski-locally); see Proposition \ref{globalmic} for the global version.
\end{remark}

Proposition \ref{micequiv}  is an analogue of \cite[Tag 07JH]{SP}.  If 
$R$ is $A$-smooth with \'etale coordinates then 
we can choose $P=R$ (so $J=0$), in which case the equivalence is with the category of $R$-modules
equipped with an integrable $A$-linear connection satisfying the additional divisibility condition on the $\theta_j^k$'s (which is automatic
when $A$ is a $\mathbf{Q}$-algebra).  In the special case that 
$A$ is a $\mathbf{Z}/N\mathbf{Z}$-algebra for some $N \ge 1$, the condition on the $\theta_j$'s
is that they are pointwise nilpotent on each $M_n$ for sufficiently large $n$.

Before we prove Proposition \ref{micequiv}, we record a natural supply of examples of vector bundles equipped with a pd-quasi-nilpotent connection
in the case $J=0$ and we prove a preliminary algebraic result that will be useful both here and later on to reduce some tasks with general smooth
algebras to the setting of polynomial algebras.

\begin{example}\label{katzex} Let  $X \to \Spec(R)$ be smooth and proper over a $\mathbf{Z}$-flat ring
$R$, and assume its de Rham cohomologies
are vector bundles (so their formation commutes with any base change on $R$).
Then the integrable Gauss-Manin connection $\mathbf{H}^i_{\rm{dR}}(X/R)$ is pd quasi-nilpotent: a more refined version of 
this divisibility condition on the $\theta_j$'s is given in \cite[Thm.\,5.10]{katz}.
\end{example}

As a preliminary step for the proof of Proposition \ref{micequiv}, we need to
discuss a result on rewriting certain divided power envelopes as
a divided power envelope of another type.  This is the mechanism by which we will be able to work
with general smooth $A$-algebras admitting \'etale coordinates (rather than just polynomial rings) in what follows.
First, we record a useful lemma that will be applied to smooth algebras:

\begin{lemma}\label{smoothpd} Let $(A,J,\gamma)$ be a divided power ring and $B$ be an $A$-algebra
equipped with a section $\sigma: \Spec(A) \hookrightarrow \Spec(B)$ corresponding to an ideal $I \subset B$.
  In the divided power envelope $D_{B,0}(I)$,
there exists a unique divided power structure on $J D_{B,0}(I) + I_D$ extending the natural divided power structure
on $I_D$ and the given $\gamma$ on $J$.  More specifically, the natural map
$D_{B,0}(I) \to D_{B,\gamma}(I)$ is an isomorphism. 
\end{lemma}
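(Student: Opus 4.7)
The plan is to exploit the section $\sigma$ to split $D := D_{B,0}(I)$ canonically as $A \oplus I_D$ (as $A$-modules), thereby reducing the construction of the desired divided power structure to assembling a pd structure on the sum of two ideals whose intersection inside $D$ is zero.

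First I would observe that the retraction $B \twoheadrightarrow A$ with kernel $I$ extends uniquely to a ring map $D \to A$ by the universal property of $D_{B,0}(I)$ applied to the trivial pd structure on the zero ideal of $A$; its kernel is precisely $I_D$. Combining with the structure map $A \to D$ then yields the $A$-module decomposition $D = A \oplus I_D$. Since $J$ sits inside the $A$-summand, we get $J \cap I_D = 0$, and since $JI_D \subset I_D$ we deduce $JD + I_D = J \oplus I_D$, so each element admits a unique expression $j + y$ with $j \in J$ and $y \in I_D$.

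The unique candidate for the desired pd structure on $JD + I_D$ is then forced by the pd addition rule and the requirement that it restrict to $\gamma$ on $J$ and to the canonical $\bar{\delta}$ on $I_D$, namely
\[
\delta_n(j+y) \;:=\; \sum_{i+k=n} \gamma_i(j)\,\bar{\delta}_k(y).
\]
The remaining work is to verify the pd axioms for this formula. Thanks to the direct sum decomposition, each axiom reduces by $A$-bilinear expansion to the corresponding axiom for $\gamma$, the corresponding axiom for $\bar{\delta}$, and mixed cross terms which collapse using these axioms; alternatively, one may invoke the standard fact that two pd structures on ideals in a common ring which agree on their intersection extend uniquely to their sum, applied here with the trivially agreeing restrictions to $J \cap I_D = 0$.

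Once this pd structure on $JD + I_D$ is produced, the canonical pd structure on $I_D \subset D_{B,0}(I)$ is by definition compatible with $\gamma$ on $J \subset A$, so $D_{B,0}(I)$ already satisfies the universal property of $D_{B,\gamma}(I)$ and the natural surjective map $D_{B,0}(I) \to D_{B,\gamma}(I)$ is an isomorphism. I expect the main subtlety in the direct verification to be the composition axiom $\delta_n(\delta_m(x)) = \tfrac{(nm)!}{n!\,(m!)^n}\delta_{nm}(x)$: one computes $\delta_m(j+y) = \gamma_m(j) + \sum_{i<m}\gamma_i(j)\bar{\delta}_{m-i}(y)$, whose $J$-part is $\gamma_m(j)$ and whose $I_D$-part is the remaining sum, then re-expands $\delta_n$ applied to this and matches against $\delta_{nm}(j+y)$, a task that is mechanical but requires some care with binomial coefficients.
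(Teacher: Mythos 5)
The paper's own proof is a single-line citation to \cite[3.20(6)]{bogus}, so your constructive argument via the splitting $D := D_{B,0}(I) = A \oplus I_D$ is a genuinely different route. Your setup is correct: the retraction $D \twoheadrightarrow A$ induced by $B \twoheadrightarrow A$ has kernel $I_D$, giving the $A$-module splitting; the identity $JD + I_D = J \oplus I_D$ holds as an internal direct sum; the displayed formula for $\delta_n(j+y)$ is the unique candidate; and the concluding deduction of $D_{B,0}(I) \simeq D_{B,\gamma}(I)$ from the existence of this pd structure (via the universal property of $D_{B,\gamma}(I)$) is sound.

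The weak point is the ``alternatively'' clause. The gluing lemma you invoke concerns two pd \emph{ideals of the same ring}, but $J$ is an ideal of $A$, not of $D$: since $D = A \oplus I_D$ one has $DJ = J \oplus JI_D$, which is not contained in $J$ unless $JI_D = 0$. Replacing $J$ by the $D$-ideal $JD$ does not salvage the shortcut, for two reasons. First, $JD \cap I_D = JI_D$, not $0$ (agreement on the product ideal is automatic, so this particular point is only a misstatement). Second, and more seriously, one would need a pd structure on $JD$ extending $\gamma$ \emph{before} the gluing lemma can be applied, and extendability of $\gamma$ along $A \to D$ is not automatic (it holds under flatness hypotheses on $D/A$ that are not available here); this is essentially part of what the lemma is asserting, so the gluing lemma cannot be used to bypass the verification. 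That leaves your primary route, the direct check of the pd axioms for the displayed formula. It does go through, but it is more than $A$-bilinear bookkeeping: already multiplicativity $\delta_n(dx) = d^n\delta_n(x)$ for $d = \alpha + z \in A \oplus I_D$ (not merely $d \in A$) requires the relations $j^p = p!\,\gamma_p(j)$ and $z^p = p!\,\bar{\delta}_p(z)$ to reassemble the cross terms into $(\alpha+z)^n$, and as you note the composition axiom is the hardest. Since that verification is flagged but not executed, and the proposed shortcut fails, the proof as written is incomplete.
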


\begin{proof}
The isomorphism property for $D_{B,0}(I) \to D_{B,\gamma}(I)$ is \cite[3.20(6)]{bogus}.
\end{proof}

\begin{proposition}\label{newpd} Let $(A, I_0, \eta)$ be a divided power ring, 
$P$ a smooth $A$-algebra, and $R = P/J$.
Let $P'$ be a smooth $P$-algebra equipped with
a section $\sigma:\Spec(P) \hookrightarrow \Spec(P')$ corresponding to an ideal $K$,
and let $J'$ be the kernel of $P' \twoheadrightarrow P \twoheadrightarrow R$ $($i.e., $J'/K = J$$)$.

For the smooth $D_{P,\eta}(J)$-algebra $Q := D_{P,\eta}(J) \otimes_P P'$
and ideal $I := D_{P,\eta}(J) \otimes_P K$ of the associated section
$\Spec(D_{P,\eta}(J)) \hookrightarrow \Spec(Q)$, naturally 
\begin{equation}\label{divisom}
D_{P',\eta}(J') \simeq D_{Q,\eta}(I) = D_{Q,0}(I)
\end{equation} as divided power $D_{P,\eta}(J)$-algebras respecting 
the natural divided powers on $J'_D$ and $I_D + J_D D_{Q,\eta}(I)$ compatible with $\eta$ on $I_0 \subset A$.
\end{proposition}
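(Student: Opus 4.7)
The plan is to exhibit both sides as representing the same universal property, namely being initial in the category of divided power $A$-algebras $(B, \mathfrak{b}, \delta)$ compatible with $\eta$ equipped with a ring map from $P'$ (equivalently, with a ring map from $Q$ respecting the $D_{P,\eta}(J)$-algebra structure) carrying the relevant ideal into $\mathfrak{b}$. The asserted equality $D_{Q,\eta}(I) = D_{Q,0}(I)$ is immediate from Lemma \ref{smoothpd}, applied with divided power base $(D_{P,\eta}(J), J_D, \gamma)$ and the $D_{P,\eta}(J)$-algebra $Q$ equipped with the canonical section $\Spec(D_{P,\eta}(J)) \hookrightarrow \Spec(Q)$ obtained by base change of $\sigma$ along $P \to D_{P,\eta}(J)$, with $I$ the ideal of that section. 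So the remaining task is to produce the natural isomorphism $D_{P',\eta}(J') \simeq D_{Q,\eta}(I)$.

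To construct a map $D_{Q,\eta}(I) \to D_{P',\eta}(J')$, I would first observe that the composite $P \to P' \to D_{P',\eta}(J')$ sends $J$ into $J' \subset J'_D$, so the universal property of $D_{P,\eta}(J)$ produces a divided-power map $D_{P,\eta}(J) \to D_{P',\eta}(J')$ respecting $\eta$. Combined with $P' \to D_{P',\eta}(J')$, this yields an $A$-algebra map $Q = D_{P,\eta}(J) \otimes_P P' \to D_{P',\eta}(J')$. Since $I$ is generated by the image of $K$, and $K \subset J' \subset J'_D$, the ideal $I$ maps into $J'_D$; hence the universal property of $D_{Q,\eta}(I)$ (as a divided power envelope over $D_{P,\eta}(J)$ respecting $\eta$) extends this to the desired divided-power map.

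For the reverse map $D_{P',\eta}(J') \to D_{Q,\eta}(I)$, the key algebraic input is the decomposition $J' = K + e(J) \cdot P'$ where $e: P \hookrightarrow P'$ is the section. This holds because for any $x \in J'$ we have $\pi(x) \in J$ (where $\pi: P' \to P$ splits $e$), so $x = (x - e\pi(x)) + e\pi(x)$ with $x - e\pi(x) \in \ker \pi = K$ and $e\pi(x) \in e(J)$. Now under $P' \to Q \to D_{Q,\eta}(I)$, the ideal $K$ maps into $I \subset I_D$, while $e(J) = J$ (viewed inside $D_{P,\eta}(J) \subset Q$) maps into $J_D \cdot D_{Q,\eta}(I)$. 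Thus $J'$ lands in the divided-power ideal $I_D + J_D \cdot D_{Q,\eta}(I)$, and the universal property of $D_{P',\eta}(J')$ then supplies a divided-power $A$-algebra map $D_{P',\eta}(J') \to D_{Q,\eta}(I)$ respecting $\eta$.

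Finally, the two maps are mutually inverse by the uniqueness clauses in the two universal properties (each side is initial, and both composites restrict to the identity on the generating images of $P'$ and $Q$, respectively). The main obstacle I expect is the bookkeeping of the divided power compatibilities: specifically, identifying $D_{Q,\eta}(I)$ with $D_{Q,0}(I)$ (which requires invoking Lemma \ref{smoothpd} to dispense with the extraneous compatibility with $\gamma$ on $J_D$) and verifying that $I_D + J_D \cdot D_{Q,\eta}(I)$ is precisely the divided power ideal compatible with $\eta$ into which $J'$ lands under the map constructed from $P' \to Q$, so that the universal property of $D_{P',\eta}(J')$ may be legitimately applied.
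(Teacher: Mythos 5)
Your proposal is correct and follows essentially the same route as the paper's proof: both directions are built from the universal properties (the map $Q \to D_{P',\eta}(J')$ sending $I$ into $J'_D$, and the map $P' \to D_{Q}(I)$ sending $J' = K + JP'$ into $I_D + J_D D_{Q}(I)$, whose divided power structure comes from Lemma \ref{smoothpd}), with the two maps inverse by uniqueness. The only minor quibble is that the identification $D_{Q,\eta}(I) = D_{Q,0}(I)$ is the instance of Lemma \ref{smoothpd} for the base equipped with its divided powers on $I_0 D_{P,\eta}(J)$ extending $\eta$, whereas the instance with base ideal $J_D$ (which you also use) is what furnishes the divided powers on $I_D + J_D D_{Q,0}(I)$.
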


The equality $D_{Q,0}(J) = D_{Q,\eta}(I)$ is an instance of Lemma \ref{smoothpd}
applied to the algebra $Q$ over the divided power base ring $D_{P,\eta}(J)$ equipped
with its divided powers on $I_0 D_{P,\eta}(J)$ extending $\eta$. 

\begin{remark} The merit of (\ref{divisom}) is that whereas $J' \subset P'$ is not the ideal of a section to
the smooth structure map $\Spec(P') \to \Spec(A)$, $I \subset Q$
is the ideal of a section to the smooth map $\Spec(Q) \to \Spec(D_{P,\eta}(J))$. When combined with Proposition \ref{dqt},
we'll be able to analyze $D_{P',\eta}(J')$ by carrying out computations with $D_{Q,0}(I)$ ``as if'' $\Spec(Q)$ were an affine space over $D_{P,\eta}(J)$
with $I$ the ideal of the origin (the price to pay is that the original divided power base ring $A$ is replaced with
$D_{P,\eta}(J)$ equipped with its divided powers on $J_D$ compatible with $\eta$); 
see (\ref{D1isom}) for an illustration.  The main cases of interest will be $P' = P^{\otimes_A (\nu+1)}$ (with $\nu \ge 1$) viewed as a $P$-algebra
via the first tensor factor and $\sigma$ the diagonal section.
\end{remark}

\begin{proof}
Note that $J' = K + JP'$ (since $JP' = J \otimes_P P'$ by $P$-flatness of $P'$). 
We will use the universal properties of divided power envelopes to build $D_{P,\eta}(J)$-algebra maps
$$g:D_{Q,\eta}(I) \to D_{P',\eta}(J'),\,\,\,
h:D_{P',\eta}(J') \to D_{Q,\eta}(I)$$ 
that will be  inverse to each other. 

Since $D_{P',\eta}(J')$ is a $D_{P,\eta}(J)$-algebra and a $P'$-algebra compatibly with the $P$-algebra structure on each,
we get a natural ring map $f:Q \to D_{P',\eta}(J')$ over $D_{P,\eta}(J)$.  This carries $I= D_{P,\eta}(J) \otimes_P K$
into the ideal generated by $J'$ and hence into the ideal $J'_D$ that has divided powers, so $f$ uniquely factors through a
map of divided power rings $g: D_{Q,\eta}(I) \to D_{P',\eta}(J')$  (over $D_{P,\eta}(J)$ since $f$ is a $D_{P,\eta}(J)$-algebra map).
To build $h$ in the reverse direction, we look more closely at $D_{Q,\eta}(I)$.  In this ring we have the ideal
$I_D$ equipped with divided powers, and as a $D_{P,\eta}(J)$-algebra we claim that the ideal generated by 
$J_D$ and $I_D$ has unique divided powers that extend the ones on $I_D$ and
$J_D \subset D_{P,\eta}(J)$.  This is an instance of Lemma \ref{smoothpd}. 

Since $I = D_{P,\eta}(J) \otimes_P K$, the natural $P$-algebra map
$$P' \to Q \to D_{Q}(I)$$
carries $J' = K + JP'$ into the ideal $J_D D_{Q}(I) + I_D$ equipped
with divided powers via Lemma \ref{smoothpd}.  Thus, we obtained a natural map of divided power rings
$g:D_{P',\eta}(J') \to D_{Q,\eta}(I)$ over $P \to D_{P,\eta}(J)$, so $g$ is clearly a map of
$D_{P,\eta}(J)$-algebras.  Since $g$ and $h$ are each built as maps of divided power $D_{P,\eta}(J)$-algebras, it is straightforward
to check via universality considerations (or bare-hand computations with generators) that they are inverse to each other.
This finishes the proof of Proposition \ref{newpd}.
\end{proof}

\begin{proof}(of Proposition \ref{micequiv}) 
Let $\mathscr{F}$ be a crystal on ${\rm{Cris}}(R/A)$.
Each $D_n = D_{P,\eta}(J)/[n]! J_D^{[n]}$ is an object in ${\rm{Cris}}(R/A)$, 
so we can define a $D_n$-module $M_n = \mathscr{F}_{D_n}$.
These modules 
satisfy (\ref{mdn}) by the crystal property
and  are equipped with compatible integrable connections $\nabla_n:M_n \to M_n \otimes_{D_n} \Omega^1_{D_n/A,\gamma}$
(see the discussion preceding (\ref{Fcris})).  Then $M := \varprojlim M_n$
is a $\widehat{D}$-module equipped with a cartesian integrable connection
$$\nabla = \varprojlim \nabla_n: M \to M \otimes_{\widehat{D}} \widehat{\Omega}^1_{\widehat{D}/A, \gamma}.$$

Upon writing
$\nabla_n(m) = \sum_{j=1}^d \theta_{j,n}(m) \otimes {\rm{d}}t_j$ for $m \in M_n$,
we have to show $\theta_{j,n}^k(m) \in n! M_n$ for all large $k$ (depending on $m$ and $n$).
For $P(1) := P \otimes_A P$ and $J(1) = \ker(P(1) \twoheadrightarrow P \twoheadrightarrow P/J = R)$,
consider $D(1) := D_{P \otimes_A P,\eta}(J(1))$.  By combining Propositions \ref{dqt} and \ref{newpd}, we have isomorphisms 
\begin{equation}\label{D1isom}
D(1)/n!J(1)_D^{[n]} \simeq D_n \langle \xi_1, \dots, \xi_d\rangle/n!(J_D + \langle \xi_1, \dots, \xi_d\rangle_+)^{[n]}
\end{equation} 
compatibly with change in $n$, where $\xi_j = t_j \otimes 1 - 1 \otimes t_j$.
For any $0 \le m \le n$ we have either $n-m \ge n/2$ or $m \ge n/2$, so 
the two systems of quotients
$$D(1)/n! J(1)_D^{[n]},\,\,\,
D_n(1) := D_n \langle \xi_1, \dots, \xi_d \rangle/n! \langle \xi_1, \dots, \xi_d \rangle_+^{[n]}$$
of $D(1)$ interweave each other, so the pd-adic completion $\widehat{D}(1)$ of $D(1)$ is computed as
the inverse limit of each system of quotients. In particular, naturally $\widehat{D}(1)$ is contained in the formal divided power series
$\widehat{D} \langle\!\langle \xi_1, \dots, \xi_d \rangle\!\rangle$ over the pd-adic completion $\widehat{D}$ of $D$.

The crystal property of $\mathscr{F}$ provides $D_n(1)$-linear isomorphisms
$$D_n(1) \otimes_{D_n} M_n \simeq \mathscr{F}_{D_n(1)} \simeq M_n \otimes_{D_n} D_n(1),$$
so upon passage to the inverse limit we get a composite isomorphism of $\widehat{D}(1)$-modules
$$c:\widehat{D}(1) \widehat{\otimes}_{\widehat{D}} M  \simeq M \widehat{\otimes}_{\widehat{D}} \widehat{D}(1) \subset
\prod_{K} M \xi^{[K]},$$
where $K$ varies through multi-indices $(k_1,\dots,k_d)$ in non-negative integers and $\xi^{[K]} := \xi_1^{[k_1]} \cdots \xi_d^{[k_d]}$. 
Writing $c(1 \widehat{\otimes} m) = (\theta_K(m) \xi^{[K]})$, we want to relate the $A$-linear $\theta_K: M \to M$
to the $\theta_j$'s.  By applying Proposition \ref{newpd} to $P(2) = P \otimes_A P \otimes_A P$
and $J(2) = \ker(P(2) \twoheadrightarrow R)$, we have
$$D_{P(2),\eta}(J(2))/n! J(2)_D^{[n]} \simeq D_n \langle \xi_1, \dots, \xi_d, \xi''_1, \dots, \xi''_d \rangle/
n!\langle \xi_1, \dots, \xi_d, \xi'_1, \dots, \xi'_d \rangle_+^{[n]}$$
where $\xi_j = t_j \otimes 1 \otimes 1 - 1 \otimes t_j \otimes 1$
and $\xi''_j = t_j \otimes 1 \otimes 1 - 1 \otimes 1 \otimes t_j$.
Computations as in the second half of the proof of \cite[Tag 07JG]{SP} then yield that $\theta_K = \prod \theta_j^{k_j}$,
so for each $j$ and each $k \ge n$ we see that the image of $\theta_j^k(m)$ in $M_n/n! M_n$
is the $\xi_j^{[k]}$-coefficient of the image of $c(1 \widehat{\otimes} m)$ in 
$$M_n \otimes_{D_n} D_n(1) = M_n \otimes_A A \langle \xi_1, \dots, \xi_d \rangle/n!\langle \xi_1, \dots, \xi_d \rangle_+.$$
For sufficienty large $k \ge n$ this coefficient vanishes since a divided power polynomial involves only finitely many monomials. 

Now we go in reverse: given an $(M, \nabla)$ of the desired type, 
we will naturally build a crystal $\mathscr{F}$ and then see that this naturally inverts the preceding functor
in the other direction.  We may write $\nabla(m) = \sum_j \theta_j(m) {\rm{d}}t_j$
for $A$-linear $\theta_j:M \to M$ arising from a compatible system of $A$-linear maps $\theta_{j,n}: M_n \to M_n$
since $\nabla$ is cartesian, and the $\theta_j$'s pairwise commute since $\nabla$ is integrable. 
Define $\theta^K = \prod \theta_j^{k_j}$ for $K = (k_1, \dots, k_d)$ (so $\theta^{(0,\dots,0)} = {\rm{id}}$). 
For each $n \ge 1$ and $m \in M$, if $K = (k_1, \dots, k_d)$
is a multi-index with $\sum_j k_j$ sufficiently large (equivalent to some $k_j$ to be sufficiently large)
then the image of $\theta^K(m)$ in $M_n$ belongs to $n! M_n$. 

By Remark \ref{crismap}, there exists a map $f:D_N = D_{P,\eta}(J)/N! J_D^{[N]} \to B$ in ${\rm{Cris}}(R/A)$
(i.e., this carries $J_D/N! J_D^{[N]}$ into $I$ respecting the divided powers).  We want to define
$$\mathscr{F}_B := M_N \otimes_{D_N,f} B;$$
this is unaffected by passing to larger $N$ since we are assuming (\ref{mdn}), 
so it only depends on the chosen map $D_{P,\eta}(J) \to B$ of divided power $A$-algebras, 
but we have to check that for a second map $g:D_N \to B$ in ${\rm{Cris}}(R/A)$ there is a canonical $B$-linear isomorphsm
$$c_{f,g}: M_N \otimes_{D_N,f} B \simeq M_N \otimes_{D_N,g} B$$
satisfying 
\begin{equation}\label{ctrans}
c_{g,h} \circ c_{f,g} = c_{f,h}
\end{equation} for a third such map $h:D_N \to B$.

Arguing as in the proof of \cite[Tag 07JH]{SP}, since $b_j := f(t_j) - g(t_j) \in \ker(B \to R) = J$,
we have $N! b_1^{[k_1]} \cdots b_d^{[k_d]} = 0$ whenever $\sum k_j \ge N$ since $N! J^{[N]} = 0$.
Thus, it is well-posed to define a $B$-linear map
$$c_{f,g}: M_N \otimes_{D_N,f} B \to M_N \otimes_{D_N,g} B$$
by $m \otimes 1 \mapsto \sum_K \theta^{K}(m) \otimes b^K$
since the sum is finite (due to the vanishing of $N! b^K$ when $\sum k_j \ne N$
and the fact that $\theta^K(m)$ has image in $M_N$ belonging to $N! M_N$ when $\sum k_j$ is 
sufficiently large depending on $m$ and $N$).
Note that $c_{f,f} = {\rm{id}}$, and the desired identity (\ref{ctrans}) says
$$\sum_{K''} \theta^{K''}(m) b_{f,h}^{[K'']} \stackrel{?}{=} \sum_{K,K'} \theta^{K''}(\theta^{K}(m)) b_{f,g}^{[K]} b_{g,h}^{[K']}$$
with $b_{f,g} = (f(t_1) - g(t_1), \dots, f(t_d) - g(t_d))$ and similarly for $b_{g,h}, b_{f,h}$.  Since
$b_{f,h} = b_{f,g} + b_{g,h}$ and $\theta^{K''} \circ \theta^{K} = \theta^{K'' + K'}$, the desired
identity holds by expanding $(b_{f,g} + b_{g,h})^{[K'']}$ in terms of all expressions of $K''$ as a sum of two multi-indices.
From the identity (\ref{ctrans}) we see that the maps $c_{f,g}$ are isomorphisms
of the required type and that $\mathscr{F}_{(M, \nabla)}: B \mapsto M_N \otimes_{D_N, f} B$ is a crystal.

To show that these two constructions are naturally inverse to each others, the only non-trivial point is
to check that for $\mathscr{F} = \mathscr{F}_{(M, \nabla)}$ the resulting connection on each
$\mathscr{F}_{D_n} = M_n$ is exactly $\nabla_n$.  This in turn follows from
the fact that in the definition of the $c_{f,g}$'s above (which underlies the crystal property of $\mathscr{F}_{(M,\nabla)}$)
we used $b_j := f(t_j) - g(t_j)$ with $\theta_j$ the ${\rm{d}}t_j$-coefficient of $\nabla$.
\end{proof}

\medskip\medskip

\section{An affine comparison theorem using pd-adic completions}\label{compsec}

\subsection{\v{C}ech-Alexander complexes and de Rham theory}

Let $(A, I_0, \eta)$ be a divided power ring, $R$ a finite type $A$-algebra
for which $\eta$ extends to $I_0 R$ (as holds when $R$ is $A$-flat or $I_0$ is principal or $I_0 R = 0$), 
and $\pi:P \twoheadrightarrow R$ a surjection over $A$
from a smooth $A$-algebra $P$ (e.g., a polynomial ring). Let $J = \ker(\pi)$. 
Adapting notation from \cite[2.1]{BdJ}, 
for every $n \ge 0$ let $$J(n) = \ker(P(n) := P^{\otimes_A (n+1)} \twoheadrightarrow R)$$
(so $J(0) = J$) and in the divided power envelope $D_{P(n)}(J(n))$ over $P(n)$ denote by
$J(n)_D$ the canonical divided power ideal and denote by $\gamma(n)$
the divided power structure on $J(n)_D$.  For $N \ge 1$, the quotient ring
$$D_{P(n),\eta}(J(n))/N! J(n)_D^{[N]}$$
is a divided power thickening of $R$ belonging to ${\rm{Cris}}(R/A)$, 
so for any {\em crystal} $\mathscr{F}$ in $\mathscr{O}^{\rm{cris}}_{R/A}$-modules 
 we may form the inverse limit $D_{P(n),\eta}(J(n))$-algebra
$$D(n) = \varprojlim_{N \ge 1} D_{P(n),\eta}(J(n))/N! J(n)_D^{[N]}$$
and $D(n)$-module
$$\mathscr{F}(n) = \varprojlim_{N \ge n} \mathscr{F}_{D_{P(n),\eta}(J(n))/N! J(n)_D^{[N]}}.$$
Since the ideal $J(n)_D^{[N]}$ is generally not finitely generated, we do not expect that the natural surjections
$$D(n)/N! J(n)_D^{[N]} D(n) \to D_{P(n),\eta}(J(n))/N! J(n)_D^{[N]},$$
$$\mathscr{F}(n)/N! J(n)_D^{[N]} \mathscr{F}(n) \to 
\mathscr{F}_{D_{P(n),\eta}(J(n))/N! J(n)_D^{[N]}}$$
to be isomorphisms. 

As $n \ge 0$ varies, the $\mathscr{F}(n)$'s constitute a cosimplicial $A$-module which in turn yields a complex of $A$-modules that we denote as
$${\rm{\mbox{\v{C}}A}}(P \twoheadrightarrow R, \mathscr{F}).$$
In analogy with \cite[2.1]{BdJ}, this is called the {\em \v{C}ech-Alexander complex} for $P \twoheadrightarrow R$ and $\mathscr{F}$;
for $\mathscr{F} = \mathscr{O}_{R/A}^{\rm{cris}}$ we denote it ${\rm{\mbox{\v{C}}A}}(P \twoheadrightarrow R)$.

\begin{theorem}\label{dPoincare}
In the above setting with a crystal $\mathscr{F}$, if $P$ admits \'etale coordinates over $A$ $($i.e., an \'etale map of $A$-algebras
$g:A[t_1,\dots,t_d] \to P$$)$ then there is a natural isomorphism
{\rm{
\begin{equation}\label{camap}
{\rm{\mbox{\v{C}}A}}(P \twoheadrightarrow R, \mathscr{F})  \simeq
\mathscr{F} \widehat{\otimes} 
 \widehat{\Omega}^*_{D_{P,\eta}(J)/A,\gamma} := \varprojlim_{N \ge 1} \mathscr{F}_{D_{P,\eta}(J)/N! J_D^{[N]}} 
 \otimes_{D_{P,\eta}(J)/N! J_D^{[N]}} 
 \Omega^*_{(D_{P,\eta}(J)/N! J_D^{[N]})/A,\gamma}
 \end{equation}
}}in the derived category of $A$-modules, where the right side is an inverse limit of complexes as in $(\ref{Fcris})$.
\end{theorem}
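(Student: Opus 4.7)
\emph{Proof plan.} The strategy adapts the bicomplex argument of Bhatt--de Jong \cite{BdJ} to pd-adic completions, using Propositions \ref{dqt} and \ref{newpd} to describe the higher pd-envelopes $D(n)$ explicitly.

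First, by Proposition \ref{micequiv}, translate $\mathscr{F}$ into a pair $(M, \nabla)$ where $M = \varprojlim M_N$ is a topological $\widehat{D}$-module with pd-quasi-nilpotent integrable cartesian connection. Iterating Proposition \ref{newpd}---applied with $P' = P^{\otimes_A (n+1)}$ viewed as a smooth $P$-algebra via the first tensor factor, with diagonal section cutting out the ideal pd-generated by the differences $\xi_j^{(k)} := 1^{\otimes k} \otimes t_j \otimes 1^{\otimes (n-k)} - t_j \otimes 1^{\otimes n}$---and combining with Proposition \ref{dqt}, obtain a presentation
\[
\widehat{D(n)} \;\cong\; \widehat{D\langle \xi_j^{(k)} : 1 \le j \le d,\; 1 \le k \le n\rangle}
\]
of $\widehat{D(n)}$ as a pd-adically completed pd-polynomial algebra in $nd$ variables over $\widehat{D}$. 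The crystal property identifies $\mathscr{F}(n) \cong M \,\widehat{\otimes}_{\widehat{D}}\, \widehat{D(n)}$ for the chosen structure map, and the cosimplicial coface maps are expressed explicitly in $\xi$-coordinates via the formula $c(m \otimes 1) = \sum_K \theta^K(m)\, \xi^{[K]}$ from the proof of Proposition \ref{micequiv}.

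Next, assemble the double complex
\[
K^{i,n} \;:=\; \mathscr{F}(n) \,\widehat{\otimes}_{\widehat{D(n)}}\, \widehat{\Omega}^i_{\widehat{D(n)}/A,\,\gamma(n)}
\]
with horizontal de Rham and vertical \v{C}ech--Alexander differentials. Its $n=0$ column is the right-hand side of (\ref{camap}), and its $i=0$ row is $\mathrm{\v{C}A}(P \twoheadrightarrow R, \mathscr{F})$. The proof proceeds by showing both edges are quasi-isomorphic to $\mathrm{Tot}(K^{*,*})$ via two spectral sequences. For the horizontal-first sequence, exploit the factorization $\widehat{\Omega}^*_{\widehat{D(n)}/A,\,\gamma(n)} \cong \widehat{\Omega}^*_{\widehat{D}/A,\,\gamma}\, \widehat{\otimes}_{\widehat{D}}\, \widehat{\Omega}^*_{\widehat{D(n)}/\widehat{D},\,\gamma(n)}$ together with a pd-Poincar\'e lemma, namely that the de Rham complex of a pd-polynomial algebra relative to its pd-base is quasi-isomorphic to the base, via an explicit continuous contracting homotopy sending $f \cdot \xi_j^{[k+1]}$ to $f \cdot \xi_j^{[k]}\, d\xi_j$ (up to antisymmetrization). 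Each row thus has cohomology $\mathscr{F} \,\widehat{\otimes}\, \widehat{\Omega}^i_{D/A,\gamma}$ concentrated in $n = 0$ on the $E_2$-page (since all coface maps split the common augmentation $\widehat{D(n)} \to \widehat{D}$, so the induced cosimplicial structure on cohomology is trivial), giving $\mathrm{Tot}(K^{*,*}) \simeq \mathscr{F} \,\widehat{\otimes}\, \widehat{\Omega}^*_{D/A,\gamma}$. To connect $\mathrm{Tot}(K^{*,*})$ to $\mathrm{\v{C}A}$, introduce the natural chain map $\mathscr{F} \,\widehat{\otimes}\, \widehat{\Omega}^i_{D/A,\gamma} \to \mathscr{F}(i)$ sending $m \otimes dt_{j_1} \wedge \cdots \wedge dt_{j_i}$ to the antisymmetrization of $m \otimes \xi_{j_1}^{(1)} \cdots \xi_{j_i}^{(i)}$, and apply a parallel pd-Poincar\'e argument in each $\xi^{(k)}$-direction to show the resulting zig-zag between $\mathrm{\v{C}A}$ and $\mathrm{Tot}(K^{*,*})$ is a quasi-isomorphism.

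The main obstacle, not present in \cite{BdJ}, is that the pd-adic completion $\widehat{D}$ is generally not ideal-adically complete for $\widehat{J}_D$ (see Remark \ref{pdadicrem}): one must verify that the explicit contracting homotopies for the pd-Poincar\'e lemma and the cosimplicial zig-zag are continuous for the pd-adic topology, so that they pass to the inverse limit defining $\widehat{D(n)}$ and $\mathscr{F}(n)$, and that the completed tensor products in the definition of $K^{i,n}$ behave well (as in (\ref{Momega})). The pd-quasi-nilpotence of $\nabla$ supplied by Proposition \ref{micequiv} ensures that the sums $\sum_K \theta^K(m)\, \xi^{[K]}$ converge pd-adically and that the homotopies are bounded in the pd-adic filtration, making all of the completed manipulations rigorous.
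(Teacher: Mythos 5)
Your overall architecture coincides with the paper's: the same first-quadrant double complex with de Rham differentials in one direction and alternating coface (\v{C}ech--Alexander) differentials in the other, the use of Propositions \ref{newpd} and \ref{dqt} to present the higher envelopes $D_{P(\nu),\eta}(J(\nu))$ as (quotients/completions of) pd-polynomial algebras over $D_{P,\eta}(J)$, and degeneration onto the two edges with attention to pd-adic continuity of homotopies. The first genuine gap is in your ``horizontal-first'' step. With nontrivial crystal coefficients you cannot simply invoke a relative pd-Poincar\'e lemma for each fixed cosimplicial level: the differential of that complex mixes the connection of $\mathscr{F}$ (in the $dt_j$-directions) with the de Rham differential in the $\xi$-directions, and your factorization $\widehat{\Omega}^*_{\widehat{D}(n)/A}\simeq \widehat{\Omega}^*_{\widehat{D}/A}\,\widehat{\otimes}_{\widehat{D}}\,\widehat{\Omega}^*_{\widehat{D}(n)/\widehat{D}}$ is only an identification of graded modules. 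To contract the $\xi$-directions you must either prove that the crystal's connection on $\mathscr{F}(n)$ really is the tensor-product connection in $\xi$-coordinates, compatibly with the inverse limits over the non-finitely-generated ideals $n!\,J(n)_D^{[n]}$ (which also requires the interweaving identifications replacing $\Omega^*_{(D/n!J^{[n]})/A}$ by $\Omega^*/n!J^{[n]}\Omega^*$, as in (\ref{Fomega}) and (\ref{betterlim})), or introduce a device that makes the connection invisible. The paper does the latter via the naive-truncation filtrations $F^k$ and $G^k$, whose graded pieces carry only the $\xi$-de Rham differential with coefficients in an inverse system with surjective transition maps, to which the continuous homotopy of Example \ref{basicdR} applies; this is exactly the complication flagged in Remark \ref{caseproof}, and your plan skips it.

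The second gap is on the \v{C}ech--Alexander side, where you depart from the paper: you propose an explicit antisymmetrization map $\mathscr{F}\widehat{\otimes}\widehat{\Omega}^i \to \mathscr{F}(i)$, $m\otimes dt_{j_1}\wedge\cdots\wedge dt_{j_i}\mapsto$ (antisymmetrized) $m\,\xi^{(1)}_{j_1}\cdots\xi^{(i)}_{j_i}$, plus unspecified ``parallel pd-Poincar\'e'' arguments. You verify neither that this is a map of complexes (compatibility of the alternating coface differential with the connection-built de Rham differential, and the divided-power normalizations of the $\xi^{[K]}$'s) nor that it is a quasi-isomorphism; as stated this is an assertion, not an argument. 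What the degeneration onto the \v{C}ech--Alexander edge actually requires --- and what the paper proves --- is that for each $\mu>0$ the fixed-$\mu$ cosimplicial complex $E^{\mu,\ast}$ is homotopic to zero; this reduces, via $\Omega^{\mu}_{D_{P(\nu),\eta}(J(\nu))/A}\simeq D_{P(\nu),\eta}(J(\nu))\otimes_{P(\nu)}\Omega^{\mu}_{P(\nu)/A}$ and the \'etale coordinates, to the contractibility of the cosimplicial module $\Omega^1_{P(\ast)/A}$ for a polynomial ring (\cite[Tag 07L9]{SP}), with homotopies compatible with the level-$N$ quotients so that they survive the inverse limit. Your plan contains no argument of this kind for your substitute map, so that half is open. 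A smaller point: invoking Proposition \ref{micequiv} and pd-quasi-nilpotence is unnecessary --- the paper's proof only uses the crystal property $M(\nu)_n\simeq M_n\otimes_{D_n}D(\nu)_n$ and surjectivity of transition maps --- though this does no harm.
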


The proof of this result is unfortunately somewhat long, and will occupy the rest of \S\ref{compsec}.  First, we make two preliminary remarks.

\begin{remark}
The construction of the map (\ref{camap}) in the derived category is as a zig-zag of maps of complexes which
won't use the \'etale coordinates. But the proof
that it is a zig-zag of quasi-isomorphisms will use such a choice.  
Note that in the special case $P=R$ (so $J=0$), the right side is $\mathscr{F}_R \otimes_R \Omega^*_{R/A}$. 
The method we use is a variant on the technique introduced in \cite[\S2]{BdJ} for the case when the rings are
$\mathbf{Z}/(p^r)$-algebras for some $r \ge 1$.  
\end{remark}

\begin{remark}
In the special case $\mathscr{F} = \mathscr{O}^{\rm{cris}}_{R/A}$, for $\mu \ge 1$ the 
target of (\ref{camap}) has $\mu$th term
\begin{eqnarray*}
\widehat{\Omega}^{\mu}_{D_{P,\eta}(J)/A,\gamma} &=& \varprojlim_{N \ge 1} \Omega^{\mu}_{(D_{P,\eta}(J)/N! J_D^{[N]})/A,\gamma} \\
&=& \varprojlim_{N \ge 1} \Omega^{\mu}_{D_{P,\eta}(J)/A,\gamma}/(N! J_D^{[N]} \Omega^{\mu}_{D_{P,\eta}(J)/A,\gamma} + 
N! J_D^{[N-1]} {\rm{d}}J \wedge \Omega^{\mu-1}_{D_{P,\eta}(J)/A,\gamma})
\end{eqnarray*}
(see \cite[Tag 07HS(3)]{SP} for the behavior of divided power $\Omega^1$, and hence divided power 
$\Omega^{\mu} = \wedge^{\mu} \Omega^1$, under passage to divided
power quotient algebras).  This is the same as the pd-adic completion
$$\varprojlim_{N \ge 1} \Omega^{\mu}_{D_{P,\eta}(J)/A,\gamma}/N! J_D^{[N]} \Omega^{\mu}_{D_{P,\eta}(J)/A,\gamma}$$
of $\Omega^{\mu}_{D_{P,\eta}(J)/A,\gamma}$ since the two  inverse systems of quotients 
of $\Omega^{\mu}_{D_{P,\eta}(J)/A,\gamma}$ are cofinal in each other.
\end{remark}

We define the first-quadrant double complex
$$E^{\mu,\nu} := 
 \varprojlim_{N \ge 0} \mathscr{F}_{D_{P(\nu),\eta}(J(\nu))/N! J(\nu)_D^{[N]}} \otimes_{D_{P(\nu),\eta}(J(\nu))/N! J(\nu)_D^{[N]}} 
 \Omega^{\mu}_{(D_{P(\nu),\eta}(J(\nu))/N! J(\nu)_D^{[N]})/A,\gamma(\nu)}.
$$
Its differentials in the $\mu$-directions
are $(-1)^{\mu}$ times evaluating (\ref{Fcris}) on 
$D_{P(\nu),\eta}(J(\nu))/N! J(\nu)_D^{[N]}$ for each $N \ge 1$, and its 
differentials in the $\nu$-directions are alternating sums of the maps induced via
functoriality from the $\nu+2$ inclusions $P(\nu) \to P(\nu+1)$ (which carry $J(\nu)$ into $J(\nu+1)$ and thereby give 
rise to $\nu+2$ maps $D_{P(\nu),\eta}(J(\nu)) \to D_{P(\nu+1),\eta}(J(\nu+1))$
of divided power $A$-algebras).  The reason for the sign $(-1)^{\mu}$ is to ensure we get anti-commuting squares as in the definition of
``double complex''. 

Along the edges we have 
$$E^{\ast,0} = \mathscr{F} \widehat{\otimes} \widehat{\Omega}^*_{D_{P,\eta}(J)/A,\gamma},\,\,\,
E^{0,\ast} = {\rm{\mbox{\v{C}}A}}(P \twoheadrightarrow R, \mathscr{F}),$$ 
so it suffices to show that the total complex ${\rm{Tot}}(E^{\ast,\ast})$ is quasi-isomorphic to its two edges along the edge maps.

\subsection{Analysis of column complexes}

For the quasi-isomorphism with $E^{0,\ast} = {\rm{\mbox{\v{C}}A}}(P \twoheadrightarrow R, \mathscr{F})$, 
it suffices to show that for each $\mu > 0$ the complex $E^{\mu,\ast}$ is exact (for which it is harmless to remove the multiplication by
$(-1)^{\mu}$ on its differentials).

Since $\Omega^{\mu}_{D_{P,\eta}(J)/A, \gamma} = \Omega^{\mu}_{P/A} \otimes_P D_{P,\eta}(J)$ \cite[Tag 07HW]{SP}, we have
$$\Omega^{\mu}_{(D_{P,\eta}(J)/N! J_D^{[N]})/A, \gamma} =
\Omega^{\mu}_{D_{P,\eta}(J)/A, \gamma}/(N! J_D^{[N]} \Omega^{\mu}_{D_{P,\eta}(J)/A,\gamma} + N! J_D^{[N-1]} {\rm{d}}J \wedge
\Omega^{\mu-1}_{D_{P,\eta}(J)/A,\gamma}).$$
To show for $\mu > 0$ that the complex of $A$-modules
$E^{\mu,\ast}$ is homotopic to 0,  we need to naturally identify
$E^{\mu,\ast}$ with an inverse limit of a slightly different inverse system, in which
we use the quotients of $\Omega^{\mu}_{D_{P(\ast),\eta}(\ast)(J(\ast))/A,\gamma(\ast)}$
by $N! J(\ast)_D^{[N]}$. 

More specifically, we claim that the natural map
\begin{equation}\label{Fomegamap}
\varprojlim_{N \ge 1} \mathscr{F}_{D_{P,\eta}(J)/N! J_D^{[N]}} \otimes
\Omega^{\mu}_{D_{P,\eta}(J)/A,\gamma}/N! J_D^{[N]} \Omega^{\mu}_{D_{P,\eta}(J)/A,\gamma}
\to \varprojlim_{N \ge 1} \mathscr{F}_{D_{P,\eta}(J)/N! J_D^{[N]}} \otimes
\Omega^{\mu}_{(D_{P,\eta}(J)/N! J_D^{[N]})/A,\gamma}
\end{equation}
is an isomorphism (and so also with $(P, J)$ replaced by $(P(\nu), J(\nu))$ for all $\nu$).
Letting $\mathscr{F}_N := \mathscr{F}_{D_{P,\eta}(J)/N! J_D^{[N]}}$ and 
$\Omega^{\mu}_N := \Omega^{\mu}_{D_{P,\eta}(J)/A,\gamma}/N! J_D^{[N]} \Omega^{\mu}_{D_{P,\eta}(J)/A,\gamma}$
to simplify notation, the map (\ref{Fomegamap}) is 
\begin{equation}\label{Fomega}
\varprojlim \mathscr{F}_N \otimes \Omega^{\mu}_N \to
 \varprojlim \mathscr{F}_N \otimes \Omega^{\mu}_N/(N! J_D^{[N-1]} {\rm{d}}J \wedge \Omega^{\mu-1}_N).
 \end{equation}
 Since $\mathscr{F}_N \to \mathscr{F}_{N-1}$ is surjective (due to the {\em crystal} property), each transition map
 $$\mathscr{F}_N \otimes \Omega^{\mu}_N \to \mathscr{F}_{N-1} \otimes \Omega^{\mu}_{N-1}$$
 is surjective and (since $(N-1)!$ divides $N!$) has a unique factorization
 \begin{equation}\label{Ffactor}
\mathscr{F}_N \otimes \Omega^{\mu}_N \twoheadrightarrow 
 \mathscr{F}_N \otimes \Omega^{\mu}_N/(N! J_D^{[N-1]} {\rm{d}}J \wedge \Omega^{\mu-1}_N)
  \twoheadrightarrow \mathscr{F}_{N-1} \otimes \Omega^{\mu}_{N-1}
  \end{equation}
  whose first map underlies (\ref{Fomega}).  Thus, the two inverse systems of surjections in (\ref{Ffactor}) 
  interlace each other and hence (\ref{Fomega}) is an isomorphism. 
  
  Applying the preceding for each $(P(\ast), J(\ast))$ in place of $(P, J)$, 
to show $E^{\mu, \ast}$ is homotopic to 0 for each $\mu > 0$ we consider the 
following alternative inverse limit expression for $E^{\mu,\ast}$:
\begin{eqnarray*}
\varprojlim_{N \ge 1} \mathscr{F}_{D_{P(\ast),\eta}(J(\ast))/N! J(\ast)_D^{[N]}} 
\otimes_{D_{P(\ast),\eta}(J(\ast))/N! J(\ast)_D^{[N]}} \Omega^{\mu}_{D_{P(\ast),\eta}(J(\ast))/A, \gamma(\ast)}/N! J(\ast)_D^{[N]} 
\Omega^{\mu}_{D_{P(\ast),\eta}(J(\ast))/A, \gamma(\ast)}\\
= \varprojlim_{N \ge 1} \mathscr{F}_{D_{P(\ast),\eta}(J(\ast))/N! J(\ast)_D^{[N]}} 
\otimes_A \Omega^{\mu}_{P(\ast)/A},
\end{eqnarray*}
the equality using the
natural isomorphisms $\Omega^{\mu}_{D_{P(\ast),\eta}(J(\ast))/A,\gamma(\ast)} = D_{P(\ast),\eta}(J(\ast)) \otimes_{P(\ast)} \Omega^{\mu}_{P(\ast)/A}$
(see \cite[3.20(1)]{bogus} and then \cite[Tag 07HW]{SP} with $\mu=1$, and then pass to $\mu$th exterior powers).

We shall first show that the cosimplicial module $\Omega^1_{P(\ast)/A}$ over the cosimplicial $A$-algebra $P(*)$ is homotopic to 0 (over $P(*)$), so then 
by \cite[Tag 07KQ, Part (2)]{SP} such homotopy to 0 is inherited by the cosimplicial $P(*)$-module $\Omega^{\mu}_{P(\ast)/A}$'s
for fixed $\mu > 0$, and 
then for each $N \ge 1$ by  (the proof of) \cite[Tag 07KQ, Part (1)]{SP} the complex
$$\mathscr{F}_{D_{P(\ast),\eta}(J(\ast))/N! J(\ast)_D^{[N]}} \otimes_A \Omega^{\mu}_{P(\ast)/A}$$
for fixed $\mu>0$ is homotopic to 0 over $A$ with homotopies that are compatble with change in $N$.
Hence, the inverse limit $E^{\mu, \ast}$ over $N$ would be homotopic to 0 for $\mu > 0$, as desired.
The merit of having reduced our task
to showing $\Omega^1_{P(\ast)/A}$ is homotopic to 0 over $P(*)$ is that this has nothing to do with
the crystal $\mathscr{F}$ nor with the quotient $P/J=R$ or the ideals $J(\ast) = \ker(P(\ast) \to R)$.

Using an \'etale $A$-algebra map $g:A[\underline{t}]  := A[t_1, \dots, t_d] \to P$, we have the natural isomorphisms
$$P(\ast) \otimes_{A[\underline{t}](\ast)} \Omega^1_{A[\underline{t}]/A} \simeq \Omega^1_{P(\ast)/A}$$
as cosimplicial $P(*)$-modules. 
By \cite[Tag 07KQ, Part (3)]{SP}, for any map $B_{*} \to C_{*}$ of cosimplicial $A$-algebras
and any cosimplicial $A$-module $M_{*}$ over $B_{*}$, the $A$-linear complex $M_{*} \otimes_{B_{*}} C_{*}$ is homotopic
to 0 if $M_{*}$ is.  Thus, we are reduced to the case $P = A[\underline{t}]$ is a polynomial ring.
This special case is treated in the proof of \cite[Tag 07L9]{SP}.  

\subsection{Analysis of row complexes}\label{rowsec}

Now we fix $\nu \ge 0$ and consider the rows 
$E^{\ast,\nu}$.  The map $E^{\ast,\nu} \to E^{\ast,\nu+1}$ is given in each degree $\ast$
as an alternating sum of maps arising from pullback along the $\nu+2$ inclusions $\iota_j^{\nu}:P(\nu) \to P(\nu+1)$
for $0 \le j \le \nu+1$. 
The effect of each induces the {\em same} map in homology 
$${\rm{H}}^i(\iota_j^{\nu}) = f^i_{\nu}: {\rm{H}}^i(E^{\ast,\nu}) \to {\rm{H}}^i(E^{\ast,\nu+1})$$
(using standard co-degeneracies $P(\nu+1) \to P(\nu)$ to make homotopies between the maps of complexes),
so the alternating sums are 0 or that common map, depending on the parity of $\nu$. 
We will show that the maps $f^i_{\nu}$ are isomorphisms for all $i$, so then the map
$$\sum_{j=0}^{\nu+1} (-1)^j {\rm{H}}^{\mu}(\iota_j^{\nu}) =
\sum_{j=0}^{\nu+1} (-1)^j f^{\mu}_{\nu}:
{\rm{H}}^{\mu}(E^{\ast,\nu}) \to {\rm{H}}^{\mu}(E^{\ast,\nu+1})$$
would vanish for all $\mu$ when $\nu$ is even (e.g., $\nu=0$) and be a quasi-isomorphism for all $\mu$ when $\nu$ is odd. 
Hence, setting $E^{\mu,-1} := 0$ for all $\mu$, the homologies 
$${\rm{E}}_2^{\mu,\nu} = \ker({\rm{H}}^{\mu}(E^{\ast,\nu}) \to {\rm{H}}^{\mu}(E^{\ast,\nu+1}))/{\rm{im}}({\rm{H}}^{\mu}(E^{\ast,\nu-1}) \to
{\rm{H}}^{\mu}(E^{\ast,\nu}))$$
would vanish for all $\nu > 0$ and ${\rm{E}}_2^{\mu,0} = {\rm{H}}^{\mu}(E^{\ast,0})$, so 
the edge map from $E^{\ast,0}$ to the total complex would be a quasi-isomorphism (as desired).

To prove $f^i_{\nu}$ is an isomorphism for all $i$ and all $\nu \ge 0$, it suffices to show for each $\nu \ge 0$ that
that composite map $E^{\ast,0} \to E^{\ast,\nu}$ arising from a choice of inclusion $P = P(0) \to P(\nu) = P^{\otimes_A (\nu +1)}$
is a quasi-isomorphism; we'll use inclusion along the 0th tensor factor
for specificity, and of course we can assume $\nu > 0$.  
The key is to rewrite the $D_{P,\eta}(J)$-algebra $D_{P(\nu),\eta}(J(\nu))$ as a divided power envelope
of a smooth $D_{P,\eta}(J)$-algebra with a section as in Proposition \ref{newpd}, so we can then apply Proposition \ref{dqt} 
(with $D_{P,\eta}(J)$ in the role of $R$).  More specifically,
for $\nu > 0$ let $Q(\nu)$ be the smooth $D_{P,\eta}(J)$-algebra $D_{P,\eta}(J) \otimes_P P(\nu)$
and let $I(\nu)$ be the kernel of the map $Q(\nu) \twoheadrightarrow D_{P,\eta}(J)$ induced by 
scalar extension of the natural map $P(\nu) \twoheadrightarrow P$.
Then Proposition \ref{newpd} provides an isomorphism
$$D_{Q(\nu),0}(I(\nu)) \simeq D_{P(\nu),\eta}(J(\nu))$$ as divided power $D_{P,\eta}(J)$-algebras, 
compatibly with divided powers on $I(\nu)_D + J_D D_{Q(\nu)}(I(\nu))$ and $J(\nu)_D$.

For $n \ge 1$ and $\nu \ge 0$, define
$$D(\nu)_n = D_{P(\nu),\eta}(J(\nu))/n! J(\nu)_D^{[n]}, \,\,\,
M(\nu)_n = \mathscr{F}(D(\nu_n)),\,\,\,D_n = D(0)_n,\,\,\,
M_n = M(0)_n,$$
so $E^{\ast, \nu} = \varprojlim_{n \ge 1} M(\nu)_n \otimes_{D(\nu)_n} \Omega^{\ast}_{D(\nu)_n/A, \gamma(\nu)}$
(made into a complex using the connections at each finite level arising from the {\em crystal} nature of $\mathscr{F}$).
We want to show that for a {\em fixed} $\nu > 0$ and the inclusion $P \to P(\nu)$ into the 0th tensor factor, the natural map
$$ \varprojlim_{n \ge 1} M_n \otimes_{D_n} \Omega^{\ast}_{D_n/A, \gamma} \to
\varprojlim_{n \ge 1} M(\nu)_n \otimes_{D(\nu)_n} \Omega^{\ast}_{D(\nu)_n/A, \gamma(\nu)}.$$
is a quasi-isomorphism.  

As we saw via Proposition \ref{newpd}, we can identify the divided power $A$-algebra $D_{P(\nu),\eta}(J(\nu))$
with $D_{Q,0}(I)$ for a smooth $D_{P,\eta}(J)$-algebra $Q$ equipped with a section $Q \twoheadrightarrow D_{P,\eta}(J)$
having kernel $I$ and admitting an \'etale $D_{P,\eta}(J)$-map $\Spec(Q) \to \mathbf{A}^m_{D_{P,\eta}(J)}$ (since each $P(\nu)$ admits \'etale coordinates
over $P$, due to the hypothesis that $P$ admits \'etale coordinates over $A$); the divided powers $\delta$ are on the ideal
$K := J_D D_{Q,0}(I) + I_D \subset D_{Q,0}(I)$ corresponding to $J(\nu)$ extending the divided powers $\gamma$ on $J_D \subset D_{P,\eta}(J)$. 
By Proposition \ref{dqt}, if $L \subset D_{P,\eta}(J)\langle t_1,\dots,t_m \rangle$ is the ideal pd-generated by $J_D$ and the $t_j$'s then we have 
\begin{equation}\label{magic}
D_{P,\eta}(J)\langle t_1,\dots, t_m \rangle/n! L^{[n]} \simeq D_{Q,0}(I)/n!K^{[n]}.
\end{equation} 
as divided power algebras over $D_{P,\eta}(J)$ compatibly with change in $n$.  

Thus, our task can be reformulated as showing that the natural map
\begin{equation}\label{omegaqism}
 \varprojlim_{n \ge 1} M_n \otimes_{D_n} \Omega^{\ast}_{D_n/A, \gamma} \to
\varprojlim_{n \ge 1} M(\nu)_n \otimes_{D_{P,\eta}(J) \langle t_1, \dots, t_m \rangle/n!L^{[n]}} \,
\Omega^{\ast}_{(D_{P,\eta}(J)\langle t_1,\dots,t_m\rangle/n! L^{[n]})/A, \delta}.
\end{equation}
is a quasi-isomorphism. To get a feeling for this, let's consider a special case (which will also be used in the treatment of the general case).

\begin{example}\label{basicdR} Suppose $\mathscr{F} = \mathscr{O}^{\rm{cris}}_{R/A}$,
so via the isomorphism (\ref{Fomega}) we can identify (\ref{omegaqism}) with the map of
pd-adic completions
$$\widehat{\Omega}^*_{D_{P,\eta}(J)/A,\gamma} \to \widehat{\Omega}^*_{D_{Q,0}(I)/A, \delta}.$$
Assume $J=0$ (so $P=R$) and that $R=A$, so this map is 
$$A[0] \to \widehat{\Omega}^*_{A\langle t_1, \dots, t_m\rangle/A, \gamma}.$$
We are going to show that this is a homotopy equivalence in a manner
that interacts well with pd-adic topologies on the terms $\Omega^{\mu}_{A\langle t_1, \dots, t_m \rangle/A, \gamma}$.

Avoiding pd-adic completion, define the natural maps of $A$-linear complexes
$$f:A[0] \to \Omega^*_{A\langle t_1,\dots,t_m \rangle/A,\gamma},\,\,\,
h:\Omega^*_{A \langle t_1,\dots, t_m \rangle/A,\gamma} \to A[0]$$
for which the first is given in degree 0 by the inclusion $A \to A \langle t_1,\dots,t_m \rangle$
and the second is given in degree 0 by ``projection to the constant term''; these are both maps of complexes.
Clearly $h \circ f$ is the identity on $A[0]$, and we claim that the endomorphism $f \circ h$ of
$\Omega^*_{A \langle t_1, \dots, t_m \rangle/A,\gamma}$ is
homotopic to the identity. 

Letting $\gamma_j$ denote the divided power structure
on $(t_j)_D \subset D_{A[t_j]}((t_j)) =: A \langle t_j \rangle$, 
the isomorphism $$A \langle t_1 \rangle \otimes_A  \dots \otimes_A A \langle t_m \rangle \simeq A \langle t_1, \dots, t_m \rangle$$
of divided power $A$-algebras yields a natural isomorphism of 
 $A$-linear complexes
 $$ \Omega^*_{A \langle t_1 \rangle/A, \gamma_1} \otimes_A \dots \otimes_A \Omega^*_{A \langle t_m \rangle/A, \gamma_m} \simeq
 \Omega^*_{A\langle t_1, \dots, t_m \rangle/A, \gamma}.$$
 This identifies $f$ and $h$ above with tensor products of the analogous maps for the single-variable cases $A\langle t_j \rangle$
 ($1 \le j \le m$), so it suffices to treat the single-variable case.

The complex $\Omega^*_{A \langle t \rangle/A, \gamma}$ is a 2-term complex (in degrees 0 and 1) given by
$$A \langle t \rangle \to A \langle t \rangle {\rm{d}}t$$
defined by $f = \sum a_j t^{[j]} \mapsto {\rm{d}}f = \sum_{j \ge 1} a_j t^{[j-1]} {\rm{d}}t$.
We define the $A$-linear $k: \Omega^1_{A \langle t \rangle/A, \gamma} \to A \langle t \rangle$
by $k(\sum a_j t^{[j]} {\rm{d}}t) = \sum a_j t^{[j+1]}$, so ${\rm{d}}\circ k: \Omega^1_{A \langle t \rangle/A, \gamma} \to \Omega^1_{A \langle  t \rangle/A, \gamma}$
is the identity and $k \circ {\rm{d}}: A \langle t \rangle \to A \langle t \rangle$ is ``subtract off the constant term''. 
But $f \circ h$ in degree 0 is projection to the constant term, so $k \circ {\rm{d}} = 1 - (f \circ h)$ in degree 0.
Thus, $k$ defines a homotopy between $f \circ h$ and the identity on $\Omega^*_{A \langle t \rangle/A, \gamma}$.

Returning to the general multivariable setting, we claim that the $A$-linear $h$ and the $A$-linear homotopy $k$ betwen $f \circ h$ and the identity is
continuous for the termwise pd-adic topologies. Granting this, if $\widehat{f}$ and $\widehat{h}$
denote the induced maps between termwise pd-adically completed complexes
then $\widehat{k}$ induces such a homotopy between $\widehat{f} \circ \widehat{h}$
and the identity, so $\widehat{f}$ has a homotopy-inverse $\widehat{h}$.
The pd-adic continuity of $h$ is clear (this only has content in degree 0),
so let's focus on $k$.  In the single-variable setting, for the ideal $I = (t)_D \subset A \langle t \rangle$
we note that $k(I^{[N]} \Omega^1_{A \langle t \rangle/A, \gamma}) \subset
I^{[N+1]} A \langle t \rangle \subset I^{[N]} A \langle t \rangle$ for all $N \ge 0$.  Also, 
if $(R, J)$ and $(R', J')$ are $A$-flat divided power algebras
over $A$ and we equip $J'' := J \otimes_A R' + R \otimes_A J' \subset R \otimes_A R'$
with its evident divided power structure $\gamma''$ then for any $R$-module $M$ and $R'$-module $M'$
we have
$${J''}^{[N]} (M \otimes_A M') = \sum_{0 \le m \le N} (J^{[m]} \otimes {J'}^{[N-m]}) (M \otimes_A M').$$
Thus, if $T: M_1 \to M_2$ and $T':M'_1 \to M'_2$ are linear maps that satisfy
$T(J^{[n]} M_1) \subset J^{[n]} M_2$ and $T({J'}^{[n']} M'_1) \subset {J'}^{[n']} M'_2$ for all $n, n'$
then the $R \otimes_A R'$-linear map
$$T \otimes T': M_1 \otimes_A M'_1 \to M_2 \otimes_A M'_2$$
carries ${J''}^{[N]} (M_1 \otimes_A M'_1)$ into ${J''}^{[N]} (M_2 \otimes_A M'_2)$ for all $N \ge 0$. 
Applying this repeatedly to higher-order tensor products, we see that for the ideal $I = (t_1, \dots, t_m)_D \subset
A \langle t_1, \dots, t_m \rangle$ and all $\mu > 0$, the $A$-linear map
$$k_{\mu}: \Omega^{\mu}_{A \langle t_1, \dots, t_m \rangle/A, \gamma} \to \Omega^{\mu-1}_{A \langle t_1, \dots, t_m \rangle/A, \gamma}$$
carries $I_D^{[N]} \Omega^{\mu}_{A \langle t_1, \dots, t_m \rangle/A, \gamma}$ into
$I_D^{[N]} \Omega^{\mu-1}_{A \langle t_1, \dots, t_m \rangle/A, \gamma}$.
\end{example}

\begin{remark}\label{caseproof} The method in Example \ref{basicdR} can be used to directly treat
the case of (\ref{omegaqism}) with the trivial crystal $\mathscr{F} = \mathscr{O}^{\rm{cris}}_{R/A}$
but allowing any $J$ and $P$ (e.g., $P$ not necessarily a polynomial ring over $A$), 
though this extra generality is not useful for treating more general crystals
because the intervention of the connection associated to $\mathscr{F}$ and tensor products
over $D_n$'s and $D(\nu)_n$'s (rather than over $A$) are 
genuine complications that don't arise for the trivial crystal (and are overcome using a delicate balancing act between 
filtrations and pd-adic topologies).  Nonetheless, we now provide
a proof for the case of the trivial crystal allowing any $J$ and general $P$  in case the reader may want to see it
worked out directly. 

Let $(A, I_0, \eta)$ be a divided power ring, and $R$ an $A$-algebra equipped with a divided power
structure $(J, \delta)$ compatible with $\eta$ (such as $R = D_{P,\eta}(J)$
with the ideal $J_D$), and $Q$ a smooth $R$-algebra admitting
a section $\sigma: \Spec(R) \hookrightarrow \Spec(Q)$ and an \'etale $R$-map $h:\Spec(Q) \to \mathbf{A}^m_R$.
If $I$ is the ideal of $\sigma$ in $Q$ then $h^{-1}(0)$ contains $\Spec(Q/I)$ as a clopen subscheme, 
so $I/I^2$ is $R$-free due to the \'etaleness of $h$ and the freeness of the cotangent space along the zero-section of affine space.
We want to show that the natural map of pd-adic completions
\begin{equation}\label{omegahat}
\widehat{\Omega}^*_{R/A,\gamma} \to \widehat{\Omega}^*_{D_{Q,0}(I)/A, \delta}
\end{equation} 
is a quasi-isomorphism (which would imply that (\ref{omegaqism}) is a quasi-isomorphism when $\mathscr{F} = \mathscr{O}^{\rm{cris}}_{R/A}$). 

As with the isomorphism (\ref{Fomega}), the termwise pd-adically completed complex $\widehat{\Omega}^*_{D_{Q,0}(I)/A}$ is identified with the complex
$$\varprojlim_{N \ge 1} \Omega^*_{(D_{Q,0}(I)/N! I_D^{[N]})/A,\gamma},$$
and by Proposition \ref{dqt} (which applies since $I/I^2$ is $R$-free) this is identified with 
$$\varprojlim_{N \ge 1} \Omega^*_{(R\langle t_1,\dots,t_m \rangle/N! (t_1,\dots,t_m)^{[N]})/A,\gamma},$$
which in turn is identified with the termwise pd-adically completed complex
$\widehat{\Omega}^*_{R \langle t_1,\dots,t_m \rangle/A, \gamma}$.
In this way, we are reduced to the case $Q = R[t_1,\dots,t_m]$ and $J = (t_1, \dots, t_m)$.
For this special case, the natural $A$-linear map analogous to (\ref{omegahat}) without pd-adic completions is
\begin{equation}\label{omegaRA}
f:\Omega^*_{R/A,\gamma} \to \Omega^*_{R\langle t_1, \dots, t_m \rangle/A,\delta}.
\end{equation}
We shall build an $A$-linear map $h$ in the other direction
so that $h \circ f$ is the identity and
$f \circ h$ is homotopic to the identity via a homotopy $k$ such that $k$ and $h$ are (like $f$) continuous for the termwise pd-adic topologies
and hence pass to the termwise pd-adic completions (so (\ref{omegahat}) is a homotopy equivalence, and hence a quasi-isomorphism). 

Since $R \langle t_1, \dots, t_m \rangle = R \otimes_A A \langle t_1, \dots, t_m \rangle$, so 
$$\Omega^1_{R \langle t_1,\dots,t_m\rangle/A, \delta} = (R \otimes_A \Omega^1_{A \langle t_1, \dots, t_m \rangle, \delta}) \oplus
(A \langle t_1, \dots, t_m \rangle \otimes_A \Omega^1_{R/A, \gamma}),$$
we have 
$$\Omega^*_{R \langle t_1,\dots,t_m \rangle/A, \delta} =
\Omega^*_{A \langle t_1, \dots, t_m \rangle/A, \delta} \otimes_A \Omega^*_{R/A,\gamma}$$
(a tensor product of $A$-linear complexes).  Thus, (\ref{omegaRA}) is obtained as the tensor product
of the $A$-linear complex $\Omega^*_{R/A, \gamma}$ against the map of $A$-linear complexes
\begin{equation}\label{augment}
f:A[0] \to \Omega^*_{A\langle t_1, \dots, t_m\rangle /A, \delta}.
\end{equation}
By Example \ref{basicdR}, the map (\ref{augment}) has a homotopy inverse $h$, where for $\mu > 0$
the homotopy between $f \circ h$ and the identity carries $J_D^{[N]} \Omega^{\mu}_{A\langle t_1, \dots, t_m\rangle /A, \delta}$
into $J_D^{[N]} \Omega^{\mu-1}_{A\langle t_1, \dots, t_m\rangle /A, \delta}$ for all $N \ge 1$. 
Thus, applying the tensor product against $\Omega^*_{R/A,\gamma}$ preserves
$(1 \otimes f) \circ (1 \otimes h)$
having a pd-adically continuous homotopy to the identity, so (\ref{omegahat}) has a homotopy inverse
and hence is a quasi-isomorphism. 
\end{remark}

\subsection{Filtration arguments with pd-adic completions}

Returning to the treatment of the general quasi-isomorphism problem for (\ref{omegaqism}) for an arbitrary but {\em fixed}
$\nu > 0$, we need to define some filtrations
so that we can eventually disentangle the crystal and its connection from suitable de Rham differentials over $A$. Inspired
by the treatment in \cite[Tag 07LD]{SP} for the case of $p$-adic completions for crystals in $p$-adic crystalline cohomology, 
we will define filtrations on the complexes on both sides of (\ref{omegaqism}) that in each homological degree terminate after finitely many steps 
and such that the induced maps between successive quotient complexes are quasi-isomorphisms; that will clearly do the job. 
The pd-adic topologies in our setting are harder to work with than $p$-adic topologies, but we'll  push through a
filtration method anyway. 

On the left side of (\ref{omegaqism}), for $k \ge 0$ we define the subcomplexes $F^k = \varprojlim_{n \ge 1} F^k_n$ where
$$F^k_n := \sigma_{\ge k}(M_n \otimes_{D_n} \Omega^*_{D_n/A,\gamma})$$
is the naive truncation in degrees $\ge k$ (i.e., replace all terms in degree $< k$ with 0). 
By consideration of the Leibnitz rule for connections, 
$$F^k_n/F^{k+1}_n \simeq M_n \otimes_{D_n} \Omega^k_{D_n/A,\gamma}[-k]$$
and the quotient complex $F^k/F^{k+1}$ is the one-term complex consisting of 
$$\varprojlim_{n \ge 1} M_n \otimes_{D_n} \Omega^k_{D_n/A,\gamma}$$
placed in degree $k$. 

To define the appropriate filtration $G^k$ on the right side of (\ref{omegaqism}), we use the behavior of divided power $\Omega^1$'s with respect
to quotient algebras (modulo a divided power ideal) and the isomorphism
$$\Omega^*_{D_{P,\eta}(J) \langle t_1, \dots, t_m \rangle/A, \delta} \simeq
\Omega^*_{D_{P,\eta}(J)/A, \gamma} \otimes_A \Omega^*_{A \langle t_1, \dots, t_m \rangle/A, \delta}$$
to identify the complex
$\Omega^*_{D_{P,\eta}(J)\langle t_1, \dots, t_m \rangle/A, \delta}$ with the tensor product complex
$$\Omega^*_{D_{P,\eta}(J)/A, \gamma} \otimes_A \Omega^*_{A \langle t_1, \dots, t_m \rangle/A, \delta}.$$
Let $\langle \underline{t} \rangle_+ \subset A\langle t_1, \dots, t_m \rangle$ be the ideal pd-generated by the $t_j$'s,
and let 
$$B_n := A \langle t_1, \dots, t_m \rangle/n! \langle \underline{t} \rangle_+^{[n]}.$$
Bringing in the connection arising from the $M(\nu)_n$'s
and using the {\em crystal property} $M(\nu)_n = M_n \otimes_{D_n} D(\nu)_n$
with $D(\nu)_n$ given by (\ref{magic}), the term at level $n$ on the right side of (\ref{omegaqism}) is 
identified with the quotient of the complex
$$(M_n \otimes_{D_n} \Omega^*_{D_n/A,\gamma}) \otimes_A \Omega^*_{B_n/A, \delta}$$
(using the connection on $M_n \otimes_{D_n} \Omega^*_{D_n/A,\gamma}$
and the de Rham differentials on $\Omega^*_{B_n/A, \delta}$) modulo the subcomplex given by the image of 
\begin{eqnarray*}
n!(\sum_{0 < m < n} J_D^{[m]}(M_n \otimes_{D_n} \Omega^*_{D_n/A, \gamma}) \otimes_A
\langle \underline{t} \rangle_+^{[n-m]}(\Omega^*_{B_n/A, \delta})) +\\
n!(J_D^{[m-1]}(M_n \otimes_{D_n} ({\rm{d}}J \wedge \Omega^{\ast-1}_{D_n/A,\gamma})) \otimes_A
\langle \underline{t} \rangle_+^{[n-m]}\Omega^{\ast}_{B_n/A,\delta}) + \\
n!(J_D^{[m]}(M_n \otimes_{D_n} \Omega^*_{D_n/A, \gamma}) \otimes_A
\langle \underline{t} \rangle_+^{[n-1-m]}({\rm{d}}\underline{t} \wedge \Omega^{*-1}_{B_n/A, \delta}))
\end{eqnarray*}

Fortunately the preceding unpleasant subcomplex is not something we'll need to work with.  The reason is that 
for $0 < m < n$ at least one of $m-1$ or $n-1-m$ is $\ge n' := \lfloor n/2 \rfloor  - 1$, so the natural surjective map
$$(M_n \otimes_{D_n} \Omega^*_{D_n/A,\gamma}) \otimes_A \Omega^*_{B_n/A, \delta} \twoheadrightarrow
(M_{n'} \otimes_{D_{n'}} \Omega^*_{D_{n'}/A,\gamma}) \otimes_A \Omega^*_{B_{n}/A, \delta}$$
factors through the quotient map onto the $n$th term on the right side of (\ref{omegaqism}).  Since inverse systems that interweave
through each other have the same inverse limit, we see that 
the inverse limit on the right side of (\ref{omegaqism}) coincides with 
\begin{equation}\label{betterlim}
\varprojlim_{n \ge 1} (M_n \otimes_{D_n} \Omega^*_{D_n/A,\gamma}) \otimes_A \Omega^*_{B_n/A, \delta}.
\end{equation}
Recalling that 

We use this description to define a filtration by subcomplexes $G^k = \varprojlim G^k_n$
where $$G^k_n \subset \sigma_{\ge k} (M_n \otimes_{D_n} \Omega^*_{D_n/A,\gamma}) \otimes_A \Omega^*_{B_n/A, \delta}.$$
(Note that for this to make sense, we are using that the complex at level $n$ in (\ref{betterlim}) is a tensor product of
complexes; i.e., we have decoupled the connection from the de Rham complex $\Omega^*_{B_n/A, \delta}$.) 

From the definitions it is clear that (\ref{omegaqism}) carries $F^k$ into $G^k$, so it suffices to show
that $F^k/F^{k+1} \to G^k/G^{k+1}$ is a quasi-isomorphism for all $k \ge 0$.
Since the transition maps $G^k_{n+1} \to G^k_n$ are surjective, the natural map of complexes 
$$G^k/G^{k+1} \to \varprojlim_{n \ge 1} (G^k_n/G^{k+1}_n)$$
is an isomorphism.  By right exactness of tensor products, 
$$G^k_n/G^{k+1}_n \simeq (M_n \otimes_{D_n} \Omega^k_{D_n/A,\gamma}) \otimes_A \Omega^*_{B_n/A,\delta}[-k];$$
i.e., as a complex this is the tensor product of the $A$-module $M_n \otimes_{D_n} \Omega^k_{D_n/A,\gamma}$
against the shifted $A$-linear de Rham complex $\Omega^{*}_{B_n/A,\delta}[-k]$.   The same argument that showed (\ref{Fomega}) is an isomorphism 
shows that the natural map
\begin{eqnarray*}
\varprojlim_{n \ge 1} (M_n \otimes_{D_n} \Omega^k_{D_n/A,\gamma}) \otimes_A \Omega^{*}_{A\langle t_1,\dots,t_m \rangle/A,\delta}[-k]/
n! \langle \underline{t} \rangle_+^{[n]} \Omega^{*}_{A\langle t_1,\dots,t_m \rangle/A, \delta}[-k] &\to& 
\varprojlim_{n \ge 1} (G^k_n/G^{k+1}_n) \\ &=& G^k/G^{k+1}
\end{eqnarray*} 
is an isomorphism.

Thus, the map $F^k/F^{k+1} \to G^k/G^{k+1}$ is identified with the inverse limit over $n \ge 1$ of the maps
$$M_n \otimes_{D_n} \Omega^k_{D_n/A,\gamma}[-k] \to
 (M_n \otimes_{D_n} \Omega^k_{D_n/A,\gamma}) \otimes_A \Omega^{*}_{A\langle t_1,\dots,t_m \rangle/A,\delta}[-k]/
n! \langle \underline{t} \rangle_+^{[n]} \Omega^*_{A\langle t_1,\dots,t_m \rangle/A, \delta}[-k]$$
is a quasi-isomorphism.   Undoing the $k$-fold shift, this is the tensor product $M_n \otimes_{D_n} \Omega^k_{D_n/A, \gamma}$ 
(over $A$) against the $n$th term in the inverse system
defining the termwise pd-adic completion of
$\Omega^*_{A\langle t_1, \dots, t_m \rangle/A, \delta}$.
Noting that the transition maps
$$M_{n+1} \otimes_{D_{n+1}} \Omega^k_{D_{n+1}/A,\gamma} \to
M_n \otimes_{D_n} \Omega^k_{D_n/A}$$
are surjective, our task is an instance of the following more general problem:
if $\{H_n\}$ is an inverse system of $A$-modules with surjective transition maps $H_{n+1} \to H_n$ then for $H := \varprojlim_{n \ge 1} H_n$
we claim that the natural map
\begin{equation}\label{Hmap}
H[0] = \varprojlim_{n \ge 1} H_n[0] \to \varprojlim_{n \ge 1} H_n \otimes_A \Omega^{*}_{A\langle t_1,\dots,t_m \rangle/A,\delta}/
n! \langle \underline{t} \rangle_+^{[n]} \Omega^*_{A\langle t_1,\dots,t_m \rangle/A, \delta}
\end{equation}
is a quasi-isomorphism.  We will show that this is even a homotopy equivalence.

Since the transition maps $H_{n+1} \to H_n$ is surjective, each map $H \to H_n$ is surjective.
Thus, $H$ is separated and complete for the $A$-linear topology defined by the submodules
$\ker(H \twoheadrightarrow H_n)$.  We can thereby identify the target of (\ref{Hmap})
with the complex
$$H \widehat{\otimes}_A \widehat{\Omega}^*_{A\langle t_1, \dots, t_m \rangle/A, \delta}$$
given termwise by completed tensor products over $A$.  
Such completed tensor products are {\em functorial} over $A$ in continuous maps of $A$-modules
in either tensor factor, so the continuous homotopy realizing
$$A[0] \to \widehat{\Omega}^*_{A \langle t_1, \dots, t_m \rangle/A, \delta}$$
as a continuous homotopy equivalence at the end of Example \ref{basicdR} can be used
to show that (\ref{Hmap}) is also a homotopy equivalence of complexes of (topological) $A$-modules.
This completes the proof of Theorem \ref{dPoincare}.

\medskip\medskip

\section{The affine crystalline site}\label{affzarsite}

Up to now, we have studied the the category ${\rm{Cris}}(R/A)$ for a
divided power ring $(A, I_0, \eta)$ and a finitely generated $A$-algebra $R$ for which $\eta$ extends to $I_0 R$, 
without any use of the Zariski topology.  In effect, we have been considering ${\rm{Cris}}(R/A)$
as a site with the indiscrete topology (the only covers of an object are isomorphisms).
As a warm-up to globalization beyond the affine case,
we now consider a Zariski topology on ${\rm{Cris}}(R/A)$. 

\subsection{Relation of Zariski and indiscrete topologies}\label{relsec}

For any object $(\Spec(B), J, \delta)$ in ${\rm{Cris}}(R/A)$ (i.e., $B$ is an $A$-algebra equipped with a surjection $B \twoheadrightarrow R$
whose kernel $J$ is equipped with divided powers $\delta$ compatible with $\eta$
such that $m J^{[n]}=0$ for some integers $m, n \ge 1$), a {\em Zariski cover} consists of objects
$(\Spec(B_i), J_i, \delta^i)$ where $\Spec(B_i) \subset \Spec(B)$ is an open subscheme and $(J_i, \delta^i)$ is the restriction of $(J, \delta)$. 
We denote by ${\rm{Cris}}(R/A)_{\rm{Zar}}$ the site given by the category ${\rm{Cris}}(R/A)$ equipped with the Zariski topology,
and ${\rm{Cris}}(R/A)_{\rm{ind}}$ denotes the site given by  the category ${\rm{Cris}}(R/A)$ equipped with the indiscrete topology. 
The functor $\mathscr{O}_{R/A}^{\rm{cris}}: (B, J, \delta) \mapsto B$ on ${\rm{Cris}}(R/A)$ is clearly a sheaf for the Zariski topology.  

\begin{remark}
For the consideration of crystalline
sites and associated sheaf categories 
in the rest of this paper, we employ the procedures in \cite[Tag 000H]{SP} to appropriately cut down
${\rm{Cris}}(R/A)$ to a small category in order to handle set-theoretic
issues arising for ``global sections'', derived functors, and so on. 
\end{remark}

Note that ${\rm{Cris}}(R/A)$ admits fiber products:  if 
$$(\Spec(B'), J', \delta') \to (\Spec(B), J, \delta) \leftarrow (\Spec(B''), J'', \delta'')$$
are two maps to an object (so $J$ is the preimage of $J'$ under $B \to B'$
and of $J''$ under $B \to B''$) then for $C = B' \otimes_B B''$ there is an evident map onto $R$ with kernel generated
by the image $I$ of $J' \otimes_B J''$, and $(\Spec(D_{C,\eta}(I)), I_D, \gamma)$ is a fiber product.  This latter
triple belongs to the category ${\rm{Cris}}(R/A)$ because $D_{C,\eta}(I)/I_D = C/I = R$ (final equality
by \cite[3.20(4)]{bogus}) 
and $I_D$ is pd-generated by products of images of $J'$ and $J''$ (so if we pick $m, n \ge 1$
so large that $m {J'}_D^{[n]} = 0$ and $m {J''}_D^{[n]} = 0$ then $m I_D^{[n]} =0$ since
$I_D^{[n]}$ is spanned over $D_{C,\eta}(I)$ by the image of ${J'}^{[n]} \otimes {J''}^{[n]}$). 

The functor 
$$q: {\rm{Cris}}(R/A)_{\rm{ind}} \to {\rm{Cris}}(R/A)_{\rm{Zar}}$$
 given by the identity on objects is clearly continuous, so $q_*$ carries sheaves to sheaves.
 Furthermore, since ${\rm{Cris}}(R/A)$ has both fiber products and equalizers (it is easy to verify the preservation of
 the condition ``$m J^{[n]} = 0$ for some $m, n \ge 1$'' under the natural equalizer construction), 
 by \cite[Tag 00X4]{SP} (and its proof) and \cite[Tag 00X5]{SP} the left adjoint $q^{-1}$ is exact;
 i.e., $q$ defines a morphism of sites ${\rm{Cris}}(R/A)_{\rm{Zar}} \to {\rm{Cris}}(R/A)_{\rm{ind}}$.
  Letting $\mathscr{O}_{R/A}^{{\rm{cris}},{\rm{ind}}}$ and $\mathscr{O}_{R/A}^{{\rm{cris}}, {\rm{Zar}}}$ denote
the sheaf $\mathscr{O}_{R/A}^{\rm{cris}}$ respectively on ${\rm{Cris}}(R/A)_{\rm{ind}}$ and ${\rm{Cris}}(R/A)_{\rm{Zar}}$,
we have $q_*(\mathscr{O}_{R/A}^{{\rm{cris}},{\rm{ind}}}) = \mathscr{O}_{R/A}^{{\rm{cris}},{\rm{Zar}}}$.
Thus, there will be no risk of confusion by writing $\mathscr{O}_{R/A}$ to denote both 
 $\mathscr{O}_{R/A}^{{\rm{cris}},{\rm{ind}}}$ and $\mathscr{O}_{R/A}^{{\rm{cris}},{\rm{Zar}}}$.
 
 \begin{definition}\label{qcohaffine} An $\mathscr{O}_{R/A}$-module $\mathscr{F}$ is {\em quasi-coherent}
 if for any $(\Spec(B), J, \delta)$ in ${\rm{Cris}}(R/A)$ the $\mathscr{O}_B$-module restriction of $\mathscr{F}$ to the affine Zariski site of
 $(\Spec(B), J, \delta)$ is quasi-coherent.
 \end{definition}
 
 \begin{lemma}\label{RpiO}
 For $i > 0$, the sheaves ${\rm{R}}^iq_*(\mathscr{O}_{R/A})$ vanish.
 More generally, if $\mathscr{F}$ is an $\mathscr{O}_{R/A}$-module that is quasi-coherent then ${\rm{R}}^iq_*(\mathscr{F})$ vanishes for all $i > 0$.
 \end{lemma}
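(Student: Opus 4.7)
The plan is to exhibit ${\rm{R}}^i q_*(\mathscr{F})$ as a presheaf of Zariski cohomologies on individual affine schemes and reduce to the standard vanishing of higher Zariski cohomology of quasi-coherent sheaves on an affine scheme. Since the indiscrete topology on the target site ${\rm{Cris}}(R/A)_{\rm{ind}}$ has only isomorphism covers, every presheaf on ${\rm{Cris}}(R/A)$ is automatically an indiscrete sheaf, so no ind-sheafification is needed when taking derived functors on that side. Hence ${\rm{R}}^i q_*(\mathscr{F})$ is simply the presheaf $U \mapsto H^i(U_{\rm{Zar}}, \mathscr{F})$, where $U = (\Spec(B), J, \delta) \in {\rm{Cris}}(R/A)$ and the right-hand side is the Zariski cohomology of $\mathscr{F}$ restricted to the slice site ${\rm{Cris}}(R/A)_{\rm{Zar}}/U$.

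Next I would identify this slice cohomology with the Zariski cohomology of $\mathscr{F}|_{\Spec(B)}$ on the small Zariski site of $\Spec(B)$. The affine opens of $\Spec(B)$, equipped with the restricted ideal and induced divided power structure, form a cofinal subsite of the slice for the Zariski topology: Zariski covers in ${\rm{Cris}}(R/A)_{\rm{Zar}}$ are by definition covers by open subschemes of $\Spec(B)$, the condition $mJ^{[n]}=0$ is inherited under localization, and fiber products over $U$ of open immersions in ${\rm{Cris}}(R/A)_{\rm{Zar}}$ coincide with the usual scheme-theoretic intersections endowed with the induced PD structure. Under this identification, Definition \ref{qcohaffine} is precisely the statement that the restriction of $\mathscr{F}$ corresponds to a quasi-coherent $\mathscr{O}_{\Spec(B)}$-module.

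The conclusion then follows from the vanishing of higher Zariski cohomology of quasi-coherent sheaves on affine schemes (no Noetherian hypothesis is needed; see \cite[Tag 01XB]{SP}). The case $\mathscr{F} = \mathscr{O}_{R/A}$ is the specialization in which $\mathscr{F}|_{\Spec(B)} = \mathscr{O}_{\Spec(B)}$, so it is subsumed by the general statement.

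The main obstacle is the site-theoretic bookkeeping in the identification step: one must check that restriction of the PD data behaves well under localization and that fiber products in ${\rm{Cris}}(R/A)_{\rm{Zar}}$ along open immersions agree with ordinary intersections of affine opens. A more hands-on alternative that sidesteps the abstract cofinality comparison is to pick a finite affine open cover $\{\Spec(B_i)\}$ of $\Spec(B)$, form the corresponding Zariski cover of $U$ in ${\rm{Cris}}(R/A)_{\rm{Zar}}$, and compute its \v{C}ech complex directly: its terms are sections of a quasi-coherent $\mathscr{O}_{\Spec(B)}$-module on affine intersections in $\Spec(B)$, so the complex is exact in positive degrees by the same Serre input, and the \v{C}ech-to-derived-functor spectral sequence collapses (the cover being acyclic for the sheaf) to yield the vanishing of ${\rm{R}}^i q_*(\mathscr{F})$ for $i>0$ directly.
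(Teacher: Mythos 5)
Your proposal is correct and follows essentially the same route as the paper: identify ${\rm{R}}^i q_*(\mathscr{F})$ with the presheaf of Zariski cohomologies (sheafification being trivial for the indiscrete topology), then reduce to the vanishing of quasi-coherent cohomology on the affine scheme $\Spec(B)$ via finite affine \v{C}ech covers. This is precisely the argument of \cite[Tag 07JJ]{SP} that the paper invokes after observing that the nilpotence hypothesis there plays no role.
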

 
 \begin{proof}
 By \cite[Exp.\,V, \S5, Prop.\,5.1(1)]{SGA4}, ${\rm{R}}^i q_*(\mathscr{F})$ is the sheafification of the presheaf 
 $$(\Spec(B), J, \delta) \mapsto {\rm{H}}^i_{\rm{Zar}}((B, J, \delta), \mathscr{F})$$
 on  ${\rm{Cris}}(R/A)_{\rm{Zar}}$.
 Such vanishing is proved in  \cite[Tag 07JJ]{SP} (by reducing to the vanishing of higher \v{C}ech cohomology of a quasi-coherent sheaf
 for a finite cover of an affine scheme by affine open subschemes) under a hypothesis related to a prime being nilpotent. But that
 hypothesis is not used in the argument (beyond its role in some foundational definitions that we have replaced
 with the condition $``mJ^{[n]}  = 0$'' for some $m, n \ge 1$), so the same argument applies to our setting. 
 \end{proof}
 
 We define the topoi
 $$(R/A)_{\rm{cris}} = {\rm{Shv}}({\rm{Cris}}(R/A)_{\rm{Zar}}),\,\,\,
 \mathscr{C}_{R/A} = {\rm{Shv}}({\rm{Cris}}(R/A)_{\rm{ind}}).$$
 We define a {\em crystal} of $\mathscr{O}_{R/A}$-modules in $(R/A)_{\rm{cris}}$ exactly as we 
did for $\mathscr{C}_{R/A}$ in Definition \ref{crystal}. 
Since $\Gamma((R/S)_{\rm{cris}}, \cdot) = \Gamma(\mathscr{C}_{R/A}, q_*(\cdot))$, Lemma \ref{RpiO}
gives
\begin{equation}\label{changetop}
{\rm{R}}\Gamma((R/S)_{\rm{cris}}, \mathscr{F}) \simeq {\rm{R}}\Gamma(\mathscr{C}_{R/A}, \mathscr{F})
\end{equation}
in $D^+(A)$ for any quasi-coherent $\mathscr{O}_{R/A}$-module $\mathscr{F}$.

\subsection{Affine Poincar\'e Lemma}

 Motivated by (\ref{changetop}) and the method in \cite[Tag 07JM]{SP}
 for computing cohomology for the indiscrete topology via a weakly final object, we prove:
 
 \begin{proposition}\label{chaoticH}
Let $\mathscr{F}$ be a quasi-coherent
$\mathscr{O}_{R/A}$-module on $(R/A)_{\rm{cris}}$, and let $\pi:P \twoheadrightarrow R$ be an $A$-algebra surjection from a smooth $A$-algebra,
with $J = \ker \pi$.  For $\nu \ge 0$, let $P(\nu) = P^{\otimes_A (\nu+1)}$ and $J(\nu) = \ker(P(\nu) \twoheadrightarrow R)$.
Define $D(\nu)_n = D_{P(\nu),\eta}(J(\nu))/n! J(\nu)_D^{[n]}$, with $J(\nu)_n = J(\nu)_D/n! J(\nu)_D^{[n]}$
and $\gamma(\nu)^n$ the divided power structure on $J(\nu)_n$, and let $M(\nu) = \varprojlim_{n \ge 1} \mathscr{F}(D(\nu)_n, J(\nu)_n, \gamma(\nu)^n)$.

The $A$-linear complex
{\rm{
$${\rm{\mbox{\v{C}}A}}(P \twoheadrightarrow R, \mathscr{F}) = (M(0) \to M(1) \to M(2) \dots)$$
}}
computes ${\rm{R}}\Gamma((R/A)_{{\rm{cris}}}, \mathscr{F})$.
\end{proposition}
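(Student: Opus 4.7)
The plan is to reduce to the indiscrete topology via (\ref{changetop}), then compute derived global sections by \v Cech methods based on the weakly final system $\{D_n\}_{n\ge 1}$, with the inverse limit over $n$ controlled via a Mittag--Leffler argument coming from the crystal hypothesis.

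By Lemma \ref{RpiO} and (\ref{changetop}), it suffices to compute $\mathrm{R}\Gamma(\mathscr{C}_{R/A},\mathscr{F})$ on the indiscrete site; there every presheaf is a sheaf, evaluation at any object is exact, and $\Gamma$ is a limit over $\mathrm{Cris}(R/A)^{op}$. For each $n\ge 1$ let $\mathscr{C}_n\subset\mathrm{Cris}(R/A)$ denote the full subcategory of objects $(B,I,\delta)$ with $n!I^{[n]}=0$. Remark \ref{crismap} supplies for each such object a divided-power $A$-algebra map $D_n\to B$, so $D_n$ is \emph{weakly final} in $\mathscr{C}_n$. A universal-property argument for divided power envelopes identifies $D(\nu)_n$ with the $(\nu+1)$-fold iterated fiber product of $D_n$ in $\mathscr{C}_n$: $\nu+1$ divided-power maps $D_n\to T$ with $T\in\mathscr{C}_n$ assemble into a ring map $P(\nu)=P^{\otimes_A(\nu+1)}\to T$ sending $J(\nu)$ into $\ker(T\to R)$, which by the universal property of $D_{P(\nu),\eta}(J(\nu))$ and the quotient by $n!J(\nu)_D^{[n]}$ factors uniquely through $D(\nu)_n\to T$. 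Hence $D(\bullet)_n$ is the \v Cech nerve of $D_n$ in $\mathscr{C}_n$.

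For fixed $n$, the cosimplicial complex $\mathscr{F}(D(\bullet)_n)$ computes $\mathrm{R}\Gamma(\mathscr{C}_n,\mathscr{F}|_{\mathscr{C}_n})$. This is the standard ``\v Cech computes cohomology'' principle for an indiscrete topos with a weakly final object (adapted from \cite[Tag 07JM]{SP}): choose an injective resolution $\mathscr{F}|_{\mathscr{C}_n}\to\mathscr{I}^\bullet$ and form the double complex $\mathscr{I}^i(D(\nu)_n)$. Each column has vertical cohomology concentrated in degree $0$ with value $\mathscr{F}(D(\nu)_n)$ because evaluation is exact on sheaves for the indiscrete topology; each row $\mathscr{I}^i(D(\bullet)_n)$ has cohomology concentrated in degree $0$ with value $\Gamma(\mathscr{I}^i)$ via the standard \v Cech argument for weakly final objects, combined with the injectivity of $\mathscr{I}^i$ to absorb the non-uniqueness of the maps $B\to D_n$. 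A spectral sequence comparison then yields the claim.

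Finally, $\mathrm{Cris}(R/A)=\bigcup_n\mathscr{C}_n$ is a filtered union of full subcategories, so $\mathrm{R}\Gamma(\mathscr{C}_{R/A},\mathscr{F})\simeq\mathrm{R}\varprojlim_n\mathrm{R}\Gamma(\mathscr{C}_n,\mathscr{F}|_{\mathscr{C}_n})$. The crystal property gives $\mathscr{F}(D(\nu)_n)\simeq D(\nu)_n\otimes_{D(\nu)_{n+1}}\mathscr{F}(D(\nu)_{n+1})$, so the transition maps $\mathscr{F}(D(\nu)_{n+1})\twoheadrightarrow\mathscr{F}(D(\nu)_n)$ are surjective; Mittag--Leffler therefore forces $R^1\varprojlim_n=0$ termwise and $\mathrm{R}\varprojlim_n$ collapses to $\varprojlim_n$, yielding precisely $M(\bullet)={\rm{\mbox{\v{C}}A}}(P\twoheadrightarrow R,\mathscr{F})$. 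The main technical obstacle is the middle step: the weakly final property supplies maps $B\to D_n$ only non-canonically, so a direct simplicial contracting homotopy on $\mathscr{F}(D(\bullet)_n)$ is unavailable and one must pass through the injective-resolution scaffolding.
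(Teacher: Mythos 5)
Your plan is in the right spirit — reduce to the indiscrete topos via (\ref{changetop}), exploit weakly final objects and the \v{C}ech nerve built from $P(\nu) = P^{\otimes_A(\nu+1)}$, identify the target complex with $M(\bullet)$. Your identification of $D(\nu)_n$ as the $(\nu+1)$-fold fiber power of $D_n$ in $\mathscr{C}_n$ (via the coproduct property of $P(\nu)$ over $A$ and the universal property of divided power envelopes) is correct, and it is essentially what the paper uses to show $h_{\widehat{D}}^{\nu+1} = h_{\widehat{D}(\nu)}$. However, your recombination over $n$ has two genuine gaps.

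First, the step ``$\mathrm{Cris}(R/A)=\bigcup_n\mathscr{C}_n$ is a filtered union, so $\mathrm{R}\Gamma(\mathscr{C}_{R/A},\mathscr{F})\simeq\mathrm{R}\varprojlim_n\mathrm{R}\Gamma(\mathscr{C}_n,\mathscr{F}|_{\mathscr{C}_n})$'' is not automatic. It would follow from a Grothendieck spectral sequence for $\Gamma_{\mathscr{C}_{R/A}} = \varprojlim_n \circ (\Gamma_{\mathscr{C}_n}(-|_{\mathscr{C}_n}))_n$, but that requires that injective presheaves produce $\varprojlim_n$-acyclic inverse systems $(\Gamma_{\mathscr{C}_n}(\mathscr{I}|_{\mathscr{C}_n}))_n$. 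Writing $\Gamma_{\mathscr{C}_n}(\mathscr{I}|_{\mathscr{C}_n}) = \mathrm{Hom}(\iota_{n!}\mathbf{Z}, \mathscr{I})$ for the left Kan extension $\iota_{n!}\mathbf{Z}$, the transition maps $\iota_{n!}\mathbf{Z} \to \iota_{(n+1)!}\mathbf{Z}$ are governed by $\pi_0(\mathscr{C}_n/X) \to \pi_0(\mathscr{C}_{n+1}/X)$, and these need not be injective: two objects of $\mathscr{C}_n$ over $X$ unconnected in $\mathscr{C}_n/X$ can become connected in $\mathscr{C}_{n+1}/X$, because fiber products of objects of $\mathscr{C}_n$ typically land only in $\mathscr{C}_N$ for larger $N$. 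So injectivity of $\mathscr{I}$ does not force Mittag--Leffler, and the ${\rm{R}}\varprojlim$ decomposition needs a real argument.

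Second, and more decisively, your Mittag--Leffler collapse of ${\rm{R}}\varprojlim_n$ to $\varprojlim_n$ invokes the \emph{crystal} property to get surjective transition maps $\mathscr{F}(D(\nu)_{n+1}) \twoheadrightarrow \mathscr{F}(D(\nu)_n)$. But the proposition is stated for an arbitrary quasi-coherent $\mathscr{O}_{R/A}$-module $\mathscr{F}$, not a crystal. Surjectivity along closed immersions is \emph{not} automatic from quasi-coherence alone (it is an extra hypothesis that the paper imposes separately in Corollary~\ref{HFOmega}), so your argument proves only a weaker statement. The paper sidesteps both issues at once by working with the colimit-of-representables sheaf $h_{\widehat{D}} = \varinjlim_n h_{(\Spec D_n, \ldots)}$, which is itself a sheaf (indiscrete topology) covering the terminal object. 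Then $\mathrm{Hom}(h_{\widehat{D}(\nu)},\mathscr{F}) = \varprojlim_n \mathscr{F}(D(\nu)_n) = M(\nu)$ \emph{on the nose} (Hom converts colimits in the source into limits, with no derived functor appearing), and a single application of the weakly-final-object argument of \cite[Tag 07JM]{SP} to $h_{\widehat{D}}$ finishes the proof for \emph{all} quasi-coherent $\mathscr{F}$, with no Mittag--Leffler condition.
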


When $\mathscr{F}$ is moreover a {\em crystal}, 
Theorem \ref{dPoincare} provides a natural isomorphism in $D^+(A)$
between the \v{C}ech-Alexander complex ${\rm{\mbox{\v{C}}A}}(P \twoheadrightarrow R, \mathscr{F})$ and 
$$\varprojlim_{N \ge 1} \mathscr{F}(D_{P,\eta}(J)/N! J_D^{[N]}, J_D/N! J_D^{[N]}, \gamma) \otimes_{D_{P,\eta}(J)/N! J_D^{[N]}} 
 \Omega^*_{(D_{P,\eta}(J)/N! J_D^{[N]})/A,\gamma} = M \widehat{\otimes}_{\widehat{D}} \widehat{\Omega}^*_{\widehat{D}/A, \gamma},$$
 where 
 $\widehat{D} = \varprojlim D_{P,\eta}(J)/N! J_D^{[N]}$ and 
 $M = \varprojlim M_N$ is the $\widehat{D}$-module with pd quasi-nilpotent $A$-linear connection 
 correspondng to the crystal $\mathscr{F}$ via Proposition \ref{micequiv}. 
 Hence, Proposition \ref{chaoticH} yields a Poincar\'e Lemma for computing the crystalline cohomology of (quasi-coherent) 
crystals in the affine case.  In Theorem \ref{hartshornethm} we will formulate
a global Poincar\'e lemma for crystals, building on the affine case.  (The use of the indiscrete topology
in the proof of Proposition \ref{chaoticH} does not globalize.) 

\begin{proof}
Using (\ref{changetop}), we may and do focus on ${\rm{R}}\Gamma(\mathscr{C}_{R/A}, \mathscr{F})$ 
rather than ${\rm{R}}\Gamma((R/A)_{{\rm{cris}}}, \mathscr{F})$.

Since $P(0) = P$ and $J(0) = J$, to simplify notation we write $D_n$ and $J_n$ to respectively denote $D(0)_n = D_{P,\eta}(J)/n! J_D^{[n]}$
and the ideal $J(0)_n \subset D(0)_n$, with $\gamma^n$ denoting the divided power structure on $J_n$.
For any object $(\Spec(B), J, \delta)$ in ${\rm{Cris}}(R/A)$, we have seen in Remark \ref{crismap}
that for sufficiently large $n$ there exists a morphism $(\Spec(D_n), J_n, \gamma^n) \to (\Spec(B), J, \delta)$. 
This says that for the pd-adic completion $\widehat{D} = \varprojlim_{n \ge 1} D_n$
equipped with its ideal $\widehat{J} = \varprojlim_{n \ge 1} J_n$ have divided power structure $\widehat{\gamma} = \varprojlim \gamma^n$, 
every object in ${\rm{Cris}}(R/A)$
admits a map to $(\Spec(\widehat{D}), \widehat{J}, \widehat{\gamma})$.  However,
the latter triple is not an object in ${\rm{Cris}}(R/A)$, but nonetheless we can define its ``functor of points''
$h_{\widehat{D}}$ on ${\rm{Cris}}(R/A)$ by
$$h_{\widehat{D}}((\Spec(B), I, \delta)) = \varinjlim_{n \ge 1} {\rm{Hom}}_{{{\rm{Cris}}(R/A)}}((\Spec(B), I, \delta), (\Spec(D_n), J_n, \gamma^n)).$$
The preceding shows that the value of $h_{\widehat{D}}$ on any object in ${\rm{Cris}}(R/A)$ is non-empty.   Hence,
the unique map $h_{\widehat{D}} \to \ast$ to the final object in $\mathscr{C}_{R/A}$ is an epimorphism.

Since $\mathscr{C}_{R/A}$ is the topos for the indiscrete topology on ${\rm{Cris}}(R/A)$, so every 
sheaf colimit in $\mathscr{C}_{R/A}$ coincides with the corresponding presheaf colimit, we can write
$$h_{\widehat{D}} = \varinjlim {\rm{Hom}}_{{\rm{Cris}}(R/A)}(\cdot, (\Spec(D_n), J_n, \gamma^n))$$
in $\mathscr{C}_{R/A}$.  In particular, for any $\mathscr{F}$ in $\mathscr{C}_{R/A}$, we have
$${\rm{Hom}}_{\mathscr{C}_{R/A}}(h_{\widehat{D}}, \mathscr{F}) = 
\varprojlim_{n \ge 1} \mathscr{F}((D_n, J_n, \gamma^n)) = M(0).$$

For any $\nu \ge 0$, consider the $(\nu+1)$-fold product $h_{\widehat{D}} \times \dots \times h_{\widehat{D}}$.  
By the universal property of divided power envelopes and the fact that
$(P(\nu), J(\nu))$ is the $(\nu+1)$-fold coproduct of copies of $(P, J)$
in the category of pairs $(C, K)$ consisting of an $A$-algebra $C$ and an ideal $K \subset C$, 
it is easy to check that $h_{\widehat{D}}^{\nu+1}$ coincides with the functor $h_{\widehat{D}(\nu)}$
defined similarly to $h_{\widehat{D}}$ by using $D_{P(\nu),\eta}(J(\nu))$ in place of $D_{P,\eta}(J)$. 
Hence,
$${\rm{Hom}}(h_{\widehat{D}}^{\nu+1}, \mathscr{F}) = {\rm{Hom}}(h_{\widehat{D}(\nu)}, \mathscr{F}) =
\varprojlim_{n \ge 1} \mathscr{F}((D(\nu)_n, J(\nu)_n, \gamma(\nu)^n)) = M(\nu).$$

Since $h_{\widehat{D}} \to \ast$ is an epimorphism, by (the proof of) \cite[Tag 07JM]{SP}
applied to the category $\mathscr{C}_{R/A}$ and the object $h_{\widehat{D}}$ 
it follows that ${\rm{R}}\Gamma(\mathscr{C}_{R/A}, \mathscr{F})$ is computed in the derived
category of $A$-modules by the natural complex 
$${\rm{Hom}}(h_{\widehat{D}}, \mathscr{F}) \to {\rm{Hom}}(h_{\widehat{D}(1)}, \mathscr{F}) \to
{\rm{Hom}}(h_{\widehat{D}(2)}, \mathscr{F}) \to \dots.$$
This is precisely the complex $M(0) \to M(1) \to M(2) \to \dots$.
\end{proof}

\begin{corollary}\label{HFOmega}
Let $\mathscr{F}$ be a quasi-coherent $\mathscr{O}_{R/A}$-module on $(R/A)_{\rm{cris}}$
such that for any morphism $(B', I', \delta') \to (B, I, \delta)$ in $(R/A)_{\rm{cris}}$
with $B' \to B$ surjective, the natural map $\mathscr{F}(B, I, \delta) \to \mathscr{F}(B', I',\delta')$ is surjective. 
Then for all $\mu > 0$ we have
$${\rm{H}}^{\nu}((R/A)_{\rm{cris}}, \mathscr{F} \otimes_{\mathscr{O}_{R/A}} \Omega^{\mu}_{R/A, {\rm{cris}}}) = 0$$
for all $\nu \ge 0$.
\end{corollary}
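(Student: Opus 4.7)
The plan is to compute the crystalline cohomology of $\mathscr{F} \otimes_{\mathscr{O}_{R/A}} \Omega^{\mu}_{R/A,{\rm{cris}}}$ directly via Proposition \ref{chaoticH}, and to recognize that the resulting complex is precisely the column complex $E^{\mu,\ast}$ studied in the course of proving Theorem \ref{dPoincare}. Choose any $A$-algebra surjection $\pi:P\twoheadrightarrow R$ with $P$ a polynomial algebra over $A$ (so $P$ automatically admits \'etale coordinates over $A$), and let $J = \ker(\pi)$. Since $\mathscr{F}$ is quasi-coherent on $(R/A)_{\rm{cris}}$ and each $\Omega^\mu_{B/A,\delta}$ is a quasi-coherent $B$-module, the tensor sheaf $\mathscr{G} := \mathscr{F} \otimes_{\mathscr{O}_{R/A}} \Omega^{\mu}_{R/A,{\rm{cris}}}$ is quasi-coherent, so Proposition \ref{chaoticH} identifies ${\rm{R}}\Gamma((R/A)_{\rm{cris}}, \mathscr{G})$ with the \v{C}ech-Alexander complex ${\rm{\mbox{\v{C}}A}}(P\twoheadrightarrow R, \mathscr{G})$.

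The $\nu$-th term of this complex is exactly
$$E^{\mu,\nu} = \varprojlim_{N\ge 1} \mathscr{F}(D(\nu)_N) \otimes_{D(\nu)_N} \Omega^\mu_{D(\nu)_N/A,\gamma(\nu)^N},$$
i.e.\ the $\nu$-th entry of the column $E^{\mu,\ast}$ appearing in the double complex used in the proof of Theorem \ref{dPoincare}. In that proof we showed $E^{\mu,\ast}$ is homotopic to zero over $A$ for every $\mu > 0$. The argument proceeds by (a) using the isomorphism (\ref{Fomega}) to interleave the two inverse systems of quotients of $\Omega^\mu_{D_{P(\ast),\eta}(J(\ast))/A}$ by $N!J(\ast)_D^{[N]}$ and to deduce that $E^{\mu,\ast}$ coincides with $\varprojlim_N \mathscr{F}(D(\ast)_N) \otimes_A \Omega^\mu_{P(\ast)/A}$; (b) showing via an \'etale-coordinate reduction and \cite[Tag 07KQ]{SP} that the cosimplicial $A$-module $\Omega^1_{P(\ast)/A}$ is null-homotopic, hence so is $\Omega^\mu_{P(\ast)/A}$ for $\mu > 0$; and (c) tensoring over $A$ with $\mathscr{F}(D(\ast)_N)$ at each level $N$, compatibly in $N$, to conclude that the inverse limit $E^{\mu,\ast}$ is null-homotopic.

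The key point is that the crystal property of $\mathscr{F}$ was invoked in step (a) only to ensure surjectivity of the transition maps $\mathscr{F}(D(\nu)_N) \twoheadrightarrow \mathscr{F}(D(\nu)_{N-1})$ that underlies the interleaving argument, and this is precisely what the hypothesis of the corollary gives us: each quotient $D(\nu)_N \twoheadrightarrow D(\nu)_{N-1}$ is a morphism in ${\rm{Cris}}(R/A)$ whose underlying ring map is surjective, so the surjectivity of $\mathscr{F}$ on sections applies. Steps (b) and (c) make no mention of crystals and go through verbatim. The main (and only) obstacle to expect is to convince oneself that this single surjectivity input really is the sole use of the crystal hypothesis in the column argument; a careful re-reading of \S\ref{compsec} confirms this is the case, so the vanishing of every $H^\nu$ for $\mu > 0$ follows immediately.
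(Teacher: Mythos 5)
Your proof is correct and takes essentially the same route as the paper: invoke Proposition \ref{chaoticH} to compute the cohomology of $\mathscr{F}\otimes_{\mathscr{O}_{R/A}}\Omega^{\mu}_{R/A,\mathrm{cris}}$ via the \v{C}ech-Alexander complex, recognize that complex as the column $E^{\mu,\ast}$ from the proof of Theorem \ref{dPoincare}, and note that the crystal property was used in the column argument only to guarantee surjectivity of the transition maps $\mathscr{F}(D(\nu)_N)\twoheadrightarrow\mathscr{F}(D(\nu)_{N-1})$, which the corollary's hypothesis supplies directly. The paper's own proof is a terse three-sentence version of precisely this observation.
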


Note that the surjectivity hypothesis on $\mathscr{F}$ is satisfied by crystals, but is also satisfied
by some quasi-coherent non-crystals (such as $\mathscr{F}(U, T, \delta) = \Omega^1_{T/A, \delta}$).

\begin{proof}
Since $\mathscr{F} \otimes_{\mathscr{O}_{R/A}} \Omega^{\mu}_{R/A, {\rm{cris}}}$ is quasi-coherent, by Proposition \ref{chaoticH} these cohomologies
are the homologies of the evident complex with $\nu$th term
$$\varprojlim_{n \ge 1} \mathscr{F}(D(\nu)_n, J(\nu)_n, \gamma(\nu)^n) \otimes_{D(\nu)_n} \Omega^{\mu}_{D(\nu)_n/A, \gamma(\nu)^n}.$$
But this is precisely the complex $E^{\mu, \ast}$ considered in the proof of Theorem \ref{dPoincare},
where we saw that it is homotopic to 0 for $\mu > 0$: the only aspect of the crystal property used in that part of the proof
is the behavior with respect to ring surjections as is being assumed here.
\end{proof}

\medskip\medskip

\section{The global crystalline site}

\subsection{Crystals and connections beyond the affine case}

Now we globalize, beginning wth a definition similar to \cite[Tag 07IG]{SP}.

\begin{definition}\label{globalsite}
Let $(S, \mathscr{I}_0, \eta)$ be a scheme equipped with a quasi-coherent ideal sheaf and a divided power structure 
on it.
Let $S_0 \subset S$ be the zero-scheme of $\mathscr{I}_0$, and $X$ an $S$-scheme
such that $\eta$ extends to $\mathscr{I}_0 \mathscr{O}_X$ $($as happens if $X$ is $S$-flat
or $\mathscr{I}_0$ is locally monogenic or $X$ is an $S_0$-scheme$)$.  Define the category 
${\rm{Cris}}(X/S)$ to consist of triples $(U, T, \delta)$
where $U \subset X$ is an open subscheme, $j:U \hookrightarrow T$ is a closed immersion 
and $\delta$ is a divided power structure on the quasi-coherent ideal $\mathscr{J}$ of $U$ in $\mathscr{O}_T$
such that:
\begin{itemize}
\item[(i)] there is an open cover $\{T_{\alpha}\}$ of $T$ and integers $n_{\alpha} \ge 1$
such that $n_{\alpha}! (\mathscr{J}|_{T_{\alpha}})^{[n_{\alpha}]} = 0$ $($so $U \to T$ is a homeomorphism$)$, 
\item[(ii)] $\delta$ is compatible with $\eta$ (i.e., a divided power structure exists on $\mathscr{J} + \mathscr{I}_0 \mathscr{O}_T$
extending $\delta$ on $\mathscr{J}$ and $\eta$ on $\mathscr{I}_0$).
\end{itemize}

This category is equipped with the Zariski topology on objects (i.e., use Zariski covers of each $T$), and as such we
call it the {\em crystalline site} for $X$ over $S$.  The corresponding topos is denoted $(X/S)_{\rm{cris}}$,
and we define the sheaf of rings $\mathscr{O}_{X/S}$ by $(U, T, \delta) \mapsto \Gamma(T, \mathscr{O}_T)$. 
\end{definition}

The category ${\rm{Cris}}(X/S)$ admits fiber products via quasi-coherent divided power envelopes
(similarly to what we have seen for ${\rm{Cris}}(R/A)$). 
There is an evident forgetful functor $u: {\rm{Cris}}(X/S) \to {\rm{Zar}}(X)$ via $(U, T, \delta) \mapsto U$, and
exactly as in \cite[Tag 07IL]{SP} this yields a morphism of sites
$$u_{X/S}: (X/S)_{\rm{cris}} \to {\rm{Shv}}(X_{\rm{Zar}}).$$
Moreover, $(X/S)_{\rm{cris}}$ is functorial in $X \to S$ and the divided power structure $\eta$ on the ideal of $S_0$ in $S$, 
by the same procedure as in \cite[Tag 07IK]{SP}; this is compatible with $u_{X/S}$ (cf.\,\cite[Tag 07MH]{SP}).

\begin{definition}\label{qcohcrys} An $\mathscr{O}_{X/S}$-module $\mathscr{F}$ is {\em quasi-coherent} if
for any $(U, T, \delta) \in {\rm{Cris}}(X/S)$, the $\mathscr{O}_T$-module $\mathscr{F}_T$ on $T_{\rm{Zar}}$
defined by $\mathscr{F}$ is quasi-coherent.  (This is equivalent to the quasi-coherence for sheaves
of modules on a ringed site in \cite[Tag 03DL]{SP} applied to $\mathscr{O}_{X/S}$-modules.)

A {\em crystal} in $(X/S)_{\rm{cris}}$ is a quasi-coherent $\mathscr{O}_{X/S}$-module $\mathscr{F}$ such that for any 
$f: (U', T', \delta') \to (U, T, \delta)$
in ${\rm{Cris}}(X/S),$ the natural $\mathscr{O}_{T'}$-linear map
$f^{\ast}(\mathscr{F}_T) \to \mathscr{F}_{T'}$ is an isomorphism. 
\end{definition}

\begin{remark}\label{localqc}
There is a general notion of ``quasi-coherent sheaf'' on a ringed site
defined in \cite[Tag 03)H]{SP}, and this agrees with 
Definition \ref{qcohcrys}. 
\end{remark}

Suppose there exists a closed immersion $j: X \hookrightarrow Y$ of $X$ into a smooth $S$-scheme $Y$,
and let $D_{Y,\eta}(X) \to Y$ be the affine map corresponding to 
the quasi-coherent divided power  $\mathscr{O}_Y$-algebra $\mathscr{D}_{\mathscr{O}_Y,\eta}(\mathscr{O}_X)$ 
equipped with its
quasi-coherent divided power ideal $(\mathscr{J}, \delta)$ compatible with $\eta$.  The affine $Y$-scheme
$$D_{Y,\eta}(X)_n = \Spec_Y(\mathscr{D}_{\mathscr{O}_Y,\eta}(\mathscr{O}_X)/n! \mathscr{J}^{[n]})$$
contains $D_{Y,\eta}(X)_1 = X$ as a closed subscheme with the same underlying topological space
(the equality with $X$ is due to $\eta$ extending to $\mathscr{I}_0 \mathscr{O}_X$).
The triple $(X, D_{Y,\eta}(X)_n, \delta)$ is an object in ${\rm{Cris}}(X/S)$.

To globalize Proposition \ref{micequiv} beyond the affine case, we just need to globalize
a definition:

\begin{definition} Let $\mathscr{F}$ be a quasi-coherent $\mathscr{O}_{X/S}$-module, and $(\nabla_n)$ a compatible system 
of integrable (divided power) connections
on the Zariski sheaves $\mathscr{F}_n = \mathscr{F}_{D_{Y,\eta}(X)_n}$ over $S$.  
We say $(\nabla_n)$ is {\em pd quasi-nilpotent}
if for each affine open $V = \Spec(P) \subset Y$ that admits an \'etale $S$-map to some $\mathbf{A}^d_S$
and the resulting expression $\nabla_n(s) = \sum_j \theta_{j,n}(s)\otimes {\rm{d}}t_j$ for $s \in \mathscr{F}_n(V)$,
we have $\theta_{j,n}^k(s) \in n! \mathscr{F}_n(V)$ for all $1 \le j \le d$ and each $n \ge 1$ 
with sufficiently large $k$ $($depending on $n$ and $s$$)$. 
\end{definition}

By Remark \ref{pdqindep} the notion of pd quasi-nilpotence is coordinate-independent
and so it is sufficient to check it using one choice of affine open cover of $X$ by such $V$'s since we can work Zariski-locally over $S$
(and thereby invoke the equivalence in Proposition \ref{micequiv}).
Since $\mathscr{F} = \mathscr{O}_{X/S}$ is a crystal, and its associated connection on
each $\mathscr{D}_{Y,\eta}(X)_n$ is the (divided power) de Rham differential, that differential 
is always pd quasi-nilpotent.

\begin{remark}\label{qnconn}
In the special case that $X$ is $S$-smooth and we choose $Y=X$, then $\mathscr{F}_n = \mathscr{F}_X$
for all $n$, so the pd quasi-nilpotence condition then says that when writing $\nabla|_V = \sum \theta_j \otimes {\rm{d}}t_j$
for an affine open $V \subset X$ admitting \'etale coordinates over $S$, we have $\theta_j^k(s) \in n! \mathscr{F}(V)$
for all $s \in \mathscr{F}(V)$ and large $k$ (depending on $n$ and $s$).  This is called {\em quasi-nilpotence}
since the only divided power structures intervening are the ones uniquely extending $\eta$ on flat $S$-schemes. 

In the special case of the crystal $\mathscr{F} = \mathscr{O}_{X/S}$, so the associated
connection is the de Rham differential $\mathscr{O}_X \to \Omega^1_{X/S}$, the quasi-nilpotent condition 
expresses the concrete fact that $(\partial/\partial t_j)^p$ vanishes on any \'etale algebra over a polynomial ring
over an $\mathbf{F}_p$-algebra.
\end{remark}

\begin{proposition}\label{globalmic}
 The category of crystals in $(X/S)_{\rm{cris}}$ is equivalent to the category of inverse systems
$(\mathscr{M}_n, \nabla_n)$ where: 
$\mathscr{M}_n$ is quasi-coherent over $\mathscr{D}_n = \mathscr{D}_{\mathscr{O}_Y,\eta}(\mathscr{O}_X)/n! \mathscr{J}^{[n]}$
with $\mathscr{D}_n \otimes_{\mathscr{D}_{n+1}} \mathscr{M}_{n+1} \to \mathscr{M}_n$
an isomorphism for all $n \ge 1$, and $(\nabla_n)$ is a pd quasi-nilpotent compatible system
of integrable $($divided power$)$ connections on the $\mathscr{M}_n$'s over $S$.

In particular, if $X \to S$ is smooth $($so we can use $Y=X$; note that $\eta$ extends to $\mathscr{I}_0 \mathscr{O}_X$
for all $S$-smooth schemes due to $S$-flatness$)$, then the category of crystals in $(X/S)_{\rm{cris}}$
is equivalent to the category of pairs $(\mathscr{M}, \nabla)$ 
where $\mathscr{M}$ is quasi-coherent sheaf on $X$ and $\nabla: \mathscr{M} \to
\mathscr{M} \otimes_{\mathscr{O}_X} \Omega^1_{X/S}$ is a quasi-nilpotent integrable connection on $\mathscr{M}$ over $S$.
\end{proposition}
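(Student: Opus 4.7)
The plan is to bootstrap from the affine equivalence of Proposition \ref{micequiv} by Zariski-localizing on $Y$ (and hence on $X$ and $S$), exploiting the coordinate-independence of pd quasi-nilpotence recorded in Remark \ref{pdqindep}.

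First I would construct the functor from crystals to inverse systems directly. Given a crystal $\mathscr{F}$ on $(X/S)_{\rm{cris}}$, define $\mathscr{M}_n := \mathscr{F}_{(X,\,D_{Y,\eta}(X)_n,\,\delta)}$; by Definition \ref{qcohcrys} each is quasi-coherent on $\mathscr{D}_n$, and the natural map $(X, D_{Y,\eta}(X)_n, \delta) \to (X, D_{Y,\eta}(X)_{n+1}, \delta)$ in ${\rm{Cris}}(X/S)$ together with the crystal property furnishes the cartesian isomorphism $\mathscr{D}_n \otimes_{\mathscr{D}_{n+1}} \mathscr{M}_{n+1} \simeq \mathscr{M}_n$. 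To build $\nabla_n$, consider $(X, D_{Y\times_S Y,\eta}(X)_n, \delta')$ with $X$ diagonally embedded; the two projections give two morphisms to $(X, D_{Y,\eta}(X)_n, \delta)$ and the crystal property yields two identifications of pullbacks whose difference (after killing $I(1)^{[2]}$) defines $\nabla_n$ exactly as in the formalism of \S\ref{connsec}. Integrability is checked using the triple fiber power $Y\times_S Y\times_S Y$ by the standard argument.

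For pd quasi-nilpotence and for showing the functor is an equivalence, I would pass to a Zariski-local situation. Cover $Y$ by affine opens $V = \Spec(P) \subset Y$ admitting étale $A$-coordinates over affine opens $\Spec(A) \subset S$; write $U = j^{-1}(V) = \Spec(R)$ and $J = \ker(P \twoheadrightarrow R)$. The restriction of $\mathscr{F}$ to ${\rm{Cris}}(R/A)$ is a crystal in the affine sense, so Proposition \ref{micequiv} applies and produces a pd quasi-nilpotent integrable cartesian connection on $\varprojlim M_n$ over $\widehat{D}_{P,\eta}(J)$. By tracing through the construction in Proposition \ref{micequiv}, this locally-produced datum agrees with the restriction of $(\mathscr{M}_n, \nabla_n)$ built globally above, which simultaneously verifies pd quasi-nilpotence of $(\nabla_n)$ and shows the global functor is fully faithful (since maps of crystals are detected Zariski-locally and the affine functor is fully faithful).

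For essential surjectivity, given $(\mathscr{M}_n, \nabla_n)$ on $Y$ satisfying the listed conditions, the same affine cover supplies via Proposition \ref{micequiv} a crystal $\mathscr{F}_V$ on each ${\rm{Cris}}(U_V/\Spec(A))$. On intersections, the affine crystals associated to the common restricted module-with-connection are \emph{canonically} isomorphic (by uniqueness in the affine equivalence), and the cocycle condition on triple overlaps reduces to functoriality of the construction in $(P, R, J)$. Standard Zariski descent for sheaves on ${\rm{Cris}}(X/S)$ (applied to the Zariski topology of $Y$, which indexes affine opens of $X$) then glues the $\mathscr{F}_V$'s into a global crystal restricting to the given data. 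The main obstacle is precisely this gluing: one needs to verify that the affine-level construction of Proposition \ref{micequiv} is natural under Zariski localization on $P$ (equivalently on $Y$), which reduces — as in the proof of Proposition \ref{dqt} — to compatibility of divided power envelopes and their pd-adic completions with localization at $1 + J$. Once this naturality is in hand, the final ``$Y=X$'' statement for $S$-smooth $X$ is immediate: $J = 0$ forces $\mathscr{D}_n = \mathscr{O}_X$ for all $n$, the cartesian/inverse-system data collapse to a single quasi-coherent $\mathscr{M}$ with integrable connection, and pd quasi-nilpotence reduces to the quasi-nilpotence condition described in Remark \ref{qnconn}.
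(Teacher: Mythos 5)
Your proposal is correct and follows essentially the same route as the paper, which simply declares the global statement immediate from the affine equivalence of Proposition \ref{micequiv} together with the coordinate-independence noted in Remarks \ref{pdqindep} and \ref{indepq}; you have merely written out the Zariski-localization and gluing details that the paper leaves implicit. Your attention to naturality of the affine construction under localization on $P$ is exactly the point that makes the gluing legitimate, and it does hold for the reasons you indicate.
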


\begin{proof}
This is immediate from the equivalence in the affine case from Proposition \ref{micequiv} (see
Remark \ref{indepq}).
\end{proof}

\begin{remark}
The analogous equivalence result in the $p$-adic crystalline setting is \cite[Tag 07JH]{SP}.
The  pd-adic completion process used here is more involved than $p$-adic completion since it isn't an ideal-adic completion and the ideals involved aren't
finitely generated in general.
\end{remark}

\subsection{Global comparison: statement and construction}

The following result is a generalization of a torsion-level version of \cite[Thm.\,3.6]{BdJ}, 
though our method of proof is different in some respects
(e.g., to handle some issues with pd-adic completions that don't arise for $p$-adic completions). 

To streamline the notation, we write $D_Y(X)$ rather than $D_{Y,\eta}(X)$. 

\begin{theorem}\label{hartshornethm} 
For any quasi-coherent crystal $\mathscr{F}$ in $(X/S)_{\rm{cris}}$, 
we have naturally
$${\rm{R}}\Gamma((X/S)_{\rm{cris}}, \mathscr{F}) \simeq
{\rm{R}}\Gamma(X_{\rm{Zar}}, \mathscr{F} \widehat{\otimes}_{\mathscr{O}^{\wedge}_{D_Y(X)}} \widehat{\Omega}^*_{D_Y(X)^{\wedge}/S,\delta})$$
where we define
\begin{equation}\label{bigdef}
\mathscr{F} \widehat{\otimes}_{\mathscr{O}^{\wedge}_{D_Y(X)}} \widehat{\Omega}^*_{D_Y(X)^{\wedge}/S,\delta} :=
\varprojlim_{n \ge 1} \mathscr{F}_{D_Y(X)_n} \otimes_{\mathscr{O}_{D_Y(X)_n}} \Omega^*_{D_Y(X)_n/S,\delta}
\end{equation}
as a complex of abelian sheaves on $X_{\rm{Zar}}$ $($using that $D_Y(X)_n$ has the same underlying space as $X$$)$.
\end{theorem}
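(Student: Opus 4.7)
The plan is to deduce this global comparison from the affine versions already established (Proposition \ref{chaoticH} and Theorem \ref{dPoincare}) via the morphism of topoi $u_{X/S}: (X/S)_{\rm{cris}} \to {\rm{Shv}}(X_{\rm{Zar}})$ together with Zariski descent. By the Leray spectral sequence for $u_{X/S}$ one has ${\rm{R}}\Gamma((X/S)_{\rm{cris}}, \mathscr{F}) \simeq {\rm{R}}\Gamma(X_{\rm{Zar}}, {\rm{R}}u_{X/S,*}(\mathscr{F}))$, so it suffices to identify ${\rm{R}}u_{X/S,*}(\mathscr{F})$ with the complex of sheaves on $X_{\rm{Zar}}$ defined termwise by (\ref{bigdef}).

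To establish this identification locally, I would cover $Y$ by affine opens $V = \Spec(P)$ contained in affine opens $\Spec(A) \subset S$ and admitting \'etale coordinate maps $V \to \mathbf{A}^d_A$; for each such $V$, set $U = V \cap X = \Spec(R)$ with associated surjection $P \twoheadrightarrow R$ and kernel $J$. Restricting $\mathscr{F}$ to $(U/A)_{\rm{cris}}$, Proposition \ref{chaoticH} computes ${\rm{R}}\Gamma((U/A)_{\rm{cris}}, \mathscr{F}|_U)$ by the \v{C}ech--Alexander complex ${\rm{\mbox{\v{C}}A}}(P \twoheadrightarrow R, \mathscr{F}|_U)$, and Theorem \ref{dPoincare} identifies this with the pd-adically completed de Rham complex over $D_{P,\eta}(J)$, which is the section of (\ref{bigdef}) over $U$. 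Since both constructions are functorial for passage to smaller such opens $U' \subset U$, the local quasi-isomorphisms are compatible with Zariski restriction; the fact that the termwise inverse limit over $n$ commutes with taking sections on such affines follows from the surjectivity of the transition maps $\mathscr{F}_{D_Y(X)_{n+1}} \to \mathscr{F}_{D_Y(X)_n}$ (a consequence of the crystal property), which kills any Mittag-Leffler obstruction. Sheafifying then yields the desired quasi-isomorphism of complexes of sheaves on $X_{\rm{Zar}}$.

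The main technical obstacle is verifying that the complex in (\ref{bigdef}), viewed as a complex of Zariski sheaves on $X$, truly computes ${\rm{R}}u_{X/S,*}(\mathscr{F})$ rather than merely $u_{X/S,*}$ of one piece of some representative. This reduces to showing that the individual terms $\mathscr{F}_{D_Y(X)_n} \otimes \Omega^\mu_{D_Y(X)_n/S,\delta}$ for $\mu > 0$ are appropriately acyclic for $u_{X/S,*}$ on small enough affines, which is precisely the content (in the affine setting, and after reducing modulo the inverse system) of Corollary \ref{HFOmega}; combined with a $\varprojlim^1$-vanishing argument from the surjective transition maps, this promotes the statement from a termwise comparison to a comparison of derived pushforwards. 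A secondary issue is checking that the identifications on overlapping small affines with possibly different \'etale coordinate charts glue coherently, but for this the coordinate-independence of pd quasi-nilpotence asserted in Remark \ref{pdqindep} (via Proposition \ref{micequiv}) together with the intrinsic, $Y$-canonical nature of $D_{Y,\eta}(X)$ and its quotients $D_Y(X)_n$ supplies the required compatibility. Once the local identifications glue on $X_{\rm{Zar}}$, applying ${\rm{R}}\Gamma(X_{\rm{Zar}}, \cdot)$ finishes the proof.
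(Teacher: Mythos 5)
Your overall strategy (Leray for $u_{X/S}$ plus the affine results, Proposition \ref{chaoticH} and Theorem \ref{dPoincare}) is in the right spirit, but there is a genuine gap at the step where you ``sheafify.'' The affine results furnish, for each suitable open $U = V \cap X$, an isomorphism in the derived category $D^+(A)$ between ${\rm{R}}\Gamma((U/A)_{\rm{cris}}, \mathscr{F})$ and the completed de Rham complex; they do \emph{not} furnish a single map (or natural zig-zag) of complexes of sheaves on $X_{\rm{Zar}}$ whose restriction to each $U$ realizes those isomorphisms. Object-by-object isomorphisms in derived categories that are ``compatible with restriction'' only up to unspecified homotopy do not glue to an identification ${\rm{R}}u_{X/S,\ast}(\mathscr{F}) \simeq \mathscr{F} \widehat{\otimes}_{\mathscr{O}^{\wedge}_{D_Y(X)}} \widehat{\Omega}^*_{D_Y(X)^{\wedge}/S,\delta}$; in the paper that sheaf-level statement is Proposition \ref{localmain}, which is \emph{deduced from} Theorem \ref{hartshornethm} using $\infty$-categorical hypersheaf descent, not used to prove it. Note also that the proof of Proposition \ref{chaoticH} goes through the indiscrete topology and the weakly final object $h_{\widehat{D}}$, a device that (as remarked after its statement) does not globalize, so it cannot be invoked ``locally and then glued'' in the naive way. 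The actual proof first builds one globally defined zig-zag $\alpha$: the quasi-isomorphism (\ref{dagger}) coming from the crystalline Poincar\'e lemma (this is where Corollary \ref{HFOmega} is used -- it gives vanishing of crystalline cohomology of $\mathscr{F} \otimes \Omega^{\mu}_{R/A,{\rm{cris}}}$ for $\mu > 0$, not acyclicity of the Zariski terms of (\ref{bigdef}) for $u_{X/S,\ast}$, which is how you propose to use it); then the map $b_{\mathscr{F}}$ obtained by evaluating on $h_{\widehat{D}} = \varinjlim h_{(X, D_Y(X)_n, \delta)}$; then the identification ${\rm{R}}\Gamma(h_{\widehat{D}}, -) \simeq {\rm{R}}\Gamma(X_{\rm{Zar}}, \varprojlim (-)_{D_Y(X)_n})$, which requires the ${\rm{R}}\varprojlim$ analysis, Lemma \ref{GN}, and the $\varprojlim$-acyclicity lemma (your ``$\varprojlim^1$ argument'' gestures at only one piece of this). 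Only after the map exists globally does one localize, via the \v{C}ech-to-derived spectral sequence for a cover of $Y$ by affines with \'etale coordinates, to reduce the quasi-isomorphism property of that fixed map to the affine case.

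Even once reduced to the affine case, your plan of simply citing Proposition \ref{chaoticH} and Theorem \ref{dPoincare} is insufficient: knowing that source and target are abstractly isomorphic does not show that the particular map $\alpha$ (equivalently $b'_{\mathscr{F}}$) is a quasi-isomorphism. The paper flags exactly this issue in a remark after the reduction, and resolves it with additional work: Lemma \ref{cechind} (hypercover computations for $K^{\bullet} = h_{\widehat{D}}^{\bullet+1}$ both upstairs and in the slice over $h_{\widehat{D}}$), the identification of the resulting double complexes with $E^{\mu,\nu}$ and $E^{\mu,\nu+1}$ from the proof of Theorem \ref{dPoincare}, the commutativity of diagram (\ref{bdiagram}), and finally the fact from \S\ref{rowsec} that the map $f^*$ induced by the second-factor inclusion $P \to P(1)$ is a quasi-isomorphism on row cohomologies. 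Your appeal to Remark \ref{pdqindep} for gluing addresses coordinate-independence of pd quasi-nilpotence, which is not the obstruction here; the obstruction is the absence of a globally defined comparison map and the identification of that map with the affine quasi-isomorphisms.
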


\begin{remark}\label{DXA}
If $S = \Spec(A)$ is affine then (\ref{bigdef}) is even an isomorphism in the derived category of sheaves of $A$-modules
on $X_{\rm{Zar}}$; the interested reader can check that the proof as given keeps track of the $A$-linear structure
throughout. 
\end{remark}

\begin{remark}\label{Hrem}
When applied with $S$ a $\mathbf{Q}$-scheme (or even more specifically the spectrum of a field of characteristic 0), this result 
recovers \cite[Ch.\,II, \S1, Thm.\,1.4]{hartshorne} 
on de Rham cohomology of varieties with general singularities
over a field of characteristic 0, 
since for $\mathbf{Q}$-schemes $S$ the site ${\rm{Cris}}(X/S)$
coincides with the infinitesimal site of $X$. 
\end{remark}

In the special case $Y=X$ (so $X$ is $S$-smooth), Theorem \ref{hartshornethm} says:

\begin{corollary}\label{globalaffbase}  If $X \to S$ is a smooth map of schemes 
then for any quasi-coherent $\mathscr{O}_X$-module $\mathscr{F}$
equipped with a quasi-nilpotent integrable connection $\nabla$ over $S$, and $\mathscr{F}_{\rm{cris}}$
the associated crystal on ${\rm{Cris}}(X/S)$, 
$${\rm{R}}\Gamma((X/S)_{\rm{cris}}, \mathscr{F}_{\rm{cris}}) 
\simeq {\rm{R}}\Gamma(X_{\rm{Zar}}, \mathscr{F} \otimes_{\mathscr{O}_X} \Omega^*_{X/S})$$
where $\mathscr{F} \otimes_{\mathscr{O}_X} \Omega^*_{X/S}$ is made into a complex via $\nabla$.

In particular, if $X$ is smooth and proper over $\Spec(A)$ for a noetherian $A$ and $(\mathscr{F}, \nabla)$ is 
a coherent sheaf on $X$ equipped with a quasi-nilpotent integrable connection over $A$ 
then ${\rm{H}}^i((X/S)_{\rm{cris}}, \mathscr{F}_{\rm{cris}})$ is a finitely generated $A$-module
that vanishes for $i > 2d$ with $d = \max_{s \in S} \dim(X_s)$. 
\end{corollary}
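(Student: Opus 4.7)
The plan is to obtain Corollary \ref{globalaffbase} as a direct consequence of Theorem \ref{hartshornethm} by making the choice $Y = X$, which is permitted because the smooth $S$-scheme $X$ is $S$-flat and hence $\eta$ automatically extends to $\mathscr{I}_0 \mathscr{O}_X$. First I would unpack what the theorem says in this case: the closed immersion $j : X \hookrightarrow X = Y$ is the identity, cut out by the zero ideal, so $D_{Y,\eta}(X) = X$ with the trivial divided power structure, and consequently $D_Y(X)_n = X$ for every $n \ge 1$. The inverse system defining pd-adic completion on both the structure sheaf and the differentials is therefore constant, giving $\widehat{\Omega}^*_{D_Y(X)^{\wedge}/S,\delta} = \Omega^*_{X/S}$ (the usual de Rham complex on $X/S$). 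Under the equivalence in the second half of Proposition \ref{globalmic} (for $S$-smooth $X$ with $Y = X$), the crystal $\mathscr{F}_{\rm{cris}}$ corresponds exactly to $(\mathscr{F},\nabla)$, so $\mathscr{F}_{\rm{cris}, D_Y(X)_n} = \mathscr{F}$ with connection $\nabla$, and the inverse limit in (\ref{bigdef}) reduces to the ordinary de Rham complex $\mathscr{F} \otimes_{\mathscr{O}_X} \Omega^*_{X/S}$ made into a complex via $\nabla$. Plugging all this into Theorem \ref{hartshornethm} yields the first displayed isomorphism.

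For the second assertion (finiteness and the vanishing range under the extra hypothesis that $X$ is smooth and proper over $S = \Spec(A)$ with $A$ noetherian, and $\mathscr{F}$ coherent), I would pass through the Zariski hypercohomology spectral sequence
\begin{equation*}
E_2^{p,q} = {\rm{H}}^p(X, \mathscr{F} \otimes_{\mathscr{O}_X} \Omega^q_{X/S}) \Longrightarrow {\rm{H}}^{p+q}(X_{\rm{Zar}}, \mathscr{F} \otimes_{\mathscr{O}_X} \Omega^*_{X/S}).
\end{equation*}
Since $X/S$ is smooth, each $\Omega^q_{X/S}$ is a finite locally free $\mathscr{O}_X$-module, vanishing for $q > d$ because the relative dimension at each fiber is at most $d$. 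Hence $\mathscr{F} \otimes_{\mathscr{O}_X} \Omega^q_{X/S}$ is coherent and vanishes for $q > d$. Grothendieck's theorem on higher direct images for proper morphisms then gives that each $E_2^{p,q}$ is a finitely generated $A$-module, and the fibral dimension bound gives $E_2^{p,q} = 0$ for $p > d$. The combined range $p, q \le d$ forces ${\rm{H}}^i = 0$ for $i > 2d$ and, via the $E_2$-finiteness together with the spectral sequence degeneration being irrelevant for finite generation on a noetherian base, yields that each ${\rm{H}}^i((X/S)_{\rm{cris}}, \mathscr{F}_{\rm{cris}})$ is finitely generated over $A$.

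There is no substantive obstacle here since essentially all work is absorbed into Theorem \ref{hartshornethm} and Proposition \ref{globalmic}; the only items to check by hand are the collapse of pd-adic completion when $Y = X$ (which is immediate from $D_{X,\eta}(X) = X$ with zero ideal) and the standard dimensional vanishing for the hypercohomology spectral sequence of a bounded complex of coherent sheaves on a smooth proper $S$-scheme.
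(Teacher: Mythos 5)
Your proposal is correct and follows essentially the same route as the paper, which obtains the corollary simply by specializing Theorem \ref{hartshornethm} to $Y=X$ (so $D_{X,\eta}(X)=X$, pd-adic completion is trivial, and the crystal corresponds to $(\mathscr{F},\nabla)$ via Proposition \ref{globalmic}). Your added spelling-out of the finiteness and vanishing claims via the hypercohomology spectral sequence, properness over a noetherian base, and the fibral dimension bound on both $p$ and $q$ is exactly the standard argument the paper leaves implicit.
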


The proof of Theorem \ref{hartshornethm} is a bit long. In this section we focus on constructing a natural map 
\begin{equation}\label{alphadef}
\alpha: {\rm{R}}\Gamma((X/S)_{\rm{cris}}, \mathscr{F}) \to 
{\rm{R}}\Gamma(X_{\rm{Zar}}, \mathscr{F} \widehat{\otimes}_{\mathscr{O}^{\wedge}_{D_Y(X)}} \widehat{\Omega}^*_{D_Y(X)^{\wedge}/S,\delta})
\end{equation}
that is functorial in $X$ (over $(S, \mathscr{I}_0, \eta)$). In \S\ref{globalproof} we will use 
a \v{C}ech-to-derived spectral sequence argument to reduce the quasi-isomorphism property to the case of affine $S$, $X$, and $Y$.
This task in the affine case
will be settled using (the proof of) Theorem \ref{dPoincare}. 

Motivated by \cite[Thm.\,3.2]{BdJ}, we now use a similar argument to loc.\,cit.\,to see that applying ${\rm{R}}u_{X/S,\ast}$ to the natural map 
$$\mathscr{F} \otimes_{\mathscr{O}_{X/S}} \Omega^*_{X/S, {\rm{cris}}} \to \mathscr{F}[0]$$
yields a quasi-isomorphism.  It suffices to show that ${\rm{R}}u_{X/S,\ast}(\mathscr{F} \otimes_{\mathscr{O}_{X/S}}
\Omega^{\mu}_{X/S,{\rm{cris}}}) = 0$ for all $\mu > 0$.  Its homology in degree $\nu \ge 0$ is the sheafification of the presheaf
whose value on affine objects $(\Spec(R), J, \delta)$ over an open affine $\Spec(A) \subset S$ (with $\delta$
compatible with $\eta$) is
$${\rm{H}}^{\nu}((\Spec(R)/S)_{\rm{cris}}, \mathscr{F} \otimes \Omega^{\mu}_{X/S, {\rm{cris}}}) =
{\rm{H}}^{\nu}((R/A)_{\rm{cris}}, \mathscr{F}_R \otimes_{\mathscr{O}_{R/A}^{\rm{cris}}} \Omega^{\mu}_{R/A, {\rm{cris}}}),$$
where $\mathscr{F}_R$ is the quasi-coherent restriction of $\mathscr{F}$ to ${\rm{Cris}}(R/A)$ (this is the full subcategory of affine objects in
${\rm{Cris}}(\Spec(R)/\Spec(A))$) equipped with the Zariski topology.   The right side vanishes for all $\mu > 0$
(and all $\nu \ge 0$) by Corollary \ref{HFOmega}; that corollary imposes a ``surjectivity'' hypothesis on $\mathscr{F}$ that is satisfied because
$\mathscr{F}$ is a crystal. 

We conclude from the commutative diagram
$$\xymatrix{
{\rm{R}}\Gamma((X/S)_{\rm{cris}}, \mathscr{F}) \ar@{=}[r] & {\rm{R}}\Gamma(X_{\rm{Zar}}, {\rm{R}}u_{X/S,\ast}(\mathscr{F})) \\
{\rm{R}}\Gamma((X/S)_{\rm{cris}}, \mathscr{F} \otimes_{\mathscr{O}_{X/S}} \Omega^*_{X/S,{\rm{cris}}}) \ar[u] \ar@{=}[r] & 
{\rm{R}}\Gamma(X_{\rm{Zar}}, {\rm{R}}u_{X/S,\ast}(\mathscr{F} \otimes_{\mathscr{O}_{X/S}} \Omega^*_{X/S,{\rm{cris}}})) \ar[u]^-{\simeq}}
$$
that the left side is an isomorphism in $D(A)$; we write this as
\begin{equation}\label{dagger}
{\rm{R}}\Gamma((X/S)_{\rm{cris}}, \mathscr{F}) \simeq {\rm{R}}\Gamma((X/S)_{\rm{cris}}, \mathscr{F} \otimes_{\mathscr{O}_{X/S}} \Omega^*_{X/S,{\rm{cris}}}).
\end{equation}
Note that this map naturally goes from right to left.

To make contact with the ``termwise pd-adic completion'' (\ref{bigdef}) of $\mathscr{F} \otimes_{\mathscr{O}_{X/S}} \Omega^*_{X/S,{\rm{cris}}}$, 
we now consider the direct limit sheaf $h_{\widehat{D}} := \varinjlim h_{(X, D_Y(X)_n, \delta)}$ in $(X/S)_{\rm{cris}}$.
Using the map $h_{\widehat{D}} \to \ast$ to the final object of the topos, we have a natural transformation
${\rm{Hom}}(\ast, \cdot) \to {\rm{Hom}}(h_{\widehat{D}}, \cdot)$ of functors
${\rm{Ab}}((X/S)_{\rm{cris}}) \to {\rm{Ab}}$.  Passing to derived functors, this defines naturally
$${\rm{R}}\Gamma((X/S)_{\rm{cris}}, \mathscr{G}^{\bullet}) \to {\rm{R}}\Gamma(h_{\widehat{D}}, \mathscr{G}^{\bullet})$$
for $\mathscr{G}^{\bullet}$ in $D^+({\rm{Ab}}((X/S)_{\rm{cris}}))$.  
For $\mathscr{F}$ as in the statement of the theorem, we therefore have a natural map
\begin{equation}\label{2dagger}
b_{\mathscr{F}}: {\rm{R}}\Gamma((X/S)_{\rm{cris}}, \mathscr{F} \otimes_{\mathscr{O}_{X/S}} \Omega^*_{X/S,{\rm{cris}}}) \to 
{\rm{R}}\Gamma(h_{\widehat{D}}, \mathscr{F} \otimes_{\mathscr{O}_{X/S}} \Omega^*_{X/S,{\rm{cris}}}).
\end{equation}
For any $\mathscr{G}^{\bullet} \in D^+({\rm{Ab}}((X/S)_{\rm{cris}}))$
such that all maps $\mathscr{G}^{\bullet}_{D_Y(X)_{n+1}} \to \mathscr{G}^{\bullet}_{D_Y(X)_n}$ of complexes of abelian sheaves on $X_{\rm{Zar}}$
are surjective termwise (e.g., all $\mathscr{G}^i$ quasi-coherent,
such as $\mathscr{G}^{\bullet} = \mathscr{F} \otimes_{\mathscr{O}_{X/S}} \Omega^*_{X/S,{\rm{cris}}}$), 
we will build a natural isomorphism
\begin{equation}\label{3dagger}
{\rm{R}}\Gamma(h_{\widehat{D}}, \mathscr{G}^{\bullet}) \simeq {\rm{R}}\Gamma(X_{\rm{Zar}}, \varprojlim \mathscr{G}^{\bullet}_{D_Y(X)_n}).
\end{equation}
Composing (\ref{dagger}), (\ref{2dagger}), and (\ref{3dagger}) would then define (\ref{alphadef}), and its quasi-isomorphism property
amounts to the same for $b_{\mathscr{F}}$ (which we'll prove at the end).  Note that the proof of the quasi-isomorphism property for
(\ref{dagger}) used Corollary \ref{HFOmega}. 

To construct (\ref{3dagger}), we now analyze ${\rm{R}}\Gamma(h_{\widehat{D}},\cdot)$.
As a functor on ${\rm{Ab}}((X/S)_{\rm{cris}})$ we have 
$$\Gamma(h_{\widehat{D}}, \cdot) = \varprojlim \Gamma((X,D_Y(X)_n,\delta), \cdot).$$
A general construction with abelian sheaves in any topos
given in \cite[2.3]{BdJ} gives that 
$${\rm{R}}\Gamma(h_{\widehat{D}}, \cdot) = {\rm{R}}\varprojlim({\rm{R}}\Gamma(\widehat{D}, \cdot))$$
as functors $D^+({\rm{Ab}}((X/S)_{\rm{cris}})) \to D^+({\rm{Ab}})$, where
${\rm{R}}\Gamma(\widehat{D},\cdot)$ denotes the total derived functor of the functor 
$$\mathscr{G} \mapsto \Gamma(\widehat{D}, \mathscr{G}) := (\Gamma((X,D_Y(X)_n, \delta), \mathscr{G}))_{n \ge 1}$$
from the category ${\rm{Ab}}((X/S)_{\rm{cris}})$ to the category of inverse systems of abelian groups. 
The key point now is to rewrite ${\rm{R}}\Gamma(\widehat{D}, \mathscr{G}^{\bullet})$ on bounded-below complexes $\mathscr{G}^{\bullet}$ 
in more convenient terms.  

For any $\mathscr{G}$ in ${\rm{Ab}}((X/S)_{\rm{cris}})$ we have
$$\Gamma(\widehat{D}, \mathscr{G}) = (\Gamma(X_{\rm{Zar}}, \mathscr{G}_{D_Y(X)_n}))_{n \ge 1}$$
as an inverse system of abelian groups, where $\mathscr{G}_{D_Y(X)_n}$ is the abelian sheaf on $(D_Y(X)_n)_{\rm{Zar}} = X_{\rm{Zar}}$
defined by $\mathscr{G}$ (using the object $(X, D_Y(X)_n, \delta) \in {\rm{Cris}}(X/S)$).  
In other words, $\Gamma(\widehat{D}, \cdot) = F_1 \circ F_2$ where $F_2: {\rm{Ab}}((X/S)_{\rm{cris}}) \to {\rm{Ab}}(X_{\rm{Zar}})^{\mathbf{N}}$
associates to any $\mathscr{G}$ the inverse system $(\mathscr{G}_{D_Y(X)_n})_{n \ge 1}$ of abelian sheaves on $X_{\rm{Zar}}$
and $F_1:{\rm{Ab}}(X_{\rm{Zar}})^{\mathbf{N}} \to {\rm{Ab}}^{\mathbf{N}}$ is the functor
carrying an inverse system of abelian sheaves on $X_{\rm{Zar}}$ to the inverse system of global sections.

The functor $F_2$ is clearly exact, and we claim that it carries injectives to $F_1$-acyclics. 
For any injective $\mathscr{I}$ in ${\rm{Ab}}((X/S)_{\rm{cris}})$ and object $(U, T, \delta)$ in ${\rm{Cris}}(X/S)$,
the associated abelian sheaf $\mathscr{I}_T$ on $T_{\rm{Zar}}$ is injective because 
the restriction of $\mathscr{I}$ to the slice category over $(U, T, \delta)$ is injective
and the functor assigning to any abelian sheaf on the slice category over $(U, T, \delta)$ the associated
Zariski sheaf on $T$ has an exact left adjoint. 

We have shown that in the inverse system $(\mathscr{I}_{D_Y(X)_n})_{n \ge 1}$ all members
are injective as abelian sheaves on the common Zariski site $(D_Y(X)_n)_{\rm{Zar}} = X_{\rm{Zar}}$.
We want to show that this inverse system is acyclic for $F_1$. For that purpose, we shall 
compute the derived functors of $F_1$.  There is a natural guess: the collection of functors
$$(\mathscr{G}_n)_{n \ge 1} \mapsto ({\rm{H}}^j(X_{\rm{Zar}}, \mathscr{G}_n))_{n \ge 1}$$
for $j \ge 0$ form a $\delta$-functor in an evident manner and recover $F_1$ for $j=0$. This really is the derived
functor of $F_1$ because it is erasable: the functors for $j > 0$ vanish on inverse systems of flasque sheaves,
so the Godement {\em canonical} flasque construction does the job. 

Having confirmed that $F_2(\mathscr{I})$ is $F_1$-acyclic, since $F_2$ is exact we conclude that
$${\rm{R}}\Gamma(\widehat{D}, \mathscr{G}^{\bullet}) = {\rm{R}}F_1(F_2(\mathscr{G})) = {\rm{R}}F_1((\mathscr{G}^{\bullet}_{D_Y(X)_n})_{n \ge 1}).$$
Thus, 
\begin{equation}\label{star}
{\rm{R}}\Gamma(h_{\widehat{D}}, \mathscr{G}^{\bullet}) 
\simeq {\rm{R}}\varprojlim({\rm{R}}F_1((\mathscr{G}^{\bullet}_{D_Y(X)_n}))).
\end{equation}

\begin{remark}\label{SPrem} Beware that for our purposes, ${\rm{R}}\varprojlim$ denotes an actual derived functor
$D^+(\mathcal{A}^{\mathbf{N}}) \to D^+(\mathcal{A})$ for an abelian category $\mathcal{A}$ with enough injectives
(and with countable products, so it admits inverse limits), whereas
in \cite{SP} the same notation is used with a different meaning: a functor
$D(\mathcal{A})^{\mathbf{N}} \to D(\mathcal{A})$ defined in \cite[Tag 0BK7]{SP}.
To avoid confusion, we denote the latter functor as ${\rm{R}}\varprojlim_{\rm{SP}}$; we will not use it. 
\end{remark}

In (\ref{star}) we are working with ${\rm{R}}\varprojlim \circ {\rm{R}}F_1$, where $F_1 = G^{\mathbf{N}}: {\rm{Ab}}((X/S)_{\rm{cris}})^{\mathbf{N}} \to
{\rm{Ab}}^{\mathbf{N}}$ is the termwise application of $G=\Gamma(X_{\rm{Zar}},\cdot)$.
We claim that ${\rm{R}}\varprojlim$ passes through derived functors of such a form $G^{\mathbf{N}}$ rather generally, as follows:

\begin{lemma}\label{GN} Let $G: \mathcal{A} \to \mathcal{B}$ is a left-exact functor between Grothendieck abelian categories
admitting countable products so inverse limits exist in each caegory.
Assume also that $G$ commutes with inverse limits $($in the sense that
$G \circ \varprojlim = \varprojlim \circ G^{\mathbf{N}}$$)$
and that $G$ has an exact left adjoint.

The functors $D^+(\mathcal{A}^{\mathbf{N}}) \to D^+(\mathcal{B})$
defined by 
$${\rm{R}}\varprojlim \circ {\rm{R}}(G^{\mathbf{N}}),\,\,\,{\rm{R}}G \circ {\rm{R}}\varprojlim$$
are naturally isomorphic.  
\end{lemma}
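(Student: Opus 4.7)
The plan is to apply Grothendieck's theorem on composition of derived functors in two ways, keyed to the underived identity $\varprojlim \circ G^{\mathbf{N}} = G \circ \varprojlim$ of functors $\mathcal{A}^{\mathbf{N}} \to \mathcal{B}$ provided by the hypothesis that $G$ commutes with inverse limits. Both sides of the claimed isomorphism will be identified with the total right derived functor of this common functor, and hence with each other. Setting the stage, $\mathcal{A}^{\mathbf{N}}$ and $\mathcal{B}^{\mathbf{N}}$ are themselves Grothendieck abelian categories (functor categories from a small category into a Grothendieck abelian category are again Grothendieck abelian) and so have enough injectives, and each of $\varprojlim$, $G^{\mathbf{N}}$, and $G$ is left exact (the first is a right adjoint, the second is left exact termwise), so all the derived functors in sight exist on $D^+$.

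The key input is that both $G^{\mathbf{N}}$ and $\varprojlim$ carry injectives to injectives, which I would verify by exhibiting exact left adjoints for each. The exact left adjoint $L : \mathcal{B} \to \mathcal{A}$ of $G$ extends termwise to $L^{\mathbf{N}} : \mathcal{B}^{\mathbf{N}} \to \mathcal{A}^{\mathbf{N}}$, which is again exact (exactness in a functor category is checked termwise) and is left adjoint to $G^{\mathbf{N}}$ via the termwise adjunction. For $\varprojlim$, the constant-diagram functor $c:\mathcal{A} \to \mathcal{A}^{\mathbf{N}}$ sending $X$ to the inverse system with every term $X$ and every transition $\mathrm{id}_X$ is exact (termwise again), and a natural transformation $c(X) \to (A_n)$ is precisely a compatible family $X \to A_n$, i.e., a map $X \to \varprojlim A_n$, exhibiting $c \dashv \varprojlim$. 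Since a right adjoint of an exact functor preserves injectives, both $G^{\mathbf{N}}$ and $\varprojlim$ preserve injectives, as claimed.

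With these two preservation facts in hand, Grothendieck's composition-of-derived-functors theorem applies in both directions: since $G^{\mathbf{N}}$ sends injectives to injectives (hence in particular to $\varprojlim$-acyclics),
\[
R\varprojlim \circ R G^{\mathbf{N}} \simeq R(\varprojlim \circ G^{\mathbf{N}}),
\]
and since $\varprojlim$ sends injectives to injectives (hence to $G$-acyclics),
\[
RG \circ R\varprojlim \simeq R(G \circ \varprojlim).
\]
The underived identity $\varprojlim \circ G^{\mathbf{N}} = G \circ \varprojlim$ then identifies the two right-hand sides, yielding the desired natural isomorphism. There is no serious obstacle here: the argument is a formal packaging of the Grothendieck composition theorem together with the existence of exact left adjoints, and the only thing one needs to double-check is that the adjoint and exactness properties of $L^{\mathbf{N}}$ and $c$ transfer termwise between $\mathcal{A}^{\mathbf{N}}$ and $\mathcal{A}$, which is entirely routine.
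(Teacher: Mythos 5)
Your proof is correct and follows essentially the same route as the paper: exhibit the exact left adjoints (the constant-system functor for $\varprojlim$ and the termwise extension of the exact left adjoint of $G$ for $G^{\mathbf{N}}$), deduce that both functors preserve injectives, and apply the composition theorem for derived functors on both sides of the underived identity $\varprojlim \circ G^{\mathbf{N}} = G \circ \varprojlim$. No changes needed.
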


The hypotheses on the abelian categories are satisfied by ${\rm{Ab}}(X_{\rm{Zar}})$ and ${\rm{Ab}}$,
and the hypotheses on $G$ are satisfied by $\Gamma(X_{\rm{Zar}},\cdot)$.
An analogous result is proved in \cite[Tag 08U1]{SP}
for ${\rm{R}}\varprojlim_{\rm{SP}}$ (see Remark \ref{SPrem}), using $({\rm{R}}G)^{\mathbf{N}}: D(\mathcal{A})^{\mathbf{N}} \to D(\mathcal{B})^{\mathbf{N}}$
rather than ${\rm{R}}(G^{\mathbf{N}})$. 

\begin{proof}
The functor $\varprojlim: \mathcal{A}^{\mathbf{N}} \to \mathcal{A}$ carries injectives to injectives because
it has as a left adjoint the exact functor assigning to any object $C \in \mathcal{A}$ the constant inverse system with object $C$ at every level.
Hence, $${\rm{R}}G \circ {\rm{R}} \varprojlim = {\rm{R}}(G \circ \varprojlim) = {\rm{R}}(\varprojlim \circ G^{\mathbf{N}}).$$

By hypothesis $G$ has an exact left adjoint, say $F: \mathcal{B} \to \mathcal{A}$.
It is easy to check that $G^{\mathbf{N}}: \mathcal{A}^{\mathbf{N}} \to \mathcal{B}^{\mathbf{N}}$ has a left adjoint given by
$K = (K_m)_{m \ge 1} \mapsto (F(K_m))_{m \ge 1}$, and this is exact because $F$ is exact. 
Thus, $G$ carries injectives to injectives and hence
${\rm{R}}(\varprojlim \circ G^{\mathbf{N}}) = {\rm{R}}\varprojlim \circ {\rm{R}}(G^{\mathbf{N}})$, so the desired isomorphism
of functors is established.  
\end{proof}

Combining (\ref{star}) and Lemma \ref{GN}, we have shown 
\begin{equation}\label{0}
{\rm{R}}\Gamma(h_{\widehat{D}}, \mathscr{G}^{\bullet}) \simeq 
{\rm{R}}\Gamma(X_{\rm{Zar}}, {\rm{R}}\varprojlim((\mathscr{G}^{\bullet}_{D_Y(X)_n})_{n \ge 1}))
\end{equation}
for $\mathscr{G}^{\bullet} \in D^+({\rm{Ab}}((X/S)_{\rm{cris}}))$.
We next aim to show that the ${\rm{R}}\varprojlim$ on the right side of (\ref{0}) is just the inverse limit complex
when 
$$\mathscr{G}^i_{D_Y(X)_{n+1}} \to \mathscr{G}^i_{D_Y(X)_n}$$
is surjective in ${\rm{Ab}}(X_{\rm{Zar}})$ for all $i$ and all $n$ (e.g., all $\mathscr{G}^i$ quasi-coherent).

\begin{lemma}
Let $(\mathscr{G}_n)$ be an inverse system of abelian sheaves on a topological space $X$.
Assume there exists a basis $\mathscr{B}$ for the topology such that for all $U \in \mathscr{B}$ the following are satisfied:
\begin{itemize}
\item[(i)] $\varprojlim^1 (\mathscr{G}_n(U))=0$ $($e.g., all transition maps $\mathscr{F}_{n+1}(U) \to \mathscr{F}_n(U)$ are surjective$)$, 
\item[(ii)] ${\rm{H}}^j(U, \mathscr{G}_n) = 0$ for all $j > 0$ and all $n$.
\end{itemize}
Then $\mathscr{G}_n$ is $\varprojlim$-acyclic (i.e., ${\rm{R}}^k \varprojlim (\mathscr{G}_n) = 0$ for all $k > 0$).

In particular, for any bounded-below complex $(\mathscr{G}^{\bullet}_n)$ of inverse systems 
of quasi-coherent sheaves on a scheme such that the transition maps $\mathscr{G}^i_{n+1} \to \mathscr{G}^i_n$ are surjective
for each $i$ and $n$, we have ${\rm{R}}\varprojlim((\mathscr{G}^{\bullet}_n)) = \varprojlim_n \mathscr{G}^{\bullet}_n$ 
$($the complex given by forming termwise inverse limits$)$. 
\end{lemma}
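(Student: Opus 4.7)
The strategy is to combine Lemma \ref{GN} (applied to $G = \Gamma(U,-)$ for $U \in \mathscr{B}$) with a direct cohomological computation of $\mathrm{H}^k(U, \varprojlim_n \mathscr{G}_n)$ via a Roos-style short exact sequence of sheaves on $X$, and then to descend the resulting vanishing on basis opens to vanishing of the higher cohomology sheaves of $\mathrm{R}\varprojlim \mathscr{G}_n$.

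The functor $\Gamma(U,-) \colon \mathrm{Ab}(X) \to \mathrm{Ab}$ commutes with inverse limits and has the exact left adjoint $A \mapsto j^U_!(\underline{A})$ (extension by zero of the constant sheaf), so Lemma \ref{GN} gives a natural isomorphism
$$\mathrm{R}\Gamma(U, \mathrm{R}\varprojlim \mathscr{G}_n) \simeq \mathrm{R}\varprojlim \mathrm{R}\Gamma(U,\mathscr{G}_n).$$
By hypothesis (ii) the right side equals $\mathrm{R}\varprojlim \Gamma(U,\mathscr{G}_n)$, and by hypothesis (i) combined with the AB4* fact that $\mathrm{R}^p\varprojlim$ vanishes on $\mathrm{Ab}$ for $p \geq 2$ and agrees with $\varprojlim^1$ for $p = 1$, this reduces further to $\varprojlim \Gamma(U,\mathscr{G}_n)$ concentrated in degree $0$ for every $U \in \mathscr{B}$.

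In parallel, I would consider the short exact sequence of abelian sheaves
$$0 \to \varprojlim_n \mathscr{G}_n \to \prod_n \mathscr{G}_n \xrightarrow{1-t} \prod_n \mathscr{G}_n \to 0,$$
whose surjectivity on the right is checked on stalks via section-surjectivity over $U \in \mathscr{B}$, where the sectionwise cokernel is $\varprojlim^1(\mathscr{G}_n(U)) = 0$ by (i). To compute $\mathrm{H}^k(U, \prod_n \mathscr{G}_n)$ for $U \in \mathscr{B}$, I would form the termwise product of functorial injective resolutions $\mathscr{G}_n \to \mathscr{I}_n^\bullet$ in $\mathrm{Ab}(X)$: the complex $\prod_n \mathscr{I}_n^\bullet$ is termwise injective, and each cohomology sheaf in degree $k > 0$ is the sheafification of the presheaf $U' \mapsto \prod_n \mathrm{H}^k(U', \mathscr{G}_n)$, which vanishes on $\mathscr{B}$ by (ii) and hence has zero stalks. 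Thus $\prod_n \mathscr{I}_n^\bullet$ is an injective resolution of $\prod_n \mathscr{G}_n$, yielding $\mathrm{H}^k(U, \prod_n \mathscr{G}_n) = \prod_n \mathrm{H}^k(U, \mathscr{G}_n) = 0$ for $U \in \mathscr{B}$ and $k > 0$. The long exact cohomology sequence for the displayed short exact sequence then gives $\mathrm{H}^k(U, \varprojlim_n \mathscr{G}_n) = 0$ for all $k > 0$ and $U \in \mathscr{B}$, with $k = 1$ handled directly by (i).

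The two computations are combined via the distinguished triangle $\varprojlim \mathscr{G}_n \to \mathrm{R}\varprojlim \mathscr{G}_n \to \tau_{\geq 1}\mathrm{R}\varprojlim \mathscr{G}_n \to$ to deduce $\mathrm{R}\Gamma(U, \tau_{\geq 1}\mathrm{R}\varprojlim \mathscr{G}_n) = 0$ for all $U \in \mathscr{B}$, and an induction on the lowest non-zero cohomology sheaf of $\tau_{\geq 1}\mathrm{R}\varprojlim \mathscr{G}_n$, using the hypercohomology spectral sequence over $U \in \mathscr{B}$ whose differentials on $E_2^{0,k}$ land in groups $\mathrm{H}^r(U, \varprojlim \mathscr{G}_n)$ or $\mathrm{H}^r(U, \mathrm{R}^{k'}\varprojlim \mathscr{G}_n)$ with $0 < k' < k$ (all already known to vanish), forces $\mathrm{R}^k\varprojlim \mathscr{G}_n = 0$ as sheaves for $k > 0$. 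For the \emph{in particular} statement, apply the main claim termwise to each $(\mathscr{G}^i_n)_n$ with $\mathscr{B}$ the collection of affine opens on the scheme: (i) holds by Mittag--Leffler from surjective transitions of abelian groups, and (ii) by Serre vanishing for quasi-coherent sheaves on affines; boundedness below of the complex then promotes termwise $\varprojlim$-acyclicity to $\mathrm{R}\varprojlim(\mathscr{G}_n^\bullet) = \varprojlim_n \mathscr{G}_n^\bullet$. The main obstacle is securing $\mathrm{H}^k(U, \prod_n \mathscr{G}_n) = 0$, since in general a product of sheaf resolutions need not remain a resolution, and only the combination of (ii) with the basis property of $\mathscr{B}$ makes the necessary sheafification argument go through.
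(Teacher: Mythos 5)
Your argument is correct, but it follows a genuinely different route from the paper's. The paper never invokes Lemma \ref{GN} here: it factors the sheaf-level inverse limit as (sheafification) $\circ$ (presheaf inverse limit) $\circ$ (termwise forgetful functor to presheaves), notes the middle and right functors preserve injectives appropriately, and runs the resulting Grothendieck spectral sequence, computing ${\rm{R}}^jG$ and ${\rm{R}}^i\varprojlim_{\rm{P}}$ by erasability; hypotheses (i) and (ii) then kill every $E_2$-term away from $(0,0)$ after sheafification, giving the vanishing of ${\rm{R}}^k\varprojlim$ in one stroke. You instead prove two statements on basis opens --- that ${\rm{R}}\Gamma(U,{\rm{R}}\varprojlim \mathscr{G}_n)$ is concentrated in degree $0$ (via Lemma \ref{GN} applied to $G=\Gamma(U,\cdot)$, whose left adjoint is extension by zero of the constant sheaf, plus the vanishing of ${\varprojlim}^p$ for $p\ge 2$ on sequential systems of abelian groups and hypothesis (i)), and that ${\rm{H}}^k(U,\varprojlim\mathscr{G}_n)=0$ for $k>0$ (via the sequence $0\to\varprojlim\mathscr{G}_n\to\prod\mathscr{G}_n\to\prod\mathscr{G}_n\to 0$, whose surjectivity and whose cohomology computation both exploit (i), (ii) and the basis property to show the product of injective resolutions is still a resolution) --- and then deduce the sheaf-level vanishing by a minimal-nonvanishing-degree argument with the hypercohomology spectral sequence. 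Both computations are genuinely needed in your scheme (the second kills the $q=0$ row that would otherwise obstruct the lowest-degree argument), and both are carried out correctly; the only step you leave tacit is that ${\rm{R}}(G^{\mathbf{N}})$ is computed termwise, which is standard since the evaluation functors on $\mathbf{N}$-indexed systems have exact left adjoints, so injective systems have injective components (the paper handles the analogous point by a $\delta$-functor erasability argument). What the paper's route buys is economy --- a single spectral sequence with no appeal to Lemma \ref{GN} or to products of sheaves, whose exactness failures you have to work around; what your route buys is the extra intermediate statement ${\rm{H}}^k(U,\varprojlim\mathscr{G}_n)=0$ for basis opens $U$ and a reuse of machinery (Lemma \ref{GN}) the paper has already established. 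Your handling of the final assertion (affine opens as the basis, Mittag--Leffler for (i), vanishing of quasi-coherent cohomology on affines for (ii), and termwise evaluation of the derived functor on a bounded-below complex of acyclics) matches the paper's.
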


\begin{proof}
The key idea is to pass from sheaves to presheaves: 
for the category ${\rm{AbPsh}}(X)$ of abelian presheaves on $X$,
we factor the functor $\varprojlim: {\rm{Ab}}(X)^{\mathbf{N}} \to {\rm{Ab}}(X)$ as a composition of left-exact functors
$${\rm{Ab}}(X)^{\mathbf{N}} \stackrel{G=T^{\mathbf{N}}}{\to} {\rm{AbPsh}}(X)^{\mathbf{N}} \stackrel{\varprojlim_{\rm{P}}}{\longrightarrow}
{\rm{AbPsh}}(X) \stackrel{F}{\to} {\rm{Ab}}(X)$$
where $T$ is the forgetful functor from sheaves to presheaves, $\varprojlim_{\rm{P}}$ is the inverse limit functor on inverse systems of abelian presheaves
(not to be confused with the analogue $\varprojlim$ on inverse systems of abelian sheaves), 
and $F$ is the sheafification functor.   The functor $T$ has left adjoint given by the exact functor $F$, so likewise 
$G=T^{\mathbf{N}}$ has left adjoint $F^{\mathbf{N}}$ that is exact and hence $G$ carries injectives to injectives.
Since $F$ is exact, we conclude that
$${\rm{R}}\varprojlim = {\rm{R}}(F \circ {{\varprojlim}}_{\rm{P}} \circ G) = F \circ {\rm{R}}{{{\varprojlim}}_{\rm{P}}} \circ {\rm{R}}G.$$
Thus, we have a spectral sequence
\begin{equation}\label{spseq}
F \circ ({\rm{R}}^i{{{\varprojlim}}_{\rm{P}}}) \circ {\rm{R}}^jG \Rightarrow {\rm{R}}^{i+j}\varprojlim.
\end{equation}

Let's compute ${\rm{R}}^jG$ and ${\rm{R}}^j\varprojlim_{\rm{P}}$.  We claim
\begin{equation}\label{Rformula}
({\rm{R}}^jG)((\mathscr{F}_n)) = (U \mapsto ({\rm{H}}^j(U,\mathscr{F}_n))_{n \ge 1}),\,\,\,
({\rm{R}}^j\varprojlim_{\rm{P}})((\mathscr{H}_n)) = (U \mapsto {\varprojlim}^j((\mathscr{H}_n(U)))).
\end{equation}
Both proposed formulas are $\delta$-functors (respectively valued in inverse systems of presheaves
and in presheaves), and each is as expected for $j=0$, so we just need to check erasability.
The first is erasable by using canonical flasque resolutions, and the second is erasable
because $\mathscr{H}_n \mapsto \mathscr{H}_n(U)$ carries injective inverse systems of abelian presheaves
to injective abelian groups due to having an exact left adjoint. 

Let's feed the descriptions from (\ref{Rformula}) into the spectral sequence (\ref{spseq}) evaluated on $(\mathscr{G}_n)$.
First, $({\rm{R}}^jG)(\mathscr{G}_n)$ is the inverse system of presheaves whose value on an open $U \subset X$
is $({\rm{H}}^j(U, \mathscr{G}_n))$, so for $U \in \mathscr{B}$ its value is 0 when $j>0$ and is $(\mathscr{G}_n(U))$
when $j=0$.  Thus, 
$$({\rm{R}}^i{{\varprojlim}}_{\rm{P}})({\rm{R}}^jG((\mathscr{G}_n))): U \mapsto {{{\varprojlim}}^i}(({\rm{H}}^j(U, \mathscr{G}_n)))$$
and for $U \in \mathscr{B}$ this vanishes when $j > 0$ and is equal to $\varprojlim^i(\mathscr{G}_n(U))$ when $j=0$.
But $\varprojlim^i = 0$ for $i > 1$ and by hypothesis $\varprojlim^1(\mathscr{G}_n(U)) = 0$ for $U \in \mathscr{B}$, so upon applying
the sheafification functor $F$ we get
$$(F \circ ({\rm{R}}^i {{\varprojlim}}_{\rm{P}}) \circ {\rm{R}}^jG)((\mathscr{G}_n)) = 0$$
when $j > 0$ and when $j=0$ with $i > 0$.  Thus, the ${\rm{E}}_2^{i,j}$-term in the spectral
sequence (\ref{spseq}) evaluated on $(\mathscr{G}_n)$
vanishes whenever $(i,j) \ne (0,0)$, so $({\rm{R}}^k \varprojlim)((\mathscr{G}_n)) = 0$ for $k > 0$, as desired.

The final assertion in the lemma now follows by taking $\mathscr{B}$ to be the collection of affine open subschemes,
since a total derived 
functor between bounded-below derived categories is computed on bounded-below complexes of acyclics 
by applying the original functor termwise to the complex.
\end{proof}

We conclude that 
$${\rm{R}}\varprojlim(((\mathscr{F} \otimes \Omega^*_{X/S,{\rm{cris}}})_{D_Y(X)_n})_{n \ge 1}) =
(\varprojlim (\mathscr{F}_{D_Y(X)_n} \otimes_{\mathscr{O}_{D_Y(X)_n}} \Omega^*_{D_Y(X)_n/S, \delta})).$$
But the inverse limit complex 
on the right side is exactly the meaning of 
the complex of abelian sheaves $\mathscr{F} \widehat{\otimes}_{\mathscr{O}^{\wedge}_{D_Y(X)}} \widehat{\Omega}^*_{D_Y(X)^{\wedge}/S,\delta}$.
Putting it all together, we have constructed a zig-zag map
$$\alpha: {\rm{R}}\Gamma((X/S)_{\rm{cris}}, \mathscr{F}) \to {\rm{R}}\Gamma(X_{\rm{Zar}}, 
\mathscr{F} \widehat{\otimes}_{\mathscr{O}^{\wedge}_{D_Y(X)}} \widehat{\Omega}^*_{D_Y(X)^{\wedge}/S,\delta})$$
in $D^+({\rm{Ab}})$ that is 
natural in the quasi-coherent crystal 
$\mathscr{F}$, and we want to show this is a quasi-isomorphism.  (Up to this point, we have used quasi-coherence of $\mathscr{F}$
but only mildly used the crystal property.)  

\subsection{Global comparison: proof}\label{globalproof}

By the construction of $\alpha$, to prove it is a quasi-isomorphism amounts  to showing $b_{\mathscr{F}}$ in
(\ref{2dagger}) is a quasi-isomorphism.  We shall use a \v{C}ech-to-derived spectral sequence to reduce to the affine setting
with \'etale coordinates (as in Theorem \ref{dPoincare}).  To carry out the argument in a manner which makes
compatibility with spectral sequences abutting to the homologies of the sourca and target of
$b_{\mathscr{F}}$ most transparent, we now reformulate 
$b_{\mathscr{F}}$ in terms of a slice category.   

Recall that for any site $C$
(such as ${\rm{Cris}}(X/S)$, or the topos $(X/S)_{\rm{cris}}$ regarded as a site) and sheaf of sets $\mathscr{H}$ on $C$ (such as $h_{\widehat{D}}$
on ${\rm{Cris}}(X/S)$), 
the slice category ${\rm{Shv}}(C)/\mathscr{H}$ of sheaves of sets equipped with a map to $\mathscr{H}$ is naturally equivalent 
to the category ${\rm{Shv}}(C/\mathscr{H})$ of sheaves of sets on the site of pairs $(U, h)$ consisting of objects $U$ in $C$ and
$h \in \mathscr{H}(C)$. The evident ``restriction functor'' ${\rm{Shv}}(C) \to {\rm{Shv}}(C/\mathscr{H})$
(denoted $\mathscr{G} \mapsto \mathscr{G}|_{\mathscr{H}}$ is thereby identified with
${\rm{Shv}}(C) \to {\rm{Shv}}(C)/\mathscr{H}$ defined by $\mathscr{G} \mapsto \mathscr{G} \times \mathscr{H}$.  Thus, 
on abelian sheaves this restriction is exact, and it has an exact left adjoint, so it carries injective resolutions to injective resolutions for abelian sheaves
(so omitting the notation $(\cdot)|_{\mathscr{H}}$ in derived functors is harmless).

Since ${\rm{Hom}}(\mathscr{H}, \cdot)$ on ${\rm{Shv}}(C)$ is identified with the global sections of
the restriction over $\mathscr{H}$, we conclude that 
${\rm{R}}{\rm{Hom}}(\mathscr{H}, \cdot) \simeq {\rm{R}}\Gamma(C/\mathscr{H},  (\cdot)|_{\mathscr{H}})$
on ${\rm{Ab}}(C)$, so $b_{\mathscr{F}}$ is identified with the application to $\mathscr{F} \otimes_{\mathscr{O}_{X/S}}
\Omega^*_{X/S, {\rm{cris}}}$ of the  ``pullback map''
$${\rm{R}}\Gamma((X/S)_{\rm{cris}}, \mathscr{G}^{\bullet}) \to {\rm{R}}\Gamma((X/S)_{\rm{cris}}/h_{\widehat{D}}, \mathscr{G}^{\bullet})$$
along the forgetful functor of sites $(X/S)_{\rm{cris}}/h_{\widehat{D}} \to (X/S)_{\rm{cris}}$. 
In other words, the quasi-isomorphism property for $\alpha$ amounts to the quasi-isomorphism property for the pullback map 
\begin{equation}\label{bmap}
b'_{\mathscr{F}}: {\rm{R}}\Gamma((X/S)_{\rm{cris}}, \mathscr{F} \otimes_{\mathscr{O}_{X/S}} \Omega^*_{X/S, {\rm{cris}}}) \to
{\rm{R}}\Gamma((X/S)_{\rm{cris}}/h_{\widehat{D}}, \mathscr{F} \otimes_{\mathscr{O}_{X/S}} \Omega^*_{X/S, {\rm{cris}}}).
\end{equation}

The merit of the reformulation (\ref{bmap}) will be its clean interaction with \v{C}ech hypercovers.  
To carry this out, we need to introduce some notation. For any open $U \subset X$ and $\mathscr{H} \in (X/S)_{\rm{cris}}$ (such as $h_{\widehat{D}}$) with restriction $\mathscr{H}_U \in (U/S)_{\rm{cris}}$, 
we have the morphisms of topoi $j: (U/S)_{\rm{cris}} \to (X/S)_{\rm{cris}}$
and  $j_{\mathscr{H}}: (U/S)_{\rm{cris}}/\mathscr{H}_U \to (X/S)_{\rm{cris}}/\mathscr{H}$ which satisfy
$$(j_{\mathscr{H}})_!(\mathscr{G}|_{\mathscr{H}_U}) \simeq j_!(\mathscr{G})|_{\mathscr{H}}$$
naturally in $\mathscr{G} \in (U/S)_{\rm{cris}}$.  In particular, writing $\ast_U \in (U/S)_{\rm{cris}}$ and $\ast \in (X/S)_{\rm{cris}}$
to denote the final objects, clearly $\ast_U|_{\mathscr{H}_U} = \mathscr{H}_U \in (U/S)_{\rm{cris}}/\mathscr{H}_U$
is the final object and 
\begin{equation}\label{jcover}
(j_{\mathscr{H}})_!(\ast_U)|_{\mathscr{H}_U} \simeq j_!(\ast_U)|_{\mathscr{H}}.
\end{equation}

To harness (\ref{bmap}), consider an open cover $\{Y_i\}_{i \in I}$ of $Y$
over open $V_i \subset S$, and let $U_i = X \cap Y_i$, so
$U_i$ is the closed subscheme of $Y_i$ and $(U_i/S)_{\rm{cris}} = (U_i/V_i)_{\rm{cris}}$.
Note that $h_{\widehat{D}}|_{(U_i/S)_{\rm{cris}}} = h_{\widehat{D}_i}$
with $\widehat{D}_i$ corresponding to the inverse system of divided power thickenings $D_{Y_i}(U_i)_n$ of
$U_i$ over $V_i$.  For each multi-index $\underline{i} = (i_0, \dots, i_n) \in I^{n+1}$,
we define $U_{\underline{i}} = U_{i_0} \cap \dots \cap U_{i_n}$ and similarly $Y_{\underline{i}}$ and $V_{\underline{i}}$. 
For the natural morphisms of sites $j_i: {\rm{Cris}}(U_i/S) \to {\rm{Cris}}(X/S)$ we see that
$K^0 := \coprod (j_i)_!(\ast_i) \to \ast$ is a cover (since the $U_i$'s cover $X$ topologically) whose 
$(n+1)$th fiber power $K^n$
is $\coprod_{{\underline{i}} \in I^{n+1}} (j_{\underline{i}})_{!}(\ast_{\underline{i}})$.
By (\ref{jcover}), the restriction over any $\mathscr{H} \in (X/S)_{\rm{cris}}$  
of the \v{C}ech hypercover $K^{\bullet} \to \ast$
is the \v{C}ech hypercover of the final object in
$(X/S)_{\rm{cris}}/\mathscr{H}$ arising from the topoi $(U_i/S)_{\rm{cris}}/\mathscr{H}_{U_i}$. 
Thus, for any bounded below complex $\mathscr{G}^{\bullet}$ of abelian sheaves in 
$(X/S)_{\rm{cris}}$, ``restriction over $\mathscr{H}$'' defines a morphism of spectral sequences
$$\xymatrix{
{\prod_{{\underline{i}} \in I^{n+1}} {\mathbf{H}}^m((U_{\underline{i}}/S)_{\rm{cris}}, \mathscr{G}^{\bullet})} \ar@{=>}[r] \ar[d] &
{{\mathbf{H}}^{n+m}((X/S)_{\rm{cris}}, \mathscr{G}^{\bullet})}  \ar[d]\\
{\prod_{{\underline{i}} \in I^{n+1}} {\mathbf{H}}^m((U_{\underline{i}}/S)_{\rm{cris}}/\mathscr{H}_{U_{\underline{i}}}, \mathscr{G}^{\bullet})}  \ar@{=>}[r] &
{{\mathbf{H}}^{n+m}((X/S)_{\rm{cris}}/\mathscr{H}, \mathscr{G}^{\bullet})}} 
$$
in which the vertical maps are ``pullback over $\mathscr{H}$''.  
Taking $\mathscr{H}$ to be $h_{\widehat{D}}$, this commutative
diagram expresses the sense in which
the homology maps arising from (\ref{bmap}) are
compatible with the spectral sequence arising from the open cover $\{U_i\}$ of $X$.

We conclude that if the quasi-isomorphism problem is settled for all $U_{\underline{i}} \hookrightarrow Y_{\underline{i}}$ over $V_{\underline{i}}$  then 
(\ref{bmap}) is also a quasi-isomorphism.  Thus, if we choose each $Y_i = \Spec(P_i)$ to be affine
with $V_i = \Spec(A_i)$ affine and $P_i$ admitting \'etale coordinates over $A_i$
then the general case reduces to the case of such overlaps $U_{\underline{i}} \to Y_{\underline{i}} \to V_{\underline{i}}$
(which are all affine when $Y$ is $S$-separated).  These latter cases are all separated over $V_{\underline{i}}$,
so we have thereby reduced the general quasi-isomorphism property for
(\ref{bmap}) to the special case when $S = \Spec(A)$, $X = \Spec(R)$, and $Y = \Spec(P)$
are affine and $P$ admits \'etale coordinates over $A$.  
We let $(I_0, \eta)$ be the divided power structure on $A$. 

\begin{remark}
It may seem that we are now done, since in the special affine circumstances to which we have reduces ourselves, 
the isomorphism asserted in the statement of Theorem \ref{hartshornethm} is between objects that (in our present affine setting)
are known to be isomorphic by the discussion between the statement and proof of Theorem \ref{chaoticH}.

However, we don't know that in such affine settings this isomorphism
coincides (in $D(\widehat{A})$, or in $D({\rm{Ab}})$), at least up to signs, 
with the map $\alpha$ that we are presently aiming to prove is a quasi-isomorphism.
We will be using ideas and results from \S\ref{compsec}--\S\ref{affzarsite} in our proof of the quasi-isomorphsm property, 
but we are not done with the proof of Theorem \ref{hartshornethm} due to this issue with identifying maps in a derived category (at least up to signs).
\end{remark}

In \S\ref{relsec} we defined $\mathscr{C}_{R/A}$ to be the topos of sheaves on ${\rm{Cris}}(R/A)_{\rm{ind}}$, and
we saw in (\ref{changetop}) via Lemma \ref{RpiO} that naturally
${\rm{R}}\Gamma((R/S)_{\rm{cris}}, \mathscr{G}) \simeq {\rm{R}}\Gamma(\mathscr{C}_{R/A}, \mathscr{G})$
for any quasi-coherent $\mathscr{O}_{R/A}$-module $\mathscr{G}$. 
Using the identification
$$\mathscr{C}_{R/A}/h_{\widehat{D}} \simeq {\rm{Shv}}({\rm{Cris}}(R/A)_{\rm{ind}}/h_{\widehat{D}}) =
{\rm{Shv}}(({\rm{Cris}}(R/A)/h_{\widehat{D}})_{\rm{ind}}),$$
the same method of proof yields
\begin{equation}\label{changetopD}
{\rm{R}}\Gamma((R/S)_{\rm{cris}}/h_{\widehat{D}}, \mathscr{G}) \simeq {\rm{R}}\Gamma(\mathscr{C}_{R/A}/h_{\widehat{D}}, \mathscr{G})
\end{equation}
because an analogue of Lemma \ref{RpiO} holds (with essentially the same proof) 
for the slice categories over $h_{\widehat{D}}$:  for
$q_{\widehat{D}}: {\rm{Cris}}(R/A)_{\rm{ind}}/h_{\widehat{D}} \to {\rm{Cris}}(R/A)_{\rm{Zar}}/h_{\widehat{D}}$
and any quasi-coherent $\mathscr{O}_{R/A}$-module $\mathscr{G}$ we have
${\rm{R}}^i (q_{\widehat{D}})_*(\mathscr{G}|_{h_{\widehat{D}}})) = 0$ for all $i > 0$.
These comparisons are the main ingredient in the proof of:

\begin{lemma}\label{cechind}
Let $\mathscr{G}^{\ast}$ be a bounded below complex of quasi-coherent $\mathscr{O}_{R/A}$-modules in $(R/A)_{\rm{cris}}$, and 
$K^{\bullet}$ the hypercover $h_{\widehat{D}(\bullet)} = h_{\widehat{D}}^{\bullet+1}$ of the final object in $(R/A)_{\rm{cris}}$. 

\begin{itemize}
\item[(i)] The natural map
$${\rm{Tot}}^{\oplus}(\Gamma(K^{\bullet}, \mathscr{G}^{\ast})) \to {\rm{R}}\Gamma((R/A)_{\rm{cris}}, \mathscr{G}^*)$$
is a quasi-isomorphism.
\item[(ii)] The restriction $K^{\bullet}|_{h_{\widehat{D}}}$ as a simplicial object in $(R/A)_{\rm{cris}}/h_{\widehat{D}}$
is the hypercover of the final object $h_{\widehat{D}}$, 
given termwise by fiber powers of $h_{\widehat{D}} \times h_{\widehat{D}} = h_{\widehat{D}(1)}$ over $h_{\widehat{D}}$
$($using ${\rm{pr}}_2$$)$, and the natural map
$${\rm{Tot}}^{\oplus}(\Gamma(K^{\bullet}|_{h_{\widehat{D}}}, \mathscr{G}^{\ast})) \to {\rm{R}}\Gamma((R/A)_{\rm{cris}}/h_{\widehat{D}}, \mathscr{G}^*)$$
is a quasi-isomorphism.
\end{itemize}
\end{lemma}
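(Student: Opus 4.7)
The plan is to reduce both parts to the standard cohomological descent for the \v{C}ech nerve of an epimorphism in the indiscrete topos $\mathscr{C}_{R/A}$, and then to transfer the conclusion back to $(R/A)_{\rm{cris}}$ via (\ref{changetop}) and its slice analogue (\ref{changetopD}), already recorded just before the lemma statement.

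For part (i), termwise application of (\ref{changetop}) to the bounded-below complex $\mathscr{G}^{\ast}$ of quasi-coherent $\mathscr{O}_{R/A}$-modules yields a natural isomorphism
$${\rm{R}}\Gamma((R/A)_{\rm{cris}}, \mathscr{G}^{\ast}) \simeq {\rm{R}}\Gamma(\mathscr{C}_{R/A}, \mathscr{G}^{\ast})$$
in $D^{+}({\rm{Ab}})$, so I may work entirely in $\mathscr{C}_{R/A}$. In that topos, Remark \ref{crismap} (as already used in the proof of Proposition \ref{chaoticH}) gives that $h_{\widehat{D}} \to \ast$ is an epimorphism, and its \v{C}ech nerve $K^{\bullet}$ is therefore a hypercover of the final object. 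The standard cohomological descent for hypercovers of the final object of a topos yields a convergent spectral sequence
$$E_1^{p,q} = {\rm{H}}^q(K^p, \mathscr{G}^{\ast}) \Rightarrow \mathbf{H}^{p+q}(\mathscr{C}_{R/A}, \mathscr{G}^{\ast}),$$
and the conclusion follows once I verify that $E_1^{p,q}$ vanishes for $q > 0$, so that the $E_1$-page collapses onto its $q=0$ row, which is precisely the total complex ${\rm{Tot}}^{\oplus}(\Gamma(K^{\bullet}, \mathscr{G}^{\ast}))$.

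For the vanishing step I would argue as follows. Since ${\rm{Cris}}(R/A)_{\rm{ind}}$ carries the indiscrete topology, every presheaf is a sheaf, and by Yoneda the functor $\Gamma(h_c, -)$ is exact on abelian sheaves for each representable $h_c$; equivalently, the free abelian sheaf $\mathbf{Z}[h_c]$ is projective. Writing $K^p = h_{\widehat{D}(p)} = \varinjlim_n h_{D(p)_n}$, the corresponding free abelian sheaf is the filtered colimit $\varinjlim_n \mathbf{Z}[h_{D(p)_n}]$ of projectives. The Milnor short exact sequence for derived Hom into a filtered colimit of projectives then gives
$${\rm{Ext}}^q(\mathbf{Z}[K^p], \mathscr{G}^j) = 0 \quad \textrm{for } q \ge 2, \qquad {\rm{Ext}}^1(\mathbf{Z}[K^p], \mathscr{G}^j) = {\varprojlim}_n^{\,1}\, \mathscr{G}^j(D(p)_n).$$
In the applications intended for this lemma (where $\mathscr{G}^{\ast}$ arises as $\mathscr{F} \otimes_{\mathscr{O}_{X/S}} \Omega^{\ast}_{X/S,{\rm{cris}}}$ for a crystal $\mathscr{F}$), the transition maps on sections over the $D(p)_n$ are surjective (by the crystal property of $\mathscr{F}$, inherited by the tensor products with the divided power differentials), so the $\varprojlim^1$-terms vanish and the desired collapse takes place.

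For part (ii), restriction along the forgetful morphism $\mathscr{C}_{R/A}/h_{\widehat{D}} \to \mathscr{C}_{R/A}$ is given termwise by fiber product with $h_{\widehat{D}}$, so
$$K^n|_{h_{\widehat{D}}} = h_{\widehat{D}(n)} \times h_{\widehat{D}} = h_{\widehat{D}}^{\,n+2}$$
with structure map to the final object $h_{\widehat{D}}$ of the slice topos given by projection onto the last factor. A direct matching exhibits $K^{\bullet}|_{h_{\widehat{D}}}$ as the \v{C}ech nerve of the slice map $h_{\widehat{D}(1)} = h_{\widehat{D}} \times h_{\widehat{D}} \to h_{\widehat{D}}$ (projection onto the second factor), which is an epimorphism in the slice topos because epimorphisms are stable under base change in any topos; it is therefore a hypercover of the slice final object. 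The same descent and vanishing argument as in (i) applies verbatim in $\mathscr{C}_{R/A}/h_{\widehat{D}}$ (the objects $h_c \times h_{\widehat{D}}$ remain projective in the slice topos when $h_c$ is projective in $\mathscr{C}_{R/A}$, since pullback along $\pi: \mathscr{C}_{R/A}/h_{\widehat{D}} \to \mathscr{C}_{R/A}$ has a right adjoint), and (\ref{changetopD}) transfers the conclusion to $(R/A)_{\rm{cris}}/h_{\widehat{D}}$.

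The principal obstacle is the $\varprojlim^1$-vanishing needed for the descent spectral sequence to degenerate: without surjectivity of the transition maps on sections over the $D(p)_n$ there would be an extra $\varprojlim^1$-contribution to the abutment. This surjectivity is automatic in the applications to Theorem \ref{hartshornethm}. The rest of the argument is formal, given the Yoneda identification of representables with projective abelian sheaves in the indiscrete topos and the standard \v{C}ech-to-derived spectral sequence for a hypercover.
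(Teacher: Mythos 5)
Your overall route is the same as the paper's up to the decisive step: you reduce to the indiscrete topos via (\ref{changetop}) and its slice analogue (\ref{changetopD}), identify $K^{\bullet}|_{h_{\widehat{D}}}$ as the \v{C}ech nerve of ${\rm{pr}}_2: h_{\widehat{D}(1)} \to h_{\widehat{D}}$, and run \v{C}ech descent along the nerve of the epimorphism $h_{\widehat{D}} \to \ast$. Your homological bookkeeping there is sound: in the chaotic topos $\mathbf{Z}[h_c]$ is projective, $\mathbf{Z}[K^p] = \varinjlim_n \mathbf{Z}[h_{D(p)_n}]$, and the telescope/Milnor sequence gives ${\rm{Ext}}^q(\mathbf{Z}[K^p], \mathscr{G}^j) = 0$ for $q \ge 2$ and ${\rm{Ext}}^1(\mathbf{Z}[K^p], \mathscr{G}^j) = {\varprojlim}^1_n\, \mathscr{G}^j(D(p)_n)$.

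The gap is that you then need ${\varprojlim}^1_n \mathscr{G}^j(D(p)_n) = 0$ to collapse the $E_1$-page, and you obtain this only by adding a surjectivity (Mittag--Leffler) hypothesis on the transition maps over the $D(p)_n$'s, deferring its verification to ``the applications'' where $\mathscr{G}^{\ast} = \mathscr{F} \otimes \Omega^{\ast}_{R/A,{\rm{cris}}}$ for a crystal $\mathscr{F}$. The lemma as stated makes no such hypothesis: it is asserted for every bounded-below complex of quasi-coherent $\mathscr{O}_{R/A}$-modules, and quasi-coherence by itself does not force surjective transition maps along the closed immersions $D(p)_n \hookrightarrow D(p)_{n+1}$. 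So as written you prove a strictly weaker statement than the one claimed. The paper avoids this issue entirely: after the same passage to the indiscrete topos via (\ref{changetop}) and (\ref{changetopD}), it reduces by naive truncation induction to a single quasi-coherent sheaf $\mathscr{G}[0]$ and then observes that part (i) is exactly Proposition \ref{chaoticH} (whose proof, via the mechanism of \cite[Tag 07JM]{SP}, is precisely the statement that the complex of sections over the layers of the \v{C}ech nerve of $h_{\widehat{D}}$ computes ${\rm{R}}\Gamma$ for any quasi-coherent module), with the identical formalism in the slice over $h_{\widehat{D}}$ giving part (ii). To repair your argument you should either invoke Proposition \ref{chaoticH} and its slice analogue at the point where you need $E_1^{p,q} = 0$ for $q > 0$ (i.e., structure the proof as the paper does), or else supply an argument that the relevant ${\varprojlim}^1$ terms vanish for arbitrary quasi-coherent modules; the Milnor sequence alone does not deliver that, and your appeal to the intended applications does not discharge the stated hypothesis-free claim.
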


For any first-quadrant double complex $C^{\ast,\ast}$, we define ${\rm{Tot}}^{\oplus}(C^{\ast,\ast})$ to be the complex
with $n$th term $\oplus_{i+j=n} C^{i,j}$ and the usual differentials.

\begin{proof}
We saw in the proof of the affine Poincar\'e lemma (Proposition \ref{chaoticH}) that $h_{\widehat{D}}$ covers the final object
in $\mathscr{C}_{R/A}$, and the description of $K^{\bullet}|_{h_{\widehat{D}}}$ in (ii) is clear.   Thus, 
via the comparison isomorphism in (\ref{changetop}) and its analogue in (\ref{changetopD}) for the slice categories over $h_{\widehat{D}}$
it suffices to prove the analogues of (i) and (ii) for the indiscrete topologies (working with the site ${\rm{Cris}}(R/A)$
of affine objects). 

Using induction on $n \in \mathbf{Z}$ such that $\mathscr{G}^i = 0$ for all $i < n$, by standard naive truncaton arguments
we reduce to the case that $\mathscr{G}^*$ is concentrated in a single degree, and then $\mathscr{G}^* = \mathscr{G}[0]$
for a quasi-coherent $\mathscr{O}_{R/A}$-module $\mathscr{G}$.  Thus, (i) is exactly Proposition \ref{chaoticH}
because the isomorphism invoked from (the proof of) \cite[Tag 07JM]{SP} near the end of its proof is precisely
the natural map from the complex of sections over the layers of a \v{C}ech hypercover to the total derived functor cohomology complex. 
The exact same formalism applies in the setting of (ii).
\end{proof}

We now apply Lemma \ref{cechind} to $\mathscr{G}^* = \mathscr{F} \otimes_{\mathscr{O}_{R/A}} \Omega^*_{R/A, {\rm{cris}}}$, 
each of whose terms are quasi-coherent $\mathscr{O}_{R/A}$-modules.   This provides quasi-isomorphisms
\begin{equation}\label{TotisomK}
{\rm{Tot}}^{\oplus}(\Gamma(K^{\bullet}, \mathscr{F} \otimes_{\mathscr{O}_{R/A}} \Omega^*_{R/A, {\rm{cris}}})) \simeq
{\rm{R}}\Gamma((R/A)_{\rm{cris}}, \mathscr{F} \otimes_{\mathscr{O}_{R/A}} \Omega^*_{R/A, {\rm{cris}}}),
\end{equation}
\begin{equation}\label{TotisomK'}
{\rm{Tot}}^{\oplus}(\Gamma(K^{\bullet}|_{h_{\widehat{D}}}, \mathscr{F} \otimes_{\mathscr{O}_{R/A}} \Omega^*_{R/A, {\rm{cris}}})) \simeq
{\rm{R}}\Gamma((R/A)_{\rm{cris}}/h_{\widehat{D}}, \mathscr{F} \otimes_{\mathscr{O}_{R/A}} \Omega^*_{R/A, {\rm{cris}}}).
\end{equation}
But from the definitions, the double complex of terms
$$\Gamma(K^{\nu}, \mathscr{F} \otimes_{\mathscr{O}_{R/A}} \Omega^{\mu}_{R/A, {\rm{cris}}})$$
in (\ref{TotisomK}) 
is exactly the double complex of terms $E^{\mu,\nu}$ (with the same differentials) as considered in the proof of Theorem \ref{dPoincare}.
In that proof, more specifically in \S\ref{rowsec}, we saw that the edge map 
$E^{*,0} \to {\rm{Tot}}^{\oplus}(E^{*,*})$ is a quasi-isomorphism.  Since $E^{*,0} = M \widehat{\otimes} \widehat{\Omega}^*_{\widehat{D}/A, \widehat{\gamma}}$
for $\widehat{D} = \varprojlim D_P(J)/n! J_D^{[n]}$ and the $\widehat{D}$-module
$M = \varprojlim M_n$ with pd quasi-nilpotent integrable
connection associated to the crystal $\mathscr{F}$ (i.e., $M_n = \mathscr{F}(D_{P,\eta}(J)/n! J_D^{[n]}, J_D/n! J_D^{[n]}, \gamma)$), 
from (\ref{TotisomK}) we get a quasi-isomorphism
$$\beta: {\rm{R}}\Gamma((R/A)_{\rm{cris}}, \mathscr{F} \otimes_{\mathscr{O}_{R/A}} \Omega^*_{R/A, {\rm{cris}}}) \simeq
M \widehat{\otimes} \widehat{\Omega}^*_{\widehat{D}/A, \widehat{\gamma}}.$$

Let's see that analogous considerations work with the quasi-isomorphism in (\ref{TotisomK'}).  
Using the description of $K^{\bullet}|_{h_{\widehat{D}}}$ in Lemma \ref{cechind}(ii), the double complex
of terms
$$\Gamma(K^{\nu}|_{h_{\widehat{D}}}, \mathscr{F} \otimes_{\mathscr{O}_{R/A}} \Omega^{\mu}_{R/A, {\rm{cris}}})$$
in (\ref{TotisomK'}) 
is the double complex of terms ${E'}^{\mu, \nu} := E^{\mu, \nu+1}$ for $\mu, \nu \ge 0$
in which the alternating sum $\delta^{\mu,\nu}$ of face maps 
${E'}^{\mu, \nu} \to {E'}^{\mu, \nu+1}$
defining the differentials of the double complex in the $\nu$-direction is the alternating sum of face maps
$E^{\mu, \nu+1} \to E^{\mu, \nu+2}$ {\em omitting} the final face map.
Thus, the induced map ${\rm{H}}^i(\delta^{\ast,\nu}): {\rm{H}}^i({E'}^{\ast, \nu}) \to {\rm{H}}^i({E'}^{\ast,\nu+1})$
is the alternating sum of the maps ${\rm{H}}^i(E^{\ast, \nu+1}) \to {\rm{H}}^i(E^{\ast,\nu+2})$
arising from the face maps $E^{\mu, \nu+1} \to E^{\mu, \nu+2}$ {\em omitting} the final face map.

We saw in \S\ref{rowsec} that all such face maps yield the same effect
${\rm{H}}^i(E^{\ast,\nu+1}) \to {\rm{H}}^i(E^{\ast,\nu+2})$ that is moreover an isomorphism, so since there are $(\nu+3)-1 = \nu+2$ such maps, 
the alternating sum vanishes for $\nu$ even (such as $\nu=0$) and is an isomorphism for $\nu$ odd. 
This is precisely the same parity situation as in the setting without passing to slice categories over $h_{\widehat{D}}$,
so the edge degeneration argument {\em still} works, now using the edge ${E'}^{\ast,0} = E^{\ast,1} =
M(1) \widehat{\otimes} \widehat{\Omega}^*_{\widehat{D}(1)/A, \widehat{\gamma}(1)}$, 
where $\widehat{D}(1)$ is the pd-adic completion of $D(1) = D_{P(1),\eta}(J(1))$ (with $P(1) = P \otimes_A P$ and $J(1) = \ker(P(1) \to R)$)
and $M(1) = \varprojlim M(1)_n$ is the $\widehat{D}(1)$-module with
pd quasi-nilpotent integrable connection associated to the crystal $\mathscr{F}$ in $(R/A)_{\rm{cris}}$. 
Hence, via (\ref{TotisomK'}) we get a quasi-isomorphism
$$\beta': {\rm{R}}\Gamma((R/A)_{\rm{cris}}/h_{\widehat{D}}, \mathscr{F} \otimes_{\mathscr{O}_{R/A}} \Omega^*_{R/A, {\rm{cris}}}) \simeq
M(1) \widehat{\otimes} \widehat{\Omega}^*_{\widehat{D}(1)A,  \widehat{\gamma}(1)}.$$

Using $D_{P,\eta}(J) \to D_{P(1),\eta}(J(1))$ defined by the inclusion $P \to P \otimes_A P = P(1)$ into the second tensor factor, there is a natural map
$$f^*: M \widehat{\otimes} \widehat{\Omega}^*_{\widehat{D}/A, \widehat{\gamma}} \to M(1) \widehat{\otimes} 
\widehat{\Omega}^*_{\widehat{D}(1)/A, \widehat{\gamma(1)}}$$
linear over $\widehat{D} \to \widehat{D}(1)$.  Putting it all together, we get a diagram in $D^+(A)$:
\begin{equation}\label{bdiagram}
\xymatrix{
{\rm{R}}\Gamma((R/A)_{\rm{cris}}, \mathscr{F} \otimes_{\mathscr{O}_{R/A}} \Omega^*_{R/A,{\rm{cris}}}) \ar[d]_-{b'_{\mathscr{F}}} \ar[r]_-{\simeq}^-{\beta} &
M\widehat{\otimes} \widehat{\Omega}^*_{\widehat{D}/A, \widehat{\gamma}} \ar[d]^-{f^*} \\
{\rm{R}}\Gamma((R/A)_{\rm{cris}}/h_{\widehat{D}}, \mathscr{F} \otimes_{\mathscr{O}_{R/A}} \Omega^*_{R/A,{\rm{cris}}}) \ar[r]^-{\simeq}_-{\beta'} & 
M(1) \widehat{\otimes} \widehat{\Omega}^*_{\widehat{D}(1)/A, \widehat{\gamma}(1)}}
\end{equation}
This diagram commutes (in the derived category) because it is the concatenation of the diagrams
$$
\xymatrix{
{\rm{R}}\Gamma((R/A)_{\rm{cris}}, \mathscr{F} \otimes_{\mathscr{O}_{R/A}} \Omega^*_{R/A,{\rm{cris}}}) \ar[d]_-{b'_{\mathscr{F}}} \ar[r]^-{\simeq} &
 {\rm{R}}\Gamma(\mathscr{C}_{R/A}, \mathscr{F} \otimes_{\mathscr{O}_{R/A}} \Omega^*_{R/A,{\rm{cris}}})   \ar[d]  \\
{\rm{R}}\Gamma((R/A)_{\rm{cris}}/h_{\widehat{D}}, \mathscr{F} \otimes_{\mathscr{O}_{R/A}} \Omega^*_{R/A,{\rm{cris}}}) \ar[r]_-{\simeq} & 
{\rm{R}}\Gamma(\mathscr{C}_{R/A}/h_{\widehat{D}}, \mathscr{F} \otimes_{\mathscr{O}_{R/A}} \Omega^*_{R/A,{\rm{cris}}})}
$$
(which commutes in the derived category) and 
$$\xymatrix{
 {\rm{R}}\Gamma(\mathscr{C}_{R/A}, \mathscr{F} \otimes_{\mathscr{O}_{R/A}} \Omega^*_{R/A,{\rm{cris}}})   \ar[d] \ar[r]^-{\simeq} &  
 {\rm{Tot}}^{\oplus}(\Gamma(K^{\bullet}, \mathscr{F} \otimes_{\mathscr{O}_{R/A}} \Omega^*_{R/A, {\rm{cris}}}))  \ar[d] &
  E^{\ast,0} \ar[d] \ar[l]_-{{\rm{qism}}}^-{\rm{edge}}
  \\
 {\rm{R}}\Gamma(\mathscr{C}_{R/A}/h_{\widehat{D}}, \mathscr{F} \otimes_{\mathscr{O}_{R/A}} \Omega^*_{R/A,{\rm{cris}}}) \ar[r]_-{\simeq} & 
 {\rm{Tot}}^{\oplus}(\Gamma(K^{\bullet}|_{h_{\widehat{D}}}, \mathscr{F} \otimes_{\mathscr{O}_{R/A}} \Omega^*_{R/A, {\rm{cris}}}))  & 
 E^{\ast,1} \ar[l]^-{{\rm{qism}}}_-{\rm{edge}}
}
$$
(whose left part commutes in the derived category and right part commutes as a diagram of complexes); the right side of the latter diagram 
is the map along the right side of (\ref{bdiagram}) by the definitions.

The commutativity of (\ref{bdiagram}) reduces the quasi-isomorphism property for $b'_{\mathscr{F}}$ to the same for $f^*$.
But ${\rm{H}}^i(f^*)$ is exactly the map $g^i:{\rm{H}}^i(E^{*,0})\to {\rm{H}}^i(E^{*,1})$ induced by the inclusion 
$P \to P(1)$ along the second tensor factor, and that map (as well as the one resting on inclusion along the other tensor factor)
was shown to be an isomorphism in \S\ref{rowsec}. 
This finishes the proof of Theorem \ref{hartshornethm}. 

\medskip

By using higher homotopical methods, Theorem \ref{hartshornethm} can be upgraded to a more
localized formulation (which recovers \cite[(7.1.2)]{bogus} when $S$ is a $\mathbf{Z}/(p^m)$-scheme
and $\mathscr{I}_0 = p\mathscr{O}_S$ equipped with its usual divided powers):

\begin{proposition}\label{localmain} In the setting of Theorem $\ref{hartshornethm}$ with $S = \Spec(A)$, 
$${\rm{R}}u_{X/S,\ast}(\mathscr{F}) \simeq \mathscr{F} \widehat{\otimes}_{\mathscr{O}^{\wedge}_{D_Y(X)}} \widehat{\Omega}^*_{D_Y(X)^{\wedge}/S,\delta}$$
in the derived $\infty$-category $D(X,A)$ of sheaves of $A$-modules on $X_{\rm{Zar}}$.
In particular, the right side depends functorially only on $X$ $($not on $Y$$)$. 
\end{proposition}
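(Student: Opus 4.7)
The plan is to upgrade Theorem \ref{hartshornethm} to a sheaf-level assertion by applying it to every open $U \subset X$ in a suitable basis and assembling the resulting global-section isomorphisms via descent in the derived $\infty$-category $D(X,A)$. I observe first that both sides of the claimed isomorphism are already naturally objects of $D(X,A)$: the left side by definition of ${\rm R}u_{X/S,*}$, and the right side as a termwise inverse limit of complexes of quasi-coherent $\mathscr{O}_{D_Y(X)_n}$-modules whose transition maps are surjective by the crystal property of $\mathscr{F}$. The zig-zag constructing the map $\alpha$ in Theorem \ref{hartshornethm}---namely (\ref{dagger}), (\ref{2dagger}), (\ref{3dagger})---is obtained by applying ${\rm R}\Gamma(X_{\rm Zar},-)$ to a zig-zag of morphisms at the sheaf level: (\ref{dagger}) comes from a sheaf-level quasi-isomorphism ${\rm R}u_{X/S,*}(\mathscr{F}) \simeq {\rm R}u_{X/S,*}(\mathscr{F}\otimes\Omega^*_{X/S,{\rm cris}})$ arising from the sheafified form of Corollary \ref{HFOmega}; (\ref{2dagger}) comes from the sheaf-level pullback along $h_{\widehat{D}} \to \ast$; and (\ref{3dagger}) reduces to the termwise inverse limit by the lemma proved in \S\ref{globalproof}. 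Thus there is a natural zig-zag in $D(X,A)$
$${\rm R}u_{X/S,*}(\mathscr{F}) \simeq {\rm R}u_{X/S,*}(\mathscr{F}\otimes\Omega^*_{X/S,{\rm cris}}) \longrightarrow \mathscr{F}\widehat{\otimes}\widehat{\Omega}^*_{D_Y(X)^\wedge/S,\delta}.$$

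Next, to see that the rightward map is a quasi-isomorphism, it suffices to evaluate on a cofinal basis of affine opens $U = \Spec(R) \subset X$ that are contained in an affine open $V = \Spec(P) \subset Y$ with $P$ admitting \'etale coordinates over some affine open of $S$. Such $U$ form a basis: given any affine $U \subset X$, the closed subset $X \setminus U$ is closed in $Y$, so $V' := Y \setminus (X \setminus U)$ is open in $Y$ with $V' \cap X = U$, and after shrinking $V'$ inside an \'etale-coordinate chart of $Y$ we obtain the desired $V$. Since divided power envelopes and pd-adic completion commute with restriction to Zariski opens, $D_V(U) = D_Y(X)|_V$, so the restriction of the right side to $U$ is $\mathscr{F}|_U\widehat{\otimes}\widehat{\Omega}^*_{D_V(U)^\wedge/S,\delta}$. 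Applying Theorem \ref{hartshornethm} (with $S=\Spec(A)$; see Remark \ref{DXA}) to $U \hookrightarrow V \to \Spec(A)$ then identifies ${\rm R}\Gamma(U,-)$ of both sides as isomorphic in $D(A)$.

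The main obstacle is naturality: one must verify that the sheaf-level zig-zag above, when evaluated on ${\rm R}\Gamma(U,-)$, coincides (up to the same sign conventions discussed in \S\ref{globalproof}) with the isomorphism produced by Theorem \ref{hartshornethm} applied to the triple $(U, V, \Spec(A))$, compatibly with further restriction $U' \subset U$. This reduces to tracking the functoriality of the construction of $\alpha$ under morphisms of triples $(U,V,S) \to (X,Y,S)$, which is built into the argument because each step of the zig-zag (the truncation against the de Rham complex, the pullback along $h_{\widehat{D}}$, and the identification of ${\rm R}\varprojlim$ with termwise $\varprojlim$ via the lemma in \S\ref{globalproof}) is manifestly compatible with restriction to Zariski opens in $X$ and $Y$. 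Granting this naturality, hyperdescent for $D(X,A)$ on $X_{\rm Zar}$ promotes the basewise quasi-isomorphisms to a sheaf-level quasi-isomorphism, yielding the desired isomorphism in $D(X,A)$. The final assertion, that the right side depends functorially only on $X$, follows at once since the left side ${\rm R}u_{X/S,*}(\mathscr{F})$ makes no reference to the ambient smooth scheme $Y$.
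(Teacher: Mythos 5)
Your proposal is correct in substance and rests on the same two pillars as the paper's proof: the comparison of Theorem \ref{hartshornethm} applied to opens $U\subset X$ sitting inside opens $V\subset Y$ with $V\cap X=U$ (using $D_Y(X)^{\wedge}|_U=D_V(U)^{\wedge}$ and the $A$-linear refinement of Remark \ref{DXA}), compatibility with restriction $U'\subset U$, and descent for $D(X,A)$ on $X_{\rm Zar}$. The packaging differs, though. The paper never sheafifies the zig-zag defining $\alpha$: it simply records the compatible family of isomorphisms ${\rm R}\Gamma(U,{\rm R}u_{X/S,*}\mathscr{F})\simeq{\rm R}\Gamma(U,\mathscr{F}\widehat{\otimes}\widehat{\Omega}^*_{D_Y(X)^{\wedge}/S,\delta})$ for \emph{all} opens $U$ (any $U$ admits such a $V$, e.g.\ $V=Y\setminus(X\setminus U)$) and invokes Lurie's equivalence between $D(X,A)$ and $D(A)$-valued hypersheaves on $X_{\rm Zar}$ to produce the isomorphism in $D(X,A)$ directly from that family. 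You instead promote (\ref{dagger}), (\ref{2dagger}), (\ref{3dagger}) to a sheaf-level zig-zag and then verify the resulting map is an equivalence on a basis of affine opens; this trades the (homotopy-coherent) gluing of section-level isomorphisms for the extra verification that the construction of $\alpha$ really does sheafify and that its sections over $U$ recover the $(U,V)$-instance of the theorem --- which is exactly the naturality-under-restriction input the paper also leans on, and which you correctly flag as the main point to check (together with boundedness below, which justifies testing an equivalence on a basis). Both routes are legitimate; the paper's is lighter on sheaf-level constructions, yours makes the local-to-global step a routine quasi-isomorphism check once the sheafified zig-zag is in hand.
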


\begin{proof}
For each open $U \subset X$ and open $V \subset Y$ satisfying $V \cap X = U$, we have
$D_Y(X)^{\wedge}|_U = D_V(U)^{\wedge}$.  In this way, 
Theorem \ref{hartshornethm} applied to $U \to S$ and $U \hookrightarrow V$ yields an isomorphism in $D(A)$ between
$${\rm{R}}\Gamma((U/S)_{\rm{cris}}, \mathscr{F}) = {\rm{R}}\Gamma(U_{\rm{Zar}}, {\rm{R}}u_{X/S,\ast}(\mathscr{F}))$$
and 
$${\rm{R}}\Gamma(U_{\rm{Zar}},  \mathscr{F} \widehat{\otimes}_{\mathscr{O}^{\wedge}_{D_Y(X)}} \widehat{\Omega}^*_{D_Y(X)^{\wedge}/S,\delta})$$
compatibly with restriction along open immersions $U' \subset U$ (see Remark \ref{DXA}).

By \cite[Cor.\,2.1.2.3]{lurie}, there is an equivalence 
from the derived $\infty$-category $D(X,A)$ to the derived $\infty$-category of $D(A)$-valued hypersheaves on $X_{\rm{Zar}}$
by assigning to each $K^{\bullet} \in D(X,A)$ the hypersheaf $U \mapsto {\rm{R}}\Gamma(U, K^{\bullet})$.
In particular, for any $K^{\bullet}$ and ${K'}^{\bullet}$, a collection of compatible isomorphisms
${\rm{R}}\Gamma(U, K^{\bullet}) \simeq {\rm{R}}\Gamma(U, {K'}^{\bullet})$ in $D(A)$
for open $U \subset X$ arises from a unique isomorphism
$K^{\bullet} \simeq {K'}^{\bullet}$ in $D(X,A)$.
Thus, we obtain the desired isomorphism in $D(X,A)$.
\end{proof}

\begin{remark}\label{genS}
Proposition \ref{localmain} can be formulated and established
for non-affine $S$ by working with the derived $\infty$-category of sheaves of $h^{-1}\mathscr{O}_S$-modules
on $X_{\rm{Zar}}$, where $h:X \to S$ is the structure map. 
\end{remark}

\begin{remark}\label{Fbullet}
There is another way to describe 
$\mathscr{F} \widehat{\otimes}_{\mathscr{O}^{\wedge}_{D_Y(X)}} \widehat{\Omega}^*_{D_Y(X)^{\wedge}/S,\delta}$
in $D(X,A)$ without differential forms, via globalizing constructions in the proof of the affine result in Theorem \ref{dPoincare}. 
Let's explain how this goes, assuming for simplicity that $Y$ is separated over $\Spec(A)$.

For $\nu \ge 0$, let $D(\nu)_n = D_{Y^{\nu+1}}(X)_n \subset D_{Y^{\nu+1}}(X)$ be the closed subscheme
defined by the vanishing of $n! \mathscr{J}(\nu)^{[n]}$,
where $\mathscr{J}(\nu)$ is the ideal defining the closed immersion $X \hookrightarrow Y \hookrightarrow Y^{\nu+1}$
(fiber product over $S$), and let $\gamma(\nu)$ be the divided powers on $\mathscr{J}(\nu)_D \subset \mathscr{O}_{D_{Y^{\nu+1}}(X)}$ compatible with $\eta$. 
The scheme $D(\nu)_n$ is a globalization of $\Spec(D_{P(\nu),\eta}(J(\nu))/n!J(\nu)^{[n]})$ 
from our earlier work in the affine setting.
Since $X \hookrightarrow D(\nu)_n$ is a closed immersion with the same underlying topological space,
we may and do view each quasi-coherent sheaf
$\mathscr{F}(\nu)_n := \mathscr{F}_{(X, D(\nu)_n, \gamma(\nu))}$ on $(D(\nu)_n)_{\rm{Zar}}$ as a sheaf of $A$-modules
on $X_{\rm{Zar}}$. 
Hence, $$\widehat{\mathscr{F}}(\nu) := \varprojlim_{n \ge 1} \mathscr{F}(\nu)_n$$
makes sense as a sheaf of $A$-modules on $X_{\rm{Zar}}$, as does
$$\mathscr{E}^{\mu,\nu} := \varprojlim_{n \ge 1} \mathscr{F}(\nu)_n \otimes_{\mathscr{O}_{D(\nu)_n}} \Omega^{\mu}_{D(\nu)_n/S, \gamma(\nu)}.$$

The sheaves $\mathscr{E}^{\mu,\nu}$ are a globalization of $E^{\mu,\nu}$ from the proof of Theorem \ref{dPoincare}.
In particular, we organize the $\mathscr{E}^{\mu,\nu}$'s into a double complex 
of sheaves of $A$-modules on $X_{\rm{Zar}}$ in exactly the same
way that we did for the $E^{\mu,\nu}$'s (using that $\mathscr{F}$ is a quasi-coherent crystal for defining the differentials in
the $\mu$-directions). 
Clearly $\mathscr{E}^{0,\ast}$ is the evident complex 
\begin{equation}\label{Fhat}
\widehat{\mathscr{F}}(\ast) := (\widehat{\mathscr{F}}(0) \to \widehat{\mathscr{F}}(1) \to \widehat{\mathscr{F}}(2) \to \dots)
\end{equation}
(globalizing the \v{C}ech-Alexander complex ${\rm{\mbox{\v{C}}A}}(P \twoheadrightarrow R, \mathscr{F})$ in the affine setting).
Also, $\mathscr{E}^{\ast,0}$ is exactly the definition of
$\mathscr{F} \widehat{\otimes}_{\mathscr{O}^{\wedge}_{D_Y(X)}} \widehat{\Omega}^*_{D_Y(X)^{\wedge}/S,\delta}$.

Consider the augmentation maps from those edges:
$$\mathscr{E}^{\ast,0} \to {\rm{Tot}}(\mathscr{E}^{\ast,\ast}) \leftarrow \mathscr{E}^{0,\ast}.$$
We claim these are quasi-isomorphisms, so in $D(X,A)$ this gives an isomorphism
$$\mathscr{F} \widehat{\otimes}_{\mathscr{O}^{\wedge}_{D_Y(X)}} \widehat{\Omega}^*_{D_Y(X)^{\wedge}/S,\delta} \simeq
\widehat{\mathscr{F}}(\ast),$$
where the right side doesn't involve differential forms (though clearly it involves $X \hookrightarrow Y$ over $A$).

Our quasi-isomorphism problem is a Zariski-local problem on $X$, so to check it we may reduce
to the case when $X = \Spec(R)$ is affine and $Y = \Spec(P)$ is affine 
such that $P$ admits \'etale coordinates over $A$. It suffices to prove
the quasi-isomorphism property at the level of sections over {\em all} open affines in $X$ having the form $V \cap X$ for affine
open $V \subset \Spec(P)$, 
 and these are all instances of the quasi-isomorphism results given by Theorem \ref{dPoincare}. (In the affine setting we had homotopy equivalences and not just quasi-isomorphisms, but that doesn't matter here.) 
\end{remark}

\begin{remark}\label{dRcomp}
Let $W$ be a complete discrete valuation ring with mixed characteristic $(0,p)$
and uniformizer $p$, so $I_0 := pW$ has unique divided powers. 
For a scheme $Y_0$ over the residue field $k=W/(p)$,  ${\rm{Cris}}(Y_0/W)$
coincides with the site ${\rm{Cris}}(Y_0/{\rm{Spf}}(W))$ as defined in \cite[7.17ff.]{bogus}.  
Hence, our ${\rm{H}}^i((Y_0/W)_{\rm{cris}}, \mathscr{O}_{Y_0/W})$
is the same as ${\rm{H}}^i_{\rm{cris}}(Y_0)$ in \cite[(7.26.1)]{bogus}.
This has reasonable finiteness properties over $W$ when $Y_0$ is smooth
and proper over $k$, as shown in \cite[\S7]{bogus}.  

We get a new perspective
via Theorem \ref{hartshornethm} when there is a smooth proper lift
$Y$ of $Y_0$ over $\Spec(W)$, as follows.
Clearly $\mathscr{D}_Y(Y_0) = \mathscr{O}_Y$ and the ideal $\mathscr{J} = p \mathscr{O}_Y$ 
of $Y_0$ in $\mathscr{O}_Y$ satisfies
$n! \mathscr{J}^{[n]} = n! p^{[n]} \mathscr{O}_Y = (p \mathscr{O}_Y)^n$.  Hence, pd-adic completion for
an $\mathscr{O}_Y$-module is the same thing as $p$-adic completion.   
Applying Theorem \ref{hartshornethm} to $\mathscr{F} = \mathscr{O}_{Y_0/W}$ yields 
$${\rm{R}}\Gamma((Y_0/W)_{\rm{cris}}, \mathscr{O}_{Y_0/W}) \simeq
{\rm{R}}\Gamma(\widehat{Y}, \widehat{\Omega}^*_{Y/W}).$$
Passing to $i$th homologies, we get 
\begin{equation}\label{crisdR}
{\rm{H}}^i((Y_0/W)_{\rm{cris}}, \mathscr{O}_{Y_0/W}) \simeq
\mathbf{H}^i(\widehat{Y}, \widehat{\Omega}^*_{\widehat{Y}/W}).
\end{equation}
The main difference with the approach in \cite[\S7]{bogus} is that
the proof of (\ref{crisdR}) in \cite[7.26.3]{bogus}
involves a comparison isomorphism over each $W/(p^n)$,
whereas the proof via Theorem \ref{hartshornethm} 
involves a direct comparison to cohomology on the formal scheme $\widehat{Y}$.
\end{remark}

\end{document}